\newtheorem{thm}{Theorem}[section] 
\newtheorem{cor}[thm]{Corollary}
\newtheorem{prop}[thm]{Proposition}
\newtheorem{lem}[thm]{Lemma}
\theoremstyle{definition} 
\newtheorem{defn}[thm]{Definition}
\newtheorem{eg}[thm]{Example} 
\theoremstyle{remark}
\newtheorem{rem}[thm]{Remark}
\newtheorem{notation}[thm]{Notation}
\newtheorem{step}{Step}[thm]
\newtheorem*{ack}{Acknowledgements}
\DeclareMathOperator{\coker}{coker}
\DeclareMathOperator{\imSat}{imSat}
\DeclareMathOperator{\Tr}{Tr}
\DeclareMathOperator{\Spec}{Spec}
\DeclareMathOperator{\Isom}{Isom}
\newcommand{\factor}[2]{\left. \raise 2pt\hbox{\ensuremath{#1}} \right/
        \hskip -2pt\raise -2pt\hbox{\ensuremath{#2}}}
\newcommand{\factormiddle}[2]{\left. \raise 4pt\hbox{\ensuremath{#1}} \right/
        \hskip -2pt\raise -4pt\hbox{\ensuremath{#2}}}
\newcommand{\factorbig}[2]{\left. \raise 6pt\hbox{\ensuremath{#1}} \right/
        \hskip -2pt\raise -6pt\hbox{\ensuremath{#2}}}
\newcommand{\factorBig}[2]{\left. \raise 8pt\hbox{\ensuremath{#1}} \right/
        \hskip -2pt\raise -8pt\hbox{\ensuremath{#2}}}
\newcommand{\sF}{\mathcal{F}}
\newcommand{\sG}{\mathcal{G}}
\newcommand{\sH}{\mathcal{H}}
\newcommand{\sO}{\mathcal{O}}
\newcommand{\sT}{\mathcal{T}}
\newcommand{\bQ}{\mathbb{Q}}
\title{The Demailly--Peternell--Schneider conjecture is true in positive characteristic}
\author{Sho Ejiri}
\address{Department of Mathematics, Graduate School of Science, Osaka Metropolitan University, Osaka City, Osaka 558-8585, Japan}
\email{shoejiri.math@gmail.com}
\author{Zsolt Patakfalvi}
\address{EPFL SB MATH CAG 
MA C3 635 
Station 8, 
CH-1015, Lausanne}
\email{zsolt.patakfalvi@epfl.ch}
\begin{document}
\maketitle
\markboth{SHO EJIRI AND ZSOLT PATAKFALVI}{The Demailly--Peternell--Schneider conjecture is true in positive characteristic}
\begin{abstract}
We prove the Demailly--Peternell--Schneider conjecture in positive characteristic: if $X$ is a smooth projective  variety over an algebraically closed field of characteristic $p>0$ with  $-K_X$ is nef, then the  Albanese morphism $a: X \to A$ is surjective. We also show strengthenings either allowing mild singularities for $X$, or proving more special properties of $a$.

The above statement for compact K\"ahler manifolds was conjectured originally by Demailly, Peternell and Schneider in 1993, and for smooth projective varieties of characteristic  zero it was shown by Zhang in 1996. In positive characteristic, all earlier results involved tameness assumptions either on cohomology or on the singularities of the general fibers of $a$. The main feature of the present article is the development of a technology  to avoid such assumptions.   
\end{abstract}
\section{Introduction}
\emph{Throughout this paper, we work over an algebraically closed field $k$ of positive characteristic.} 

There has been many works in multiple branches of geometry saying that a ``uniformly non-hyperbolic'' compact geometric space $X$ has very special Albanese morphism $a : X \to A$. Here, the word ``uniformly'' is a synonym of  ``in all directions''. 

The starting points of such statements are the following  basic principles:
\begin{itemize}
\item images of ``uniformly non-hyperbolic'' spaces are ``non-hyperbolic'', and 
\item subspaces of abelian varieties are ``hyperbolic'', unless they are abelian sub-varieties (or equivalently linear, in the analytic use of language). 
\end{itemize}
For example, using that $a(X)$ is not an abelian subvariety of $A$, the above two basic principles imply the contradiction that $a(X)$ is both hyperbolic and non-hyperbolic at the same time, unless $a : X \to A$ is surjective.  

To turn the above general philosophy on the special structure of the Albanese morphism $a: X \to A$ into a precise theorem or conjecture, one has to fix:
\begin{itemize}
\item the branch of geometry (the citation are only examples of an enormous literature on the subject): most typically this would be complex geometry with all different degrees of analytic or K\"ahler flavor  \cite{Bog74,Kaw81,Bea83,CPZ03,Dem15,Cao15,CH17,Mat22}, or characteristic zero 
\cite{Zha96,Zha05,CH01,LTZZ,Wan22}  or positive characteristic algebraic geometry \cite{HP16a,HPZ19,Eji19p,Eji19w,Wan22,BJ23}, and the theory of foliations also has variants \cite{Dru21},
\item the meaning of ``uniformly non-hyperbolic'' (again only examples are cited): this could mean a positive curvature condition \cite{Dem15}, $K_X \equiv 0$ \cite{Bea83}, $-K_X$ is nef \cite{Dem15,Eji19w}, $\kappa(X)\leq 0$, $\kappa_S(X) \leq 0$ \cite{HPZ19}, $\kappa(X)=0$ \cite{CH01}, or $\kappa_S(X)=0$ \cite{HP16a}. 
\item the structure one asks/shows for $a: X \to A$: typically to be as close as one can get to a locally trivial fiber bundle, which then might need to be relaxed based on the setting. For example one might need to allow a birational transformation. Or, sometimes the best one can show is that $a$ is surjective. 
\end{itemize}
Our main theorem focuses on the positive characteristic case, where a special issue comes up: the general fiber of $a$ can be very singular. Its singularities are homeomorphic to a smooth point, but the algebraic structure can be very different compared to a smooth point. This breaks down most techniques, and hence all past results were assuming the existence of general fibers with good singularities, e.g., \cite{Eji19p,Eji19w}, or some ordinarity assumption, which secretely also assumes good general fibers \cite{HP16a,HPZ19,BJ23}.

\subsection{Smooth case of our main result.}
\label{sec:smooth_case}

Demailly, Peternell and Schneider  conjectured that a compact K\"ahler manifold $X$ with $-K_X$ nef has surjective Albanese \cite[Conj 2]{DPT93}. 
Our main result is the positive characteristic version of this conjecture. 
In particular, our main theorem does not involve any kind of assumptions explained in the previous paragraph, which is especially hard to attain in positive characteristic.

\begin{thm}[\scshape Smooth case of Theorem \ref{thm:main}] \label{thm:main_smooth} 
If $X$ is a smooth projective variety with $-K_X$ nef, then the Albanese morphism $a :X \to A $ is surjective.
\end{thm}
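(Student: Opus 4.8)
The plan is to argue by contradiction, using the geometry of subvarieties of abelian varieties to reduce to a fibration over a base of general type, and then to play the semipositivity forced by nefness of $-K_X$ against the bigness of the canonical class of that base. So suppose $a$ is not surjective and put $Y = a(X) \subsetneq A$. With the usual base point $Y$ contains the origin and lies in no proper abelian subvariety, so Ueno's structure theorem for subvarieties of abelian varieties produces a quotient $A \twoheadrightarrow A'$ for which the image $Z$ of $Y$ in $A'$ is of general type with $\dim Z \geq 1$; let $g : X \to Z$ be the induced morphism. If $\dim Z = \dim X$, then $X$ carries a dominant generically finite rational map to a variety of general type, so $\kappa(X) = \dim X$; but then $K_X \cdot H^{n-1} > 0$ for $H$ ample, contradicting $-K_X \cdot H^{n-1} \geq 0$. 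Hence $\dim Z < \dim X$, and after Stein factorization $X \to W \to Z$ (and routine cleanup: normalizing, and working over the smooth locus of $W$, over which the general fibre lies — one cannot simply resolve $W$ and $X$, since modifying $X$ destroys nefness of $-K_X$) we are reduced to: a fibration $f : X \to W$ onto a smooth variety of general type with $1 \leq \dim W < \dim X$ and $-K_X$ nef; the task is to contradict this.

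\emph{Trivial-canonical fibres.} Let $F$ be a general fibre; then $-K_F = -K_X|_F$ is nef, so $\kappa(F) \leq 0$ (else $mK_F \cdot H^{n-1} > 0$ for some $m$). Suppose first that $K_F$ is numerically trivial. Then for a suitable $m$ the rank-one torsion-free sheaf $f_*\omega_{X/W}^{\otimes m}$ is non-zero, and by weak positivity of relative pluricanonical direct images it is pseudoeffective; comparing $\omega_{X/W}^{\otimes m}$ with the pullback of this sheaf gives $mK_{X/W} \sim f^*N + E$ with $N$ pseudoeffective on $W$ and $E$ effective and $f$-vertical. Thus $mK_X \sim f^*(N + mK_W) + E$, where $N + mK_W$ is big because $K_W$ is big; intersecting with a general complete-intersection curve avoiding $f(E)$ yields $K_X \cdot H^{n-1} > 0$, contradicting nefness of $-K_X$.

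\emph{Fano-type fibres.} If instead $-K_F$ is nef but not numerically trivial, then $K_F$ is not pseudoeffective, $\kappa(F) = -\infty$, $F$ is uniruled, and so is $X$. Pass to the maximal rationally connected fibration $X \dashrightarrow V$: as $W$ is finite over a subvariety of an abelian variety it carries no rational curves, so the rationally connected fibres of $X \dashrightarrow V$ are $f$-contracted and $f$ factors as $X \dashrightarrow V \to W$, with $V$ non-uniruled and $\dim W \leq \dim V < \dim X$. If $V$ is a point then $X$ is rationally connected, $A = 0$, and $a$ is surjective — contrary to assumption. Otherwise $V \to W$ surjects onto the general-type base $W$ of positive dimension; $-K_X$ nef forces $-K_V$ numerically trivial, hence $\kappa(V) \leq 0$; yet a general fibre $G$ of $V \to W$ has $K_G \equiv 0$, hence (by abundance-type input) $\kappa(G) \geq 0$, so the Iitaka conjecture for a base of general type (known in that case) gives $\kappa(V) \geq \kappa(G) + \dim W \geq 1$, a contradiction.

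\emph{The main obstacle.} Everything substantive is in the passage to positive characteristic, where several ingredients above are not classically available: Viehweg-type weak positivity of $f_*\omega_{X/W}^{\otimes m}$ fails without tameness of the fibre cohomology, ``$K$ not pseudoeffective $\Rightarrow$ uniruled'' and the behaviour of the maximal rationally connected fibration under $-K_X$ nef are delicate, and — most essentially — the general fibres of $f$, though topologically smooth, can be algebraically wild (e.g.\ non-normal or non-reduced after base change, far from being $F$-pure). The core of the paper, its advertised technology, is to secure the positivity of $f_*\omega_{X/W}^{\otimes m}$ and to handle the Fano-fibre case with no tameness hypothesis on cohomology and no hypothesis on the singularities of the fibres, thereby letting the classical skeleton above run unconditionally in characteristic $p$.
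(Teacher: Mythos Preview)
Your proposal is not a proof: it is a characteristic-zero outline followed by a deferral of the actual content to ``the paper's technology,'' and that deferral mischaracterizes what the paper actually provides. The tools you invoke in the final paragraph---weak positivity of $f_*\omega_{X/W}^{\otimes m}$ over an arbitrary general-type base, ``$K_X$ not pseudo-effective $\Rightarrow$ uniruled'' (BDPP), well-behaved MRC fibrations with $-K_V$ numerically trivial, abundance for the fibre $G$, and the Iitaka inequality over a general-type base---are all genuinely open or false in positive characteristic without tameness hypotheses, and the paper does \emph{not} supply them. In particular the paper does not secure positivity of pluricanonical direct images for arbitrary fibres, and it makes no use whatsoever of Ueno's reduction to a general-type base, of a Fano/trivial-fibre dichotomy, or of the MRC fibration.

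The paper's route is structurally different. It takes $Y$ to be the normalization of $a(X)$ and proves directly (Theorem~\ref{thm:weak pos}) that $\sO_{U_0}(-h^*K_V)$ is weakly positive, i.e.\ that $-K_Y$ is pseudo-effective; the argument uses a non-zero ``magic map'' $m_e$ built from Frobenius traces, a foliation computation to control $K_{W_e/U}$, and an iterative limiting argument fed by the semipositivity result Proposition~\ref{prop:genglgen} (which only needs $F$-purity of the \emph{schematic} generic fibre, automatic here since $X$ is smooth). Once $\omega_Y^*$ is weakly positive, Proposition~\ref{prop:main} shows that a normal projective variety of maximal Albanese dimension with weakly positive anti-canonical is an abelian variety, and surjectivity follows. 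None of this plugs into your skeleton; the positivity statement proved is about $-K_Y$, not about $f_*\omega_{X/W}^{\otimes m}$, and there is no general-type base in sight.
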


 Theorem \ref{thm:main_smooth} was  shown in characteristic zero by Zhang in \cite[Thm 1]{Zha96}. Hence, Theorem \ref{thm:main_smooth} now holds over any algebraically closed field. In fact, using the compatibility of the Albanese morphism with base-change, it even holds over non-closed fields.  
 
 For further references on earlier results, in positive characteristic, in characteristic zero, and also in the analytic setting we refer to the part of the Introduction preceding Section \ref{sec:smooth_case}.
 
 Theorem \ref{thm:main_smooth} also yields a corollary about the characterization of abelian varieties:
 
 \begin{cor}
 \label{cor:characterization_abelian}
 Let $X$ be a smooth projective variety with $-K_X$ nef, and set $b_1= \dim_{\bQ_l} H^1_{\textrm{\'et}} (X, \bQ_l)$.  Then:
 \begin{enumerate}
 \item \label{itm:characterization_abelian:inequality} $b_1 \leq 2 \dim X$, and 
 \item \label{itm:characterization_abelian:characterization} $b_1 = 2 \dim X$ if and only if $X$ is an abelian variety. 
 \end{enumerate}
 \end{cor}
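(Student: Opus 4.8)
The plan is to reduce everything to Theorem~\ref{thm:main_smooth} together with the standard identity $b_1 = 2\dim A$, valid for every smooth projective $X$ with $A = \mathrm{Alb}(X)$: $H^1_{\textrm{\'et}}(X,\bQ_l)$ is the $\bQ_l$-Tate module of $\mathrm{Pic}^0_X$, which agrees with that of its reduction (the Tate module does not see the possible non-reducedness of the Picard scheme), so $b_1 = 2\dim\mathrm{Pic}^0_{X,\mathrm{red}} = 2\dim A$. Granting this, \ref{itm:characterization_abelian:inequality} is immediate: by Theorem~\ref{thm:main_smooth} the morphism $a$ is surjective, so $\dim A\le\dim X$. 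For the ``if'' part of \ref{itm:characterization_abelian:characterization}, if $X$ is abelian then the universal property of the Albanese (applied to the identity $X\to X$) produces an inverse to $a\colon X\to\mathrm{Alb}(X)$, so $a$ is an isomorphism and $b_1 = 2\dim X$.

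For the ``only if'' part, assume $b_1 = 2\dim X$, so $\dim A = \dim X =: n$; together with Theorem~\ref{thm:main_smooth} this says $a$ is surjective and generically finite, and the goal is to upgrade this to: $a$ is an isomorphism (hence $X\cong A$ is abelian). The first move I would make is to reduce to the case that $a$ is \emph{finite}. Form the Stein factorization $X\xrightarrow{\pi}\bar A\xrightarrow{g} A$, with $\bar A = \Spec_A(a_*\sO_X)$ normal (since $X$ is normal), $g$ finite, and $\pi$ proper with connected fibres; generic finiteness of $a$ forces $\pi$ to be birational. If $\pi$ contracted a curve $C$, then $C$ would be contracted by $a$, so $a^*H\cdot C = 0$ for $H$ ample on $A$ while $a^*H$ is big and nef (as $(a^*H)^n = (\deg a)H^n>0$); thus $-K_X$ would be numerically trivial on $C$. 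Ruling this out --- equivalently, controlling the fibre dimension of $a$ using only that $-K_X$ is nef (together with $\kappa(X)\le 0$, which follows) --- is, I expect, the main obstacle, and it is exactly the sort of positive-characteristic difficulty this paper is built to handle; alternatively it should follow from the finer structural conclusions of Theorem~\ref{thm:main}.

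Once $a$ is finite the rest should be robust, and crucially insensitive to inseparability. As $X$ and $A$ are smooth, the finite morphism $a$ is flat and Gorenstein, so it carries a relative dualizing line bundle $\omega_{X/A}\cong\sO_X(\mathfrak d)$ with $\mathfrak d\ge 0$ the different divisor of $a$, vanishing precisely on the \'etale locus; since $\omega_A\cong\sO_A$ this gives $K_X\sim\mathfrak d\ge 0$. Because $-K_X$ is nef and $\mathfrak d$ is effective, intersecting with $H^{n-1}$ for $H$ ample on $X$ forces $\mathfrak d\cdot H^{n-1} = 0$, hence $\mathfrak d = 0$; so $a$ is finite \'etale. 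A connected finite \'etale cover of an abelian variety is again an abelian variety, so $X$ is abelian, and then $a$ is an isomorphism as in the ``if'' part, which proves \ref{itm:characterization_abelian:characterization}. Note that this last paragraph never uses separability of $a$: working with the different of the finite Gorenstein morphism $a$, rather than with a ramification divisor, is what makes the argument valid in characteristic $p$.
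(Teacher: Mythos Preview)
Your final paragraph contains a genuine error. You claim that for a finite morphism $a\colon X\to A$ between smooth varieties one has $\omega_{X/A}\cong\sO_X(\mathfrak d)$ with $\mathfrak d\ge 0$ supported exactly on the non-\'etale locus, and that this is ``crucially insensitive to inseparability''. It is not: take $a=F_A\colon A\to A$ the absolute Frobenius of an abelian variety. Then $\omega_{A/A}\cong\omega_A\otimes F_A^*\omega_A^{-1}\cong\sO_A$ is trivial, so your $\mathfrak d$ vanishes, yet $F_A$ is nowhere \'etale. The identification of the support of the different with the non-\'etale locus requires $a$ to be generically \'etale, i.e.\ separable. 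So from $\mathfrak d=0$ you cannot conclude that $a$ is \'etale without first establishing separability; this is precisely a positive-characteristic subtlety, not something the different sidesteps.

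The paper's proof is shorter and avoids this issue entirely. Once $b_1=2\dim X$ gives $\dim A=\dim X$, so that $a$ is surjective and generically finite, one applies Proposition~\ref{prop:main} directly to $X$: since $-K_X$ is nef, $\omega_X^*$ is weakly positive, and $X$ has maximal Albanese dimension, so $X$ is an abelian variety. The separability problem is handled inside Proposition~\ref{prop:main} via Proposition~\ref{prop:HPZ}, which uses only that $-K_X$ nef forces $h^0(\omega_X^{\otimes n})\le 1$ for all $n\ge 1$.

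Your route can be salvaged, but it then essentially reconstructs pieces of Proposition~\ref{prop:main}. For the step you flagged as ``the main obstacle'' (reducing to $a$ finite), Theorem~\ref{thm:main}\eqref{itm:main:fiber_space} applies because the first map in the Stein factorization is birational, hence separable, giving that $a$ is an algebraic fiber space and therefore birational; then Theorem~\ref{thm:main}\eqref{itm:main:a_exc} together with purity of the exceptional locus of a birational morphism from a smooth source forces $a$ to be an isomorphism---and you are already done, without ever invoking the different. Alternatively, if you want to keep the different argument, first invoke Proposition~\ref{prop:HPZ} to get separability of $a$; then the ramification-divisor computation is valid.
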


\subsection{Singularities and the structure of the Albanese}
The general case of our main result is the following. This version  allows singularities of $X$,  a boundary divisor, and it also discusses further structure of the Albanese morphism:

\begin{thm} \label{thm:main}
Let $(X, \Delta)$ be a projective, strongly $F$-regular pair, for which there exists an integer $i>0$ not divisible by $p$ such that $-i(K_X+\Delta)$ is  nef  and Cartier. 
Then the following hold:
\begin{enumerate}
\item {\scshape Surjectivity.} \label{itm:main:Albanese_surjective} The Albanese morphism $a:X\to A$ of $X$ is surjective. 
\item {\scshape Weak flatness.} \label{itm:main:a_exc} There are no $a$-exceptional divisors, that is, for every prime divisor $E$ on $X$, the codimension of $f(\mathrm{Supp}(E))$ is at most one.  
\item {\scshape Fibration up to universal homeomorphism.} \label{itm:main:Stein} Let $a:X\xrightarrow{f} Y \xrightarrow{g} A$ be the Stein factorization of $a$. Then $g$ is purely inseparable. 
\item {\scshape Fibration.} \label{itm:main:fiber_space} If $f$ is separable, then $a$ forms an algebraic fiber space, i.e., $g$ is an isomorphism. 
\end{enumerate}
\end{thm}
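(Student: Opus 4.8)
The plan is to reduce everything to a statement about the positivity of the pushforward sheaves $a_* \omega_{X/A}^{(m)}$ (or rather their $F$-theoretic variants), combined with the theory of generic semipositivity on abelian varieties. First I would recall the setup: after passing to a log resolution one cannot preserve strong $F$-regularity, so instead the strategy must stay on $X$ itself and exploit the $F$-regularity directly. The key input is a ``trace-of-Frobenius'' construction: since $-i(K_X+\Delta)$ is nef and Cartier with $i$ coprime to $p$, one can iterate the Cartier operator to build, for suitable $e$, a map $F^e_* \sO_X(\text{(stuff)} \cdot (K_X+\Delta)) \to \sO_X$ that is surjective (this is where strong $F$-regularity enters, via the splitting-type results). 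Pushing forward to $A$ and using that $-(K_X+\Delta)$ is nef, one obtains that a certain sheaf $\sG$ on $A$ — morally $a_* \sO_X$ twisted by something nef — has a nonzero global section after pulling back by multiplication-by-$n$ maps on $A$, and crucially is \emph{not} destabilized ``downward'' by any quotient.

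For part \eqref{itm:main:Albanese_surjective} (surjectivity), I would argue by contradiction: if $a(X) = Z \subsetneq A$ is a proper subvariety, then $Z$ generates a proper abelian subvariety (by universal property of the Albanese), and one gets a fibration $A \to B$ onto a positive-dimensional abelian variety with $Z$ contained in a fiber. Pulling back by isogenies, $\sO_Z$ (or the relevant $F$-pushforward sheaf) would have to be both ``generically globally generated after isogeny'' (from the $-K_X$ nef input via the trace map) and ``of no sections'' on the generic fiber of $A \to B$ (since $Z$ avoids it), a contradiction. This is essentially Zhang's argument, but the replacement of the Hodge-theoretic / Kawamata-type semipositivity of $a_*\omega_{X/A}$ by an $F$-singularities statement is the whole point, and is presumably where the ``technology to avoid tameness assumptions'' mentioned in the abstract is deployed. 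I expect the hard part to be exactly here: showing that the $F$-pushforward sheaf on $A$ is weakly positive (or nef, or satisfies the relevant ``no negative quotient on an isogeny cover'' condition) \emph{without} any control on the singularities of the general fiber of $a$ — ordinary classical arguments would need the general fiber to be, say, $F$-pure or have good $H^0$ of dualizing sheaves, and sidestepping this is the novel ingredient.

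For part \eqref{itm:main:a_exc} (weak flatness), suppose $E$ is an $a$-exceptional prime divisor. Then $a^* H \cdot E = 0$ for an ample $H$ on $A$ (since $a(E)$ has codimension $\geq 2$), while on the other hand $-(K_X+\Delta)$ nef combined with the positivity of $a_*(\cdots)$ forces every component of every fiber to be ``covered'' by the section produced in \eqref{itm:main:Albanese_surjective}; an exceptional $E$ contributes a vertical divisor along which the trace map degenerates, contradicting the surjectivity of the trace-of-Frobenius map over the generic point of $a(E)$. Concretely I would intersect with a general complete-intersection curve and run a degree count, using that $a^*(\text{nef})$ has zero intersection with $E$ but the Frobenius-trace section is nonvanishing there.

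For parts \eqref{itm:main:Stein} and \eqref{itm:main:fiber_space}, take the Stein factorization $X \xrightarrow{f} Y \xrightarrow{g} A$. Since $a$ is surjective with connected fibers onto $Y$, and $Y \to A$ is finite, I would show $g$ is purely inseparable by observing that $Y$ inherits a movable/nef anticanonical-type class (via adjunction and the fact that $f$ has no exceptional divisors by \eqref{itm:main:a_exc}), so $Y$ again satisfies enough positivity that $q(Y) \geq \dim A = \dim Y$, forcing $Y \to A$ to be the Albanese of $Y$; a finite surjective morphism of abelian varieties that is also an Albanese map must be purely inseparable (its separable part would split off a nontrivial étale cover, contradicting the universal property). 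Finally, if $f$ is separable then $Y$ is smooth in codimension one and $g$ purely inseparable between normal varieties that is an isomorphism in codimension one over a normal (indeed smooth) base is an isomorphism by Zariski's main theorem / purity — so $g$ is an isomorphism and $a$ is a fiber space. The main obstacle throughout remains the positivity statement in \eqref{itm:main:Albanese_surjective}; parts \eqref{itm:main:a_exc}--\eqref{itm:main:fiber_space} are comparatively formal consequences once that engine is built.
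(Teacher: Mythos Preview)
Your proposal identifies the correct general shape --- the heart of the theorem is a positivity statement driven by Frobenius trace maps, and parts \eqref{itm:main:a_exc}--\eqref{itm:main:fiber_space} are downstream of that engine --- but the concrete strategy diverges from the paper's in ways that matter, and part \eqref{itm:main:Albanese_surjective} as stated contains a genuine error.

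The contradiction setup in \eqref{itm:main:Albanese_surjective} does not work: if $Z = a(X)$, then by the universal property of the Albanese morphism $Z$ generates \emph{all} of $A$, not a proper abelian subvariety; so one cannot obtain a fibration $A \to B$ with $Z$ contained in a fiber. The paper's route is different. One takes $Y$ to be the normalization of $a(X)$, and the main technical theorem (Theorem~\ref{thm:weak pos}) shows that the pullback $-h^*K_V$ is weakly positive on a suitable open $U \subseteq X$, which descends to show that $\omega_Y^*$ is weakly positive. A separate self-contained result (Proposition~\ref{prop:main}) then shows that a normal projective variety of maximal Albanese dimension with $\omega_Y^*$ weakly positive is already an abelian variety; this uses that $K_Y = 0$, then Kunz's theorem via a filtration of $F_{Y*}\mathcal{O}_Y$ by degree-zero line bundles to get smoothness, then triviality of $\Omega_Y^1$. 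The point is that the positivity lives on $Y$ (the image), not on $A$, and the conclusion is structural ($Y$ abelian) rather than a section-counting contradiction.

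Your sketch of the positivity input (``trace-of-Frobenius construction'') is also too coarse to capture the new idea. The difficulty is precisely that the general fiber of $X \to Y$ can be arbitrarily bad, so a single $F$-trace map cannot directly produce weak positivity of anything on $Y$. The paper's mechanism is a map $m_e$ comparing $K_{X^e/X}$ with $\gamma_e^* K_{W_e/X}$ through the normalized Frobenius base change $W_e$, combined with a limiting argument (iteratively improving the coefficient of $-h^*K_Y$ by roughly $p^{-r}$ at each step, where $r$ is the exponent at which the base-changed fibration becomes generically geometrically normal) and a foliation computation bounding $\gamma_r^* K_{W_r/U}$ from above by zero. None of this is visible in a one-step Frobenius-trace picture.

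For \eqref{itm:main:a_exc}, an intersection/degree count will not suffice: the paper instead passes to a flattening $X' \to A'$ of $a$, writes $K_{X'} + \Delta' = \sigma^*(K_X+\Delta) + D$ with $D$ effective and $\sigma$-exceptional, and reapplies Theorem~\ref{thm:weak pos} with this $D$ to show that $\mathcal{O}_X(-\sigma_* a'^* K_{A'})$ is weakly positive; since $\sigma_* a'^* K_{A'} \ge E$ this forces $E = 0$. For \eqref{itm:main:Stein}, the paper takes the normalization of $A$ in the separable closure of $K(Y')/K(A)$ and runs the same argument as in \eqref{itm:main:Albanese_surjective} to see that this intermediate variety is abelian, hence equal to $A$; your proposed route via ``$Y$ inherits a nef anticanonical class'' is not how adjunction behaves across an inseparable Stein factorization. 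For \eqref{itm:main:fiber_space}, purely inseparable finite morphisms are \emph{not} isomorphisms in codimension one, so your Zariski-main-theorem step fails; the paper instead observes that when $f$ is separable, hypothesis \eqref{itm:weak_pos:separable_fiber} of Theorem~\ref{thm:weak pos} applies directly with $Y$ the Stein factorization, giving $Y$ abelian and hence $Y = A$.
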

To be on the safe side, let us mention that the divisor in a pair is always \emph{effective} in this article, and a Cartier divisor has always $\mathbb{Z}$-coefficients. Hence, the $i(K_X+\Delta)$ being Cartier in Theorem \ref{thm:main} implies that $\Delta$ is a $\mathbb{Z}_{(p)}$-divisor. 

As we explained above,  the main feature of Theorem \ref{thm:main} is that it deals with arbitrarily singular general fibers $G$ of the Albanese. In fact, 
Theorem~\ref{thm:main} has  been shown when $(G,\Delta|_G)$ has $F$-pure singularities \cite[Theorem~1.1]{Eji19w}. Examples of  variety to which Theorem \ref{thm:main}  (or even Theorem \ref{thm:main_smooth}) pertains, but the earlier results do not, are quasi-hyperelliptic surfaces, or the varieties with $-K_X$ nef and $G$ non-reduced in \cite[Section 14]{PZ19}. See, Remark \autoref{rem:alb} for more on this. 
%
%
%
%

We note that one can improve a little the statements of Theorem \ref{thm:main} in some special cases:
\begin{cor} \label{cor:surface}
Let $(X,\Delta)$ be a strongly $F$-regular projective surface pair such that 
 $-i(K_X+\Delta)$ is 
a nef Cartier divisor for an integer $i>0$ not divisible by $p$. 
Then the Albanese morphism of $X$ forms an algebraic fiber space. 
\end{cor}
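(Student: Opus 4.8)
The plan is to deduce Corollary \ref{cor:surface} from Theorem \ref{thm:main} by showing that, in the surface case, the Stein factorization $a : X \xrightarrow{f} Y \xrightarrow{g} A$ must already have $g$ an isomorphism. By Theorem \ref{thm:main}\eqref{itm:main:fiber_space} it suffices to prove that $f$ is separable, equivalently that the generic fiber $G$ of $f$ is geometrically reduced (geometrically integral, in fact) over the function field of $Y$. Since $\dim X = 2$, there are only two interesting cases for $\dim A$: if $\dim A = 0$ there is nothing to prove, and if $\dim A = 2$ then by the surjectivity in Theorem \ref{thm:main}\eqref{itm:main:Albanese_surjective} together with weak flatness \eqref{itm:main:a_exc} the morphism $a$ is generically finite and dominant onto an abelian surface, so the fibers of $f$ are finite; here one argues that a strongly $F$-regular surface admitting such a morphism with $-i(K_X+\Delta)$ nef Cartier must in fact have $a$ birational (indeed $X$ is then an abelian variety, as in Corollary \ref{cor:characterization_abelian}), so $g$ is an isomorphism. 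The substantive case is therefore $\dim A = 1$, where $f : X \to Y$ is a fibration from a surface onto a smooth projective curve $Y$ and $G$ is a (possibly singular or non-reduced) curve.

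For the case $\dim A = 1$, I would argue as follows. Pass to the generic fiber and consider $G$ over $K = k(Y)$. Adjunction gives $K_G + \Delta|_G \sim_{\bQ} (K_X + \Delta)|_G$, which is anti-nef of the appropriate Cartier-divisibility, and strong $F$-regularity of $(X,\Delta)$ restricts to the generic fiber to give that $(G, \Delta|_G)$ is strongly $F$-regular over $K$ — in particular $G$ is normal, hence reduced and irreducible as a $K$-scheme, but a priori only geometrically connected, not geometrically reduced. What one actually needs is geometric normality (or at least geometric reducedness) of $G$. The key point is a one-dimensional statement: a one-dimensional strongly $F$-regular scheme proper over a field $K$ with anti-nef (anti-effective-up-to-torsion) canonical class, whose Cartier index is prime to $p$, is geometrically reduced. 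This should follow because such a $G$ has arithmetic genus $0$ after base change to $\overline{K}$ is impossible if $G$ is non-reduced: non-reducedness over an imperfect $K$ forces, after a purely inseparable base change, a non-reduced structure whose associated reduced curve $G'_{\mathrm{red}}$ satisfies $K_{G'_{\mathrm{red}}} = (K_{G} )|_{G'_{\mathrm{red}}} - C$ with $C$ an effective ``conductor-type'' divisor of degree forced to be divisible by $p$ by the structure of the Frobenius pushforward, contradicting that $-i K_G$ is Cartier with $p \nmid i$ once one bounds degrees. I would organize this as: (i) reduce to $Y = \Spec K$ with $K = k(Y)$; (ii) invoke the restriction theorem for strong $F$-regularity to the generic fiber; (iii) prove the one-dimensional geometric-reducedness lemma using the Cartier index being prime to $p$.

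The main obstacle I expect is precisely step (iii): ruling out a geometrically non-reduced generic fiber. Over an imperfect field in characteristic $p$, a regular curve $G$ can fail to be geometrically reduced, and the subtlety is to see that the hypothesis ``$-i(K_X+\Delta)|_G$ nef Cartier with $i$ prime to $p$'' excludes this. The way to handle it is to base change by a finite purely inseparable extension $K'/K$ splitting off the non-reducedness, write $G_{K'} = m G'$ or more precisely analyze $(G_{K'})_{\mathrm{red}} = G'$, and use the fact that the pullback $(K_X+\Delta)|_{G_{K'}}$ remains anti-nef with the same Cartier index while adjunction on the reduced curve $G'$ produces an effective ``different'' whose degree is a positive multiple of $p-1$ (coming from the wild ramification / the length of the Frobenius kernel), forcing a numerical contradiction with the Cartier index being prime to $p$ once genus and degree bounds are applied. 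This is the one place a genuinely positive-characteristic phenomenon intervenes, and it is essential that $p \nmid i$; the quasi-hyperelliptic surfaces mentioned in the text show that without control one does get $\dim A = 1$ fibrations with non-classical fibers, but strong $F$-regularity together with the Cartier-index hypothesis is exactly what salvages separability here.
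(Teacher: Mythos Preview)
Your case split by $\dim Y$ (equivalently $\dim A$, once surjectivity is known) matches the paper's, and the endpoints $\dim Y \in \{0,2\}$ are essentially right. For $\dim Y = 2$ you should invoke Proposition~\ref{prop:main} directly rather than Corollary~\ref{cor:characterization_abelian}, which is stated only for smooth $X$; the point is simply that $X$ itself is of maximal Albanese dimension and $\omega_X^{*}$ is weakly positive because $-K_X = -(K_X+\Delta) + \Delta$ is nef plus effective.

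The problem is your treatment of $\dim Y = 1$. You have made this case far harder than it is. The paper's entire argument here is the single sentence ``if $\dim Y = 1$, then $f$ is separable'', after which Theorem~\ref{thm:main}\eqref{itm:main:fiber_space} finishes. The reason is elementary and has nothing to do with strong $F$-regularity of the fiber or with the Cartier index: since $f$ is the first half of a Stein factorization, $f_*\mathcal O_X = \mathcal O_Y$. If $f$ were inseparable then, because $Y$ is a curve over the perfect field $k$, one has $[K(Y)^{1/p}:K(Y)]=p$, and inseparability of $K(X)/K(Y)$ forces $K(Y)^{1/p}\subset K(X)$, i.e.\ $f$ factors through the Frobenius $F_Y:Y\to Y$. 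But then $(f_*\mathcal O_X)_\eta = H^0(X_\eta,\mathcal O_{X_\eta})$ contains $K(Y)^{1/p}$ and hence has $K(Y)$-dimension at least $p$, contradicting $f_*\mathcal O_X=\mathcal O_Y$.

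So your step (iii) is unnecessary, and the proof sketch you give for it --- degrees of conductor-type divisors being multiples of $p-1$ forcing a numerical contradiction with $p\nmid i$ --- does not stand on its own as written; I do not see how to turn it into an actual argument. Note also a misconception feeding your worry: quasi-hyperelliptic surfaces have geometric generic fiber a \emph{cuspidal} cubic, which is singular but \emph{reduced}; the Albanese fibration there \emph{is} separable. They illustrate the failure of generic smoothness, not of generic reducedness.
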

\begin{cor} \label{cor:K=0}
Assume that $p\ge 5$. 
Let $X$ be a normal projective variety with strongly $F$-regular singularities 
such that $iK_X$ is a numerically trivial Cartier divisor 
for an integer $i>0$ not divisible by $p$.
Let $a:X\to A$ be the Albanese morphism of $X$ and 
let $a:X \to Y \to A$ be the Stein factorization of $a$. 
If $\dim Y = \dim X-1$, then $a$ forms an algebraic fiber space. 
\end{cor}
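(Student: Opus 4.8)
The plan is to reduce the statement to Theorem~\ref{thm:main}\,\eqref{itm:main:fiber_space}. Note that $(X,0)$ is a strongly $F$-regular projective pair with $-iK_X\equiv 0$ nef and Cartier and $p\nmid i$, so Theorem~\ref{thm:main} applies; by part~\eqref{itm:main:fiber_space} it therefore suffices to prove that the map $f$ in the Stein factorization $a\colon X\xrightarrow{f}Y\xrightarrow{g}A$ is separable, since then $g$ is an isomorphism and $a$ is an algebraic fibre space. The separability of $f$ will be extracted from a genus computation on the general fibre of $f$, using that $\dim Y=\dim X-1$, so that the general fibre is a curve.

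First I would record the structure of the generic fibre $X_\eta$ of $f$ over $K:=k(Y)$. Since $X$ is normal (strongly $F$-regular rings are normal) and $f_*\sO_X=\sO_Y$ (Stein factorization), we have $H^0(X_\eta,\sO_{X_\eta})=K$, so $X_\eta$ is a proper regular curve over $K$ (regular because it is normal of dimension $\dim X-\dim Y=1$) with $K$ algebraically closed in $k(X_\eta)=k(X)$, hence geometrically irreducible. Next I would compute its arithmetic genus: because the pullback of $\omega_Y^{[i]}$ restricts trivially to $X_\eta$, adjunction gives $\omega_{X_\eta}^{\otimes i}\cong\sO_X(iK_X)|_{X_\eta}$; and since $iK_X\equiv 0$, we get $\deg\big(\sO_X(iK_X)|_{X_\eta}\big)=(iK_X\cdot F)=0$ for $F$ a general fibre, whence $\deg\omega_{X_\eta}=0$, so Riemann--Roch together with Serre duality on the curve $X_\eta$ force $p_a(X_\eta)=1$.

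Now suppose, for contradiction, that $f$ is not separable. Then $X_\eta$ is not geometrically reduced over $K$; being regular and geometrically irreducible, it is therefore not smooth over $K$. This is where the hypothesis $p\ge 5$ is used: by Tate's theorem on genus change in inseparable extensions, for a regular, geometrically irreducible, non-smooth proper curve over a field of characteristic $p$, the arithmetic genus exceeds the genus of the normalization of its base change to the algebraic closure by a positive multiple of $\tfrac{p-1}{2}$; in particular $p_a(X_\eta)\ge\tfrac{p-1}{2}\ge 2$, contradicting $p_a(X_\eta)=1$. Hence $f$ is separable, and the corollary follows from the first step.

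The step I expect to require the most care is the genus computation: one must make the adjunction identity $\omega_{X_\eta}^{\otimes i}\cong\sO_X(iK_X)|_{X_\eta}$ precise despite $X$ being only $\bQ$-Gorenstein and $f$ not necessarily flat, and one must check that $X_\eta$ meets the hypotheses of Tate's theorem --- regularity, geometric irreducibility, and $K$ being algebraically closed in $k(X_\eta)$ --- all of which follow from the normality of $X$ together with $f_*\sO_X=\sO_Y$ but need to be assembled cleanly. Everything else is formal once Theorem~\ref{thm:main} is in hand.
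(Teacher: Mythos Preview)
Your proposal is correct and follows the same reduction as the paper: both verify that $(X,0)$ satisfies the hypotheses of Theorem~\ref{thm:main}, and both deduce the conclusion from part~\eqref{itm:main:fiber_space} once the Stein-factorization map $f$ is known to be separable. The only difference lies in how separability is established. The paper simply invokes \cite[Corollary~1.8]{PW22} to conclude that the general fibre of $f$ is smooth (hence $f$ is separable), treating that result as a black box. You instead unpack the argument for the one-dimensional-fibre case directly: compute $p_a(X_\eta)=1$ from $K_X\equiv 0$ via adjunction, and then appeal to Tate's genus-change theorem, which for $p\ge 5$ forces any regular, geometrically irreducible, non-smooth curve to have arithmetic genus at least $(p-1)/2\ge 2$.

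Your route is more self-contained and makes transparent exactly where $p\ge 5$ enters, at the cost of having to check the adjunction identity $\omega_{X_\eta}^{\otimes i}\cong\sO_X(iK_X)|_{X_\eta}$ carefully (as you note) and of needing a form of Tate's theorem that applies when $X_\eta$ is not geometrically reduced---i.e.\ one that compares $p_a(X_\eta)$ with the genus of the normalisation of $(X_{\eta,\overline K})_{\mathrm{red}}$ and still asserts the difference is a \emph{positive} multiple of $(p-1)/2$. Such versions exist (Schr\"oer, and indeed this is essentially what \cite{PW22} proves in the relative-dimension-one case), so the argument goes through; just be sure to cite a formulation covering the non-geometrically-reduced situation rather than the classical statement for geometrically integral curves.
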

\subsection{Local triviality.}
Having shown Thm \ref{thm:main}, one might wonder if $f$ is actually trivial after an adequate base-change. Such statements are known in characteristic zero, e.g.,  \cite{Cao16} or \cite[Appendix]{PZ19}. It turns out that the answer is also  yes in our situation, if the base is a curve. However, one can take the base-change to be finite only over $\overline{\mathbb F}_p$. 
\begin{thm} \label{thm:isotrivial}
Let $(X,\Delta)$ be a projective strongly $F$-regular pair with $K_X+\Delta$ is $\mathbb Z_{(p)}$-Cartier, and let $a:X \to A$ be the Albanese morphism of $X$ with $\dim a(X)=1$. Further, assume either that 
\begin{enumerate}
\item \label{itm:isotrivial:semi_ample} $-(K_X+\Delta)$ is semi-ample or 
\item \label{itm:isotrivial:finite_field} $-(K_X+\Delta)$  is nef and $k=\overline{\mathbb F}_p$. 
\end{enumerate}
Then $a:X\to A $ is an algebraic fiber space (in particular $A$ is an elliptic curve) and the pairs $(F, \Delta|_F)$ for different choices of closed fibers $F$ are isomorphic. 

Additionally, if $k= \overline{\mathbb F}_{p}$ (case \eqref{itm:isotrivial:finite_field}), then
there exists a finite morphism $E\to A$ 
from an elliptic curve $E$ such that 
$(X,\Delta) \times_A E \cong (F,\Delta|_F) \times_{k} E$, 
where $F$ is any closed fiber of $a$. 

In the above statements the base-changes to $\Delta$, to $E$ or to $F$, are taken as a $\mathbb Z_{(p)}$ almost Cartier divisor as in \cite{Har94}.

\end{thm}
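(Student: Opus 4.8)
The plan is to reduce everything to the already-established Theorem \ref{thm:main} and then bootstrap to genuine triviality via a spreading-out / Frobenius-descent argument. First I would apply Theorem \ref{thm:main}\eqref{itm:main:Albanese_surjective}–\eqref{itm:main:Stein} to conclude that $a$ is surjective onto the $1$-dimensional abelian variety $A$ — hence $A$ is an elliptic curve — and that the Stein factorization $a : X \xrightarrow{f} Y \xrightarrow{g} A$ has $g$ purely inseparable. In case \eqref{itm:isotrivial:semi_ample}, semi-ampleness of $-(K_X+\Delta)$ lets me run the standard argument (adjunction along a general fiber plus the canonical bundle formula / subadditivity of $F$-regularity) to upgrade $g$ to an isomorphism, so that $a=f$ is already an algebraic fiber space; in case \eqref{itm:isotrivial:finite_field} I instead use that over $\overline{\mathbb F}_p$ the purely inseparable $g$ can be absorbed into a Frobenius twist of $E=A$, so after a finite base change we again get a fiber space. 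Once $a : X \to A$ is a fiber space, the fibers $(F,\Delta|_F)$ are strongly $F$-regular (by inversion of adjunction / restriction of $F$-regularity to a general fiber, combined with $-(K_X+\Delta)$ being nef so $-(K_F+\Delta|_F)$ is numerically trivial, forcing $K_F+\Delta|_F \sim_{\mathbb Q} 0$).

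The core isomorphism-of-fibers statement I would get from a relative moduli / rigidity argument. The deformations of a fixed fiber $(F,\Delta|_F)$ over the elliptic curve $A$ are controlled by $H^1(F, T_F(-\log \Delta|_F))$-type data, but here the relevant point is that $-(K_X+\Delta)$ nef forces the family to be "numerically isotrivial": the relative dualizing data along the section is numerically trivial, so the family has no genuine moduli variation over a curve. Concretely, I would show that for two closed fibers the difference defines a torsion class in $\mathrm{Pic}$ or $\mathrm{Ext}$, and torsionness over $\overline{\mathbb F}_p$ (every finitely generated subgroup of $\overline{\mathbb F}_p^\times$ is finite, and more generally abelian varieties over $\overline{\mathbb F}_p$ have torsion rational points over each finite subfield) kills it after a finite base change. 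This is exactly why the finite base change $E \to A$ exists only over $\overline{\mathbb F}_p$: one spreads the whole situation out over a finite field $\mathbb F_q$, observes the classifying map $A \to (\text{moduli})$ lands in a zero-dimensional (hence finite, hence torsion) locus, and then a finite étale cover trivializes it; over a general $k$ one only gets abstract isomorphism of the geometric fibers, not a global trivialization.

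For the final clause — the existence of $E\to A$ with $(X,\Delta)\times_A E \cong (F,\Delta|_F)\times_k E$ — I would argue as follows. Spread $(X,\Delta)\to A$ out to a model over a finite field $\mathbb F_q$; the set of isomorphism classes of fibers over $\overline{\mathbb F}_p$-points is then permuted by Frobenius and, being a single isomorphism class (by the previous paragraph), all fibers over $\mathbb F_{q^n}$-points become isomorphic after enlarging $n$. A Galois-descent / twisted-form argument then shows the family is a form of the constant family trivialized by a torsor under $\underline{\mathrm{Isom}}((F,\Delta|_F))$, and since this automorphism group scheme is of finite type, the torsor is trivialized by a finite cover $E\to A$; that $E$ is again an elliptic curve follows because a finite cover of an elliptic curve dominated by (a component of) a fiber product with $X$ has the right dimension and is itself an abelian variety by the Albanese universal property applied to its own Albanese, or simply because any connected finite étale cover of an elliptic curve is an elliptic curve and any inseparable cover is a Frobenius twist. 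The base changes of $\Delta$ are interpreted as $\mathbb Z_{(p)}$-almost Cartier divisors in the sense of \cite{Har94}, which is harmless since over the flat locus (the complement of codimension $\ge 2$) pullback of the almost-Cartier $\mathbb Z_{(p)}$-divisor behaves as expected.

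The main obstacle I expect is \emph{producing the actual isomorphisms of fibers}, not merely numerical isotriviality: one must rule out that the family, while numerically constant, varies in a "unipotent" or "wildly inseparable" way that has no moduli but is still nontrivial. Handling this requires the strongly $F$-regular hypothesis in an essential way (to get a well-behaved relative $F$-splitting and hence a rigidity/openness statement for the fibers), together with the $\overline{\mathbb F}_p$-hypothesis to convert "bounded, no continuous moduli" into "finite, hence trivialized by a finite cover". In the semi-ample case \eqref{itm:isotrivial:semi_ample} this is cleaner because semi-ampleness gives a morphism to the relevant coarse space directly; in case \eqref{itm:isotrivial:finite_field} it is the arithmetic of $\overline{\mathbb F}_p$ that does the work.
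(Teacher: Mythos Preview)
Your proposal has the overall architecture inverted and misses the concrete mechanism that actually proves isotriviality. In the paper, case \eqref{itm:isotrivial:finite_field} is proved \emph{first} and \emph{directly}: one constructs a specific relatively ample divisor $L$ on $X$ with $L^{\dim X}=0$ (Notation~\ref{notation:elliptic}), proves via the positivity machinery (Theorems~\ref{thm:nefness}--\ref{thm:num flat}) that the pushforwards $f_*\mathcal O_X(mL)$ are \emph{numerically flat} vector bundles on the elliptic curve $A$, and then invokes Oda's classification of vector bundles on elliptic curves. Over $\overline{\mathbb F}_p$ the degree-zero line bundle constituents are torsion, so a finite isogeny $E\to A$ makes all $f_*\mathcal O_X(mL)$ trivial; then the relative section ring $\bigoplus_m f_*\mathcal O_X(mL)$ is pulled back from a point, giving the product decomposition (Proposition~\ref{prop:trivialization_F_p}). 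Case \eqref{itm:isotrivial:semi_ample} is then obtained by spreading out over a finitely generated $\mathbb F_p$-algebra and applying case \eqref{itm:isotrivial:finite_field} to closed points to see that the relevant $\Isom$-scheme surjects onto $T_A$. None of this---the divisor $L$, numerical flatness of pushforwards, Oda's classification---appears in your plan; these are the load-bearing ingredients, not decoration.

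Your substitute, a ``relative moduli/rigidity'' argument via $H^1(F,T_F(-\log\Delta|_F))$ and ``numerical isotriviality'', is not a proof. Nefness of $-(K_X+\Delta)$ does not force $-(K_F+\Delta|_F)$ to be numerically trivial (only nef), and ``the difference of two fibers defines a torsion class in $\mathrm{Pic}$ or $\mathrm{Ext}$'' is not a meaningful statement as written. In positive characteristic there is no clean moduli/deformation route here, which is precisely why the paper works with the concrete bundles $f_*\mathcal O_X(mL)$ instead. Finally, your first step is unnecessarily roundabout: since $\dim a(X)=1$, the map $f$ in the Stein factorization is automatically separable, so Theorem~\ref{thm:main}\eqref{itm:main:fiber_space} gives that $a$ is an algebraic fiber space in one line, with no case split and no ``absorbing $g$ into a Frobenius twist''.
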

The next example shows that  in Theorem \ref{thm:isotrivial} one cannot have the base-change outside of the case of $\mathbb{F}_p$ in general:
\begin{eg} \label{rem:char0}
Choose $k\neq \overline{\mathbb F}_p$. 
Let $E$ be an elliptic curve 
and let $\mathcal L$ be a non-torsion line bundle of degree zero on $E$.
Put $X:=\mathbb P(\mathcal L \otimes \mathcal O_E)$. Then $-K_X$ is nef. 
However, for any finite morphism $\pi:E'\to E$ from an elliptic curve $E'$, 
$\pi^*\mathcal L$ is also not a torsion line bundle, 
so $X\times_E E' \to E'$ does not split. 
\end{eg} 
\subsection{The Albanese morphism when $-K_X$ ample.}
%
One might wonder if it is possible to say a more precise statement than that of Theorem \ref{thm:main} in the case of $-K_X-\Delta$ ample. When $X$ is smooth, then it is known in this case that $X$ is rationally chain connected (RCC) \cite{KMM92}, so the Albanese variety has dimension $0$. However, the RCC property is famously not known for singular $X$, and hence also the Albanese variety being trivial is not known. We show this here:

\begin{thm} \label{thm:ample}
Let $(X,\Delta) $ be a  projective pair such that 
$K_X+\Delta$ is $\mathbb Z_{(p)}$-Cartier.
Suppose either that 
\begin{enumerate}
\item \label{itm:ample:ample} $-(K_X+\Delta)$ is ample and $(X,\Delta)$ is $F$-pure, or 
\item \label{itm:ample:nef_big} $-(K_X+\Delta)$ is nef and big and $(X,\Delta)$ is strongly $F$-regular. 
\end{enumerate}
Then the Albanese morphism of $X$ is a constant map. 
\end{thm}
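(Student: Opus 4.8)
The plan is to reduce both cases to the single cohomological statement $H^1(X,\sO_X)=0$. Indeed, $\mathrm{Alb}(X)$ is the abelian variety dual to $(\mathrm{Pic}^0_X)_{\mathrm{red}}$, and $\dim\mathrm{Pic}^0_X\le\dim_k H^1(X,\sO_X)$, the right-hand side being the tangent space of $\mathrm{Pic}_X$ at the origin; hence $H^1(X,\sO_X)=0$ forces $\mathrm{Alb}(X)$ to be a point and the Albanese morphism to be constant. (This plays the role of the classical implication ``$-K_X$ ample $\Rightarrow$ $X$ rationally connected $\Rightarrow q(X)=0$'' of \cite{KMM92}, which is available only in the smooth case.) One may assume $\dim X\ge 2$, a curve with $-(K_X+\Delta)$ nef and big being $\mathbb P^1$.

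Case \eqref{itm:ample:nef_big} is then immediate: $(X,\Delta)$ is strongly $F$-regular and $-(K_X+\Delta)$ is nef and big, so the Kawamata--Viehweg-type vanishing for strongly $F$-regular pairs (Schwede--Smith), applied to the trivial divisor, gives $H^i(X,\sO_X)=0$ for all $i>0$ because $0-(K_X+\Delta)$ is nef and big. (Theorem~\ref{thm:main} already yields that $a$ is surjective and weakly flat, but not that $\dim A=0$; the vanishing is what settles that.)

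The real content is case \eqref{itm:ample:ample}, where $(X,\Delta)$ is merely $F$-pure. Now $\lfloor\Delta\rfloor$ may be nonzero and $(X,\Delta)$ need not be strongly $F$-regular --- think of a log canonical, non-klt Fano pair --- and, because of the coefficient-one components, one cannot perturb $\Delta$ to fall back into case \eqref{itm:ample:nef_big}. What is needed is the Kodaira-type statement that a sharply $F$-pure pair $(X,\Delta)$ with $-(K_X+\Delta)$ ample satisfies $H^i(X,\sO_X)=0$ for $i>0$. I would prove this by d\'evissage on $\lfloor\Delta\rfloor$: pick a prime component $D_0\le\lfloor\Delta\rfloor$; $F$-adjunction makes $(D_0,\Delta_{D_0})$ sharply $F$-pure with $-(K_{D_0}+\Delta_{D_0})=-(K_X+\Delta)|_{D_0}$ ample, so $H^1(D_0,\sO_{D_0})=0$ by induction on dimension, and from $0\to\sO_X(-D_0)\to\sO_X\to\sO_{D_0}\to 0$ it remains to kill $H^1(X,\sO_X(-D_0))$. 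Since $-D_0\sim_{\bQ}K_X+\bigl(-(K_X+\Delta)+(\Delta-D_0)\bigr)$ with the bracketed class a sum of an ample and an effective divisor, this is once more of Kawamata--Viehweg type, for which one uses Frobenius-stability: sharp $F$-purity of $(X,\Delta)$ makes $X$ globally $F$-split, so $\sO_X\to F_*\sO_X$ splits and $H^i(X,\mathcal{L})\hookrightarrow H^i(X,\mathcal{L}^{\otimes p})$ for every line bundle $\mathcal{L}$; Serre duality through the dualizing complex (so that no Cohen--Macaulay hypothesis on $X$ is needed) then shows $H^i\bigl(X,\sO_X(-A)\bigr)=0$ for $A$ ample and $0<i<\dim X$, which feeds the argument. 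The base case $\lfloor\Delta\rfloor=0$, where $(X,\Delta)$ is strongly $F$-regular outside a closed subset of codimension $\ge 2$, is treated by a similar but separate argument.

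The step I expect to be the main obstacle is precisely this vanishing in case \eqref{itm:ample:ample}: running the d\'evissage while controlling the coefficient-one components and the discrepancies of $(X,\Delta)$, and ensuring that the auxiliary ``ample $+$ effective'' classes produced at each stage retain enough positivity for a Frobenius-splitting argument. This is exactly where, in characteristic zero, one would simply quote Kawamata--Viehweg vanishing; the positive-characteristic substitute is the crux.
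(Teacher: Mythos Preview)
Your route is genuinely different from the paper's, and the order of the cases is reversed. The paper treats case~\eqref{itm:ample:ample} first and does so without any cohomological vanishing: take the Stein factorization $X\xrightarrow{f}Y\to A$, choose an ample $\mathbb Z_{(p)}$-Cartier divisor $L$ on $Y$ small enough that $-(K_X+\Delta)-f^*L$ is still ample, and use \cite{SW13} to pick an effective $D\sim_{\mathbb Z_{(p)}}-(K_X+\Delta)-f^*L$ with $(X,\Delta+D)$ $F$-pure. Then $K_X+\Delta+D\sim_{\mathbb Z_{(p)}}-f^*L$, and the semi-positivity engine Proposition~\ref{prop:genglgen} gives that
\[
f_*\sO_X\bigl(m(K_X+\Delta+D)\bigr)\otimes\sO_Y(H)\;\cong\;\sO_Y(H-mL)
\]
is generically globally generated for all large divisible $m$. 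Since $H$ is fixed and $L$ is ample, this forces $\dim Y=0$. Case~\eqref{itm:ample:nef_big} is then reduced to~\eqref{itm:ample:ample} by the standard perturbation. The entire argument is a few lines and never touches $H^1(X,\sO_X)$.

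Your plan, by contrast, aims at the stronger conclusion $H^1(X,\sO_X)=0$; recall that in positive characteristic $\mathrm{Pic}^0_X$ can be non-reduced, so $\dim\mathrm{Alb}(X)=0$ does not a priori imply $h^1(\sO_X)=0$. For case~\eqref{itm:ample:nef_big} your argument is essentially fine (after the same perturbation, Schwede--Smith yields global $F$-regularity and hence the vanishing). For case~\eqref{itm:ample:ample}, however, there is a real gap, and it lies exactly where you flagged it. The base case of your d\'evissage is not handled: with $\lfloor\Delta\rfloor=0$ the pair is indeed strongly $F$-regular in codimension one, but nothing in your sketch converts that into $H^1(X,\sO_X)=0$. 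For instance, the projective cone $X\subset\mathbb P^3$ over an ordinary plane cubic with $\Delta=0$ is $F$-pure with $-K_X$ ample but not strongly $F$-regular at the vertex; its $H^1$ does vanish, but only because it happens to be a hypersurface, and global $F$-splitting alone cannot produce this (an ordinary abelian variety is globally $F$-split with $H^1\ne0$). There is also a technical wrinkle in the inductive step: a component $D_0\le\lfloor\Delta\rfloor$ need not be Cartier, so the Frobenius-splitting manipulation for $H^1(X,\sO_X(-D_0))$ has to be run with reflexive sheaves rather than line bundles, which you do not address. The paper's point is precisely that all of this can be bypassed by trading the vanishing question for the positivity statement of Proposition~\ref{prop:genglgen}.
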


Interestingly this algebraic proof is the only one that we are aware of that shows the following topological vanishing, which is a direct consequence of Theorem \ref{thm:ample}, using \cite[Prop 2.7]{PZ19}.

\begin{cor}
In the situation of \autoref{thm:ample}, $H^1_{\textrm{\'et}}(X, \mathbb Q_l)=0$ for $l \neq p$.
\end{cor}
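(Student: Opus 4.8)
The plan is to combine Theorem \ref{thm:ample} with the standard comparison between the first $l$-adic cohomology and the Albanese variety. By Theorem \ref{thm:ample}, in either of its two cases the Albanese morphism $a:X\to A$ of $X$ is constant, so the Albanese variety is trivial; in particular $\dim A=0$. All that remains is to convert this into the vanishing of $H^1_{\textrm{\'et}}(X,\bQ_l)$.

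For this I would invoke \cite[Prop 2.7]{PZ19}, which for a projective variety over an algebraically closed field of characteristic $p$ and for $l\neq p$ bounds $\dim_{\bQ_l}H^1_{\textrm{\'et}}(X,\bQ_l)$ by $2\dim\mathrm{Alb}(X)$ (with equality when $X$ is normal). The mechanism behind such a statement is the Kummer sequence: for $l\neq p$ it yields $H^1_{\textrm{\'et}}(X,\mathbb Z_l(1))\cong T_l\mathrm{Pic}^0_{X,\mathrm{red}}$, and since $X$ is normal here — it is strongly $F$-regular in case \eqref{itm:ample:nef_big}, and $F$-pure with $K_X+\Delta$ being $\mathbb Z_{(p)}$-Cartier in case \eqref{itm:ample:ample} — the reduced identity component $\mathrm{Pic}^0_{X,\mathrm{red}}$ is an abelian variety dual to $\mathrm{Alb}(X)$, hence of $\mathbb Z_l$-rank exactly $2\dim\mathrm{Alb}(X)$. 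Feeding $\dim A=0$ into this gives $H^1_{\textrm{\'et}}(X,\bQ_l)=0$, as desired. Both cases of Theorem \ref{thm:ample} are handled by this single argument, so no case analysis is needed.

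The only point requiring care is verifying that \cite[Prop 2.7]{PZ19} applies in the present generality; concretely, that the normality of $X$ is what rules out an additional affine (``toric'') summand of $\mathrm{Pic}^0_X$, so that the relevant count is governed purely by $\mathrm{Alb}(X)$. Beyond that, this is a formal deduction: the substantive content is entirely contained in Theorem \ref{thm:ample}, and I do not expect any genuine obstacle in the present corollary.
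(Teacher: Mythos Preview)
Your proposal is correct and matches the paper's approach exactly: the paper states the corollary as ``a direct consequence of Theorem~\ref{thm:ample}, using \cite[Prop 2.7]{PZ19}'' with no further proof, which is precisely your argument (with your added explanation of the mechanism behind that proposition being accurate but not needed). Note that $X$ is normal simply because $(X,\Delta)$ is a pair in the paper's sense, so no case analysis is required there either.
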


\subsection{Outline of the argument}
Consider the situation of Theorem \ref{thm:main}, and let $Y$ be the normalization of $a(X)$. Denote by $h : X \to Y$ the induced morphism. For simplicity, let us assume that every space appearing in the proof is smooth, and that $\Delta=0$. In particular, then all the canonical divisors showing up in the proof are Cartier.  

The proof of Theorem \ref{thm:main} starts by reducing it to the statement that $\omega_Y^*$ is weakly positive, in Proposition \ref{prop:main}. As in this outline we assume that all varieties  are smooth, and hence so is $Y$, here this is a reduction to the statement that $-K_Y$ pseudo-effective (see Definition~\ref{defn:wp}).

The proof of Proposition \ref{prop:main} contains multiple steps, using multiple earlier results as well as peculiar features of the situation. We prefer to point the reader to the actual proof instead of detailing these different parts here.

Therefore, we arrived to the point that it suffices to prove that $-K_Y$ is pseudo-effective or equivalently that $-h^* K_Y$ is pseudo-effective. To explain the main idea behind this proof, one has to get a little bit technical. First, consider the following commutative diagram for each $e \geq 0$, where 
\begin{itemize}
\item $X^e=X$ and $Y^e= Y$ and the upper index is used only to distinguish the source of the $e$-times iterated Frobenii from their target, 
\item $h^e=h$ via the identifications of the previous point, 
\item $W_e$ is the normalization of the reduced subscheme of $X \times_Y Y^e$, and
\item $\alpha_e$, $\beta_e$ and $\gamma_e$ are the induced morphisms:
\end{itemize}
\begin{equation*}
\xymatrix@C=70pt{
X^e    \ar[rd]^{\gamma_e} \ar@/_1pc/[ddr]_{h^e} \ar@/^1pc/[drr]^{F^e_X} \\
 & W_e \ar[d]_{\alpha_e } \ar[r]_{\beta_e} & X \ar[d]^h \\
& Y^e \ar[r]^{F_Y^e} & Y
}
\end{equation*}
Consider further the ``magic map'' $m_e$ defined in the following diagram
\begin{equation*}
\xymatrix@C=10pt{
\gamma_{e,*} \sO_{X^e} \big(K_{X^e/X} \big) \ar[d]_{\Tr_{\gamma_e} \otimes \beta_e^* \omega_X^{-1}} \ar[rrrrrd]^{m_e} \\
\sO_{W_e} \big( K_{W_e/X} \big) \ar@{^(->}[rrrrr]_(0.43){\sO_{W_e} \big( K_{W_e/X} \big) \otimes \gamma_e^{\#}} & & & & &  \sO_{W_e} \big( K_{W_e/X} \big)\otimes \gamma_{e,*} \sO_X  \ar@{}[r]|(0.53){\Large \cong} & \gamma_{e,*} \sO_{W_e}  \left( \gamma_e^* K_{W_e/X} \right) 
}
\end{equation*}
We claim that $m_e$ is not zero. As $\sO_{W_e} \big( K_{W_e/X} \big) \otimes \gamma_e^{\#}$ is injective, for that it is enough to show that $\Tr_{\gamma_e} \otimes \beta_e^* \omega_X^{-1}$ is non-zero, for which it is enough to show that its pushforward via $\beta_e$ is non-zero. This latter non-zero statement follows from  the next commutative diagram, taking into account that the composition is surjective by the smoothness of $X$ (which we assumed here for simplicity, but the general strongly $F$-regular hypothesis in Theorem \autoref{thm:main} also allows the same argument):
\begin{equation*}
\xymatrix@C=70pt{
F_{X,*}^e \sO_{X^e} \big(K_{X^e/X} \big)
\ar[r]_{ \omega_X^{-1} \otimes \beta_{e,*} \Tr_{\gamma,e}}
\ar@/^1.5pc/[rr]^{\omega_X^{-1} \otimes\Tr_{F^e_X}}
&
\beta_{e,*} \sO_{W_e} \big( K_{W_e/X} \big)
\ar[r]_(0.6){\omega_X^{-1} \otimes\Tr_{\beta_e}}
&
\sO_X
}
\end{equation*}
Having explained that $m_e$ is non-zero, we work towards explaining why 
 $m_e$ is the key to the our proof. The basic reason is that it allows to transfer positivity from the source to the target. To explain why that is the case, we need to compute the canonical divisors on both ends of $m_e$. For the source this is quick. We have:
\begin{equation}
\label{eq:outline:m_e_source}
K_{X^e/X} = K_{X^e} - F^{e,*} K_X = (1-p^e) K_X,
\end{equation}
where in the last equality we used the identification $X =X^e$. 

For the target of $m_e$ this computation is a little longer. First, let $r>0$ be the smallest integer, for which the generic fiber of $\alpha_r$ is geometrically normal. It follow then that $W_e \to W_r \times_{Y^r} Y^e$ is an isomorphism along the generic fiber of $\alpha_e$ for any integer $e \geq r$, and hence for any such $e$ we have
\begin{equation}
\label{eq:outline:m_e_target}
\gamma_e^* K_{W_e/X} = F_X^{e-r,*} \gamma_r^* K_{W_r/X} + \gamma_e^* K_{W_e/W_r} \leq F_X^{e-r,*} \gamma_r^* K_{W_r/X} + h^* (1-p^{e-r}) K_Y,
\end{equation}
where the last inequality is worked out in the following line:
\begin{equation*}
K_{W_e/W_r} \leq K_{W_r \times_{Y^r} Y^e /W_r} = \alpha_e^* K_{Y^e/Y^r} = \alpha_e^* (1-p^{e-r}) K_Y
\end{equation*}
Putting together \eqref{eq:outline:m_e_source} and \eqref{eq:outline:m_e_target}, we obtain that $m_e$ is a non-zero $\sO_{W_e}$-linear homomorphism
\begin{equation}
\label{eq:outline:m_e_final}
m_e : \gamma_{e,*} \sO_X((1-p^e) K_X) \to F_X^{e-r,*} \gamma_r^* K_{W_r/X} + h^* (1-p^{e-r}) K_Y
\end{equation}
The basic idea of the proof of Theorem \ref{thm:main} is that some variant of $H^0(W_e, m_e)$ should allow to transfer positivity of $-K_X$, which is assumed in Theorem \ref{thm:main}, to positivity of $-h^* K_Y$. The mathematical reason behind this idea is clear on an intuitive level by \eqref{eq:outline:m_e_final}, especially if one disregards the term $F_X^{e-r,*} \gamma_r^* K_{W_r/X}$. However, turning it into a precise proof is rather technical, as one has to solve the following issues (see the proof of Theorem \ref{thm:weak pos} for details): 
\begin{enumerate}
\item \ul{Is $H^0(W_e,m_e)$ non-zero?} The best we can prove is that $H^0\big(m_e \otimes \sO_{W_e} ( \beta_e^* B)\big)$ is non-zero, where $B$ is a globally generated ample divisor on $X$. (See Step \ref{step:v} of the proof of Theorem \ref{thm:weak pos}; to match the notations there, one has to replace $X$ by $U$ and $Y$ by $V$). In fact, by looking at the proof, we can even allow a twist by an arbitrary nef divisor $N$ on $W_e$. That is, for every such $N$, $H^0\big(m_e \otimes \sO_{W_e} ( N + \beta_e^* B)\big)$ is non-zero. Using a few projection formulas and the expression of $m_e$ in \eqref{eq:outline:m_e_final}, we obtain a non-zero map
\begin{multline}
\label{eq:outline:m_e_B_N}
\qquad m_{e,B,N}: \quad
H^0(X, (1-p^e) K_X + p^e B + \gamma_e^* N) \\ \to H^0(X, h^* (1-p^{e-r}) K_Y + F_X^{e-r,*} \gamma_r^* K_{W_r/X} + p^e B + \gamma_e^* N ) 
\end{multline}
\item First let us disregard the term $F_X^{e-r,*} \gamma_r^* K_{W_r/X}$ from \eqref{eq:outline:m_e_B_N}. We will explain later how to deal with this term.  Then, $m_{e,B,N}$ becomes the non-zero $k$-linear map 
\begin{equation}
\label{eq:outline:m_e_B_N_simplified}
\qquad H^0(X, (1-p^e) K_X + p^e B + \gamma_e^* N) \to H^0(X, h^* (1-p^{e-r}) K_Y + p^e B + \gamma_e^* N).
\end{equation}
The issue is that \ul{it is not clear how to obtain Theorem} \ref{thm:main_smooth} \ul{directly  even using this simplified version of $m_{e, B,N}$}.  The main reason is that, asymptotically in $e$, the coefficient of $-K_X$ and of $B$ in the source of \eqref{eq:outline:m_e_B_N_simplified} are the same. However,  to show our theorem we would need to be able to induce sections of $B-h^* a K_Y$ from sections of $B-a K_X $ for any integer $a>0$. 

\item \label{itm:outline:limiting} Our solution to  the problem explained in the previous point is to do the construction of sections step by step. In this point, everything we explain is asymptotic in $e$, that is after taking limit $e \to \infty$. Using, this simplification, assuming that we know that $0 \neq H^0(X,B-h^* a K_Y)$ , we use $m_e$ to show that $0 \neq H^0\bigg(X,B-h^* \left(a+ \frac{1}{p^r} \right) K_Y\bigg)$. Iterating this, shows that $B - ah^* K_Y$ is effective for $a \to \infty$,  and hence $-h^* K_Y$ is pseudo-effective. 

The main issue is that during this argument, \ul{we end up needing to twist $m_e$ by sheaves of the form $- a' h^* K_Y$, which are not necessarily nef}. Hence, we are not able to show directly that the twist of $m_e$ induces a non-zero map on sections. Instead, we show that pushforwards of adequate twisted versions of $m_e$ are non-zero. Then, we show that the sources of these maps are weakly-positive sheaves, with controlled generic global generation properties. In particular, their images via non-zero maps to torsion-free sheaves also have non-zero sections. The key to these arguments are the recent semi-positivity statements showed by the first author in \cite{Eji19d}. The version used in the present article is stated  in Proposition \ref{prop:genglgen}, which is made more precise in Proposition \ref{prop:B+E}. Besides the discovery of the ``magic map'' $m_e$,  this is the main recent technical development that made the argument of the article possible. 

We note that this argument requires a very careful adjustment of the setting and also of the general setup. Hence, the proof of Theorem \ref{thm:weak pos} is admittedly technical. It is hard to explain all these setup fine-adjustments deeper in the framework of an outline. Therefore, we refer the reader to the actual proof of Theorem \ref{thm:weak pos} for these precise fine-tunings, especially to Step \ref{step:F} of that proof. 

\item We are left to explain briefly \ul{why one could discard the term $F_X^{e-r,*} \gamma_r^* K_{W_r/X}$.} The reason is that one can show that this term is anti-effective. The proof is based on writing down the canonical bundle formula for $K_{W_i/W_{i-1}}$ using foliations, see Section \ref{sec:foliations}, as well as Step \ref{step:negative} of the proof of Theorem \ref{thm:weak pos}. The main point is that minus the determinant of the saturated image of $\alpha_i^* \Omega_{Y} \to \Omega_{W_i}$ comes into the picture, which is anti-effective as $\sO_A^{\oplus \dim A}|_{Y} \cong \Omega_A|_Y \to \Omega_Y$ is generically surjective. 
\end{enumerate}

The most subtle part of the proof is the  limiting argument explained in point \eqref{itm:outline:limiting}. According to the best knowledge of the second author, the first birational geometry argument containing a limiting argument with a repeated use of Kawamata-Viehweg vanishing  is \cite[Thm~6.3]{HM10}, and the first article using limiting argument with a repeated use of semi-positivity statements is \cite[Thm 3.10]{Pat14}. 

We note that all the above considerations are put together in Step \ref{step:T} of the proof of Theorem \ref{thm:weak pos}. This yields Theorem \ref{thm:weak pos}, from which all points except the flatness statement of Theorem \ref{thm:main} follow in a few lines. See the proof of Theorem \ref{thm:main}. The  flatness statement is shown in the same proof, and it is lifted with minor modifications from \cite[Section 4]{PZ19}, which in turn is lifted with minor modifications from \cite[Theorem]{LTZZ}. The main difference is that the semi-positivity input therein is replaced by the statement of Theorem~\ref{thm:weak pos}.  As the argument is quite short, we refer the reader to the actual proof. 

The proof of Theorem \ref{thm:isotrivial} is much simpler than that of Theorem \autoref{thm:main}. It follows very closely the proof of the main statements of \cite{PZ19}, modifying that where it is necessary. The two main points of modifications are the following:
\begin{itemize}
\item The semi-positivity engines used in \cite{PZ19} are replaced by the newer ones that do not assume good singularities of general fibers.
\item The Frobenius pull-backs are avoided by the classification of vector bundles on elliptic curves \cite{Oda71}, which is necessary as Frobenius pull-backs destroy singularities if the general fibers are highly singular. 
\end{itemize}

\begin{ack}
The first author was partly supported by MEXT Promotion of Distinctive Joint Research Center Program JPMXP0619217849. The second author is supported by the grant 200020B/192035 from the Swiss National Science Foundation the ERC starting grant 804334.
\end{ack}

\section{Notations and definitions} 
Throughout this paper, we work over an algebraically closed field $k$ of characteristic $p>0$. 
By a \textit{$k$-scheme} we mean a separated scheme of finite type over $k$. 
A \textit{variety} is an integral $k$-scheme. 
A \textit{curve} (resp. \textit{surface}) is a variety of dimension one 
(resp. two). 

Let $\phi:S\to T$ and $T'\to T$ be morphisms of $k$-schemes. 
We denote by $S_{T'}$ the fiber product $S\times_T T'$ of $S$ and $T'$ over $T$. 
The induced morphism $S_{T'}\to T'$ by $\phi$ is denoted by $\phi_{T'}$. 

Let $X$ be a $k$-scheme. Let $e\ge1$ be a number. 
We denote by $F^e_X:X^e\to X$ the absolute Frobenius morphism of $X$ iterated $e$-times. 
Note that the source of $F_X^e$ is denoted by $X^e$ 
to distinguish from the target of $F_X^e$. 

Let $f:X\to Y$ be a morphism of $k$-schemes. 
We denote $f:X\to Y$ by $f^{e}:X^e\to Y^e$ when we regard $X$ and $Y$ as $X^e$ and $Y^e$. 
We define the morphism $F_{X/Y}^{e}:X^e\to X_{Y^e}=X\times_Y Y^e$ 
by $F_{X/Y}^{e}:=F_X^e\times_Y f^{e}$:
$$
\xymatrix{ 
X^e \ar@/_50pt/[dd]_{f^{e}} \ar[dr]^{F_X^e} \ar[d]_{F_{X/Y}^{e}} & \\
X_{Y^e} \ar[r] \ar[d]_{f_{Y^e}} & X \ar[d]^f \\
Y^e \ar[r]^{F_Y^e} & Y. 
}
$$

Let $X$ be a normal variety. A \textit{$\mathbb Q$-Weil divisor} (resp. \textit{$\mathbb Z_{(p)}$-Weil divisor}) on $X$ is an element of $\mathrm{Weil}(X) \otimes_{\mathbb Z} \mathbb Q$ (resp. $\mathrm{Weil}(X) \otimes_{\mathbb Z}\mathbb Z_{(p)}$), where $\mathrm{Weil}(X)$ is the additive group of Weil divisors on $X$. We say that a $\mathbb Q$-Weil divisor $\Delta$ is \textit{$\mathbb Q$-Cartier} (resp. \textit{$\mathbb Z_{(p)}$}-Cartier) if $i\Delta$ is a Cartier divisor for an integer $i>0$ (resp. an integer $i>0$ not divisible by $p$). 

A \emph{pair} $(X, \Delta)$  is a normal variety $X$ together with an effective $\bQ$-Cartier Weil divisor $\Delta$ on $X$. A pair $(X, \Delta)$ is affine (resp. projective) if $X$ is affine (resp. projective)

\medskip

Next, we collect several notions used in the proof of the main theorem. 
\begin{defn} \label{defn:F-sing}
Let $(X, \Delta)$ be an \textit{affine} pair. 
\begin{enumerate}
\item \cite{HR76, HW02} 
We say that  $(X,\Delta)$ is \textit{$F$-pure} if 
the composition of 
$$
\mathcal O_X 
\xrightarrow{{F_X^e}^\sharp} {F_X^e}_*\mathcal O_X
\hookrightarrow {F_X^e}_*\mathcal O_X\left(\lfloor (p^e-1)\Delta\rfloor\right)
$$
splits as an $\mathcal O_X$-module homomorphism for every $e\in\mathbb Z_{>0}$. 
\item \cite{HH89, HW02}
We say that $(X,\Delta)$ is \textit{strongly $F$-regular} if 
for every effective Weil divisor $D$ on $X$, 
there exists an $e\in\mathbb Z_{>0}$ such that the composition of 
$$
\mathcal O_X 
\xrightarrow{{F_X^e}^\sharp} {F_X^e}_*\mathcal O_X
\hookrightarrow {F_X^e}_*\mathcal O_X\left(\lceil (p^e-1)\Delta +D\rceil\right)
$$
splits as an $\mathcal O_X$-module homomorphism. 
\end{enumerate}
 We say that a pair $(X,\Delta)$ is \textit{$F$-pure} (resp. \textit{strongly $F$-regular}) if there exists an affine open cover $\{U_i\}$ of $X$ such that every $\left(U_i,\Delta|_{U_i}\right)$ is $F$-pure (resp. strongly $F$-regular). 
\end{defn}
\begin{defn} \label{defn:wp}
Let $Y$ be a normal quasi-projective variety and let $\mathcal F$ and $\mathcal G$ be coherent sheaves on $Y$. 
\begin{enumerate}
\item We say that a morphism $\varphi:\mathcal F\to \mathcal G$ is \textit{generically surjective} (resp. \textit{generically isomorphic}) if the induced morphism $\varphi_\eta:\mathcal F_\eta \to \mathcal G_\eta$ between stalks at the generic point $\eta$ of $Y$ is surjective (resp. an isomorphism). 
\item We say that $\mathcal F$ is \textit{generically generated by its global sections} if the natural morphism 
$$
H^0(Y, \mathcal F) \otimes_{\mathrm{Spec}\,k} \mathcal O_Y 
\to \mathcal F
$$
is generically surjective. 
\item (\textup{\cite[Definition~3.1]{Vie83II}})
We say that $\mathcal G$ is \textit{weakly positive} if for every ample line bundle $\mathcal H$ and each number $\alpha \ge 1$, there exists a number $\beta \ge 1$ such that 
$$
\left( \mathrm{Sym}^{\alpha\beta}(\mathcal F) \right)^{**}
\otimes \mathcal H^\beta
$$
is generically generated by its global sections. 
Here, $\mathcal G^{**}$ denotes the double dual of $\mathcal G$. 
\end{enumerate}
\end{defn}
\section{Varieties of maximal Albanese dimension with weakly positive anti-canonical}
In the proof, Proposition~\ref{prop:main} plays an important role. 
To prove Proposition~\ref{prop:main}, we need the following proposition. 
\begin{prop}[\textup{\cite[Proposition~1.4]{HPZ19}}] \label{prop:HPZ}
Let $Y$ be a normal projective variety such that 
$\dim H^0\left(Y, \omega_Y^{[n]}\right) \le 1$ for each $n\ge 1$. 
If the Albanese morphism $a:Y \to A$ is generically finite, 
then $a$ is separable. 
\end{prop}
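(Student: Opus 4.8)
\textbf{Proof proposal for Proposition \ref{prop:HPZ}.}

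The plan is to argue by contradiction: suppose $a : Y \to A$ is generically finite but inseparable. The strategy is to factor off the inseparability and derive a growth statement for the plurigenera $\dim H^0(Y, \omega_Y^{[n]})$ that contradicts the boundedness hypothesis. First I would replace $A$ by the image $a(Y)$ is not needed since $a$ is already generically finite onto $A$ (dimensions agree), and pass to a resolution or alteration only if necessary; in fact it is cleaner to stay on $Y$ itself and exploit that the generic point of $Y$ maps to the generic point of $A$ via a finite inseparable field extension $K(Y)/K(A)$ of some degree $p^e$.

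Next I would use the standard factorization of the $e$-times iterated relative Frobenius. Writing $A^e$ for the Frobenius twist and $F_A^e : A^e \to A$, the extension $K(Y)/K(A)$ being purely inseparable of exponent $e$ means that $a$ factors (rationally) through $A^e$, i.e. there is a dominant generically finite \emph{separable} rational map $Y \dashrightarrow A^e$ composed with $F_A^e : A^e \to A$. Since $A^e$ is again an abelian variety and $F_A^e$ is finite of degree $p^{eg}$ where $g = \dim A$, the key consequence is a comparison of canonical sheaves: $F_A^{e,*} K_A = K_{A^e}$ (as $A$, $A^e$ are abelian, both are trivial), so pulling back one gets, after resolving, $K_Y = (\text{ramification of the separable part}) + (\text{pullback of } K_{A^e} = 0)$, forcing $\kappa(Y) \geq 0$ with the canonical ring growing. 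More precisely, the purely inseparable part contributes a genuinely positive divisor: the relative Frobenius $F_{A}^e$ has a conductor-type divisor, and pulling it back to $Y$ produces an effective, non-torsion divisor $D$ with $K_Y \sim_{\mathbb{Q}} D + (\text{effective})$, so that $\omega_Y^{[n]}$ has sections growing at least linearly in $n$, contradicting $\dim H^0(Y,\omega_Y^{[n]}) \leq 1$ for all $n$.

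The main obstacle I expect is making the divisor-theoretic bookkeeping rigorous on the possibly singular normal variety $Y$ without a clean $\mathbb{Q}$-Cartier hypothesis: one must be careful to work with the reflexive powers $\omega_Y^{[n]}$ and to track the ramification of the purely inseparable map correctly (using, e.g., the $p$-curvature / foliation description of inseparable morphisms between normal varieties, or passing to the normalization of the graph). Concretely, I would pick the Stein-type factorization $Y \to Y' \to A$ where $Y' \to A$ is the maximal purely inseparable subextension from $A$'s side and $Y \to Y'$ is separable generically finite; then analyze $\omega_{Y'}$ via the foliation $\mathcal{F} \subset T_{A}$ corresponding to the height-one quotient, using that $K_{\mathcal{F}}$ (or rather $-K_{\mathcal{F}}$, a subsheaf of $T_A \cong \mathcal{O}_A^{\oplus g}$) controls the discrepancy, and deduce that $\omega_{Y'}^{[n]}$ — hence $\omega_Y^{[n]}$ — grows. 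The delicate point is ensuring this divisor is not merely effective but unbounded in $n$, i.e. genuinely big along the inseparable direction; this is where one uses that a non-trivial height-one foliation on an abelian variety has $K_{\mathcal{F}}$ with a non-zero global section after a suitable twist, equivalently that the quotient $T_A / \mathcal{F}$ is a non-trivial subsheaf-quotient whose determinant is effective and non-torsion. Once that is in hand, the contradiction with the plurigenus bound is immediate, so I would spend the bulk of the effort on this positivity input and treat the rest as formal.
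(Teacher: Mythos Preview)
The paper does not give a proof of this proposition: it is quoted verbatim from \cite[Proposition~1.4]{HPZ19} and used as a black box in the proof of Proposition~\ref{prop:main}. So there is no in-paper argument to compare against; one can only measure your proposal against the statement itself (and, implicitly, against the argument in \cite{HPZ19}).

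Your overall strategy---assume $a$ is inseparable, peel off a height-one purely inseparable factor, apply the foliation canonical-bundle formula, and force $\dim H^0\bigl(Y,\omega_Y^{[n]}\bigr)\ge 2$ for some $n$---is the natural one and is in the same circle of ideas as \cite{HPZ19}. But several steps in your outline are either wrong or do not close the argument:
\begin{itemize}
\item You assert that $a$ is generically finite \emph{onto $A$}. This is not part of the hypothesis: ``generically finite'' here means onto the image $Z:=a(Y)$, and one must work with $Z$ (or its normalization). This matters because $\Omega_Z$ is not $\mathcal O_Z^{\oplus g}$.
\item The ``conductor-type divisor'' you attribute to $F_A^e$ does not exist: $A$ is an abelian variety, so $K_{A^e/A}=(1-p^e)K_A=0$. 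No positivity comes from the Frobenius of $A$ itself.
\item Your factorization $Y\to Y'\to Z$ with $Y\to Y'$ separable and $Y'\to Z$ purely inseparable is the non-standard direction; the canonical factorization through the relative separable closure gives purely inseparable on top and separable on the bottom, and this is what the foliation formalism handles cleanly (a height-one p.i.\ cover $Y\to Y_1$ corresponds to $\mathcal F\subset T_Y$, not to a foliation on $A$).
\item Most importantly, the step you yourself flag as ``delicate'' is not established. Your claim that a nontrivial $p$-closed foliation $\mathcal F\subset T_A\cong\mathcal O_A^{\oplus g}$ has $\det(T_A/\mathcal F)$ effective and \emph{non-torsion} is false: any constant sub-bundle $\mathcal O_A^{\oplus r}\hookrightarrow\mathcal O_A^{\oplus g}$ closed under $p$-th powers is a foliation with $\det\mathcal F$ trivial. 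What rules out this degenerate case is precisely the universal property of the Albanese (a factorization of $a$ through a nontrivial isogeny of abelian varieties would contradict the universality of $a$), and this has to be invoked explicitly. Without it you only get $K_Y\ge 0$, i.e.\ $h^0\bigl(\omega_Y^{[n]}\bigr)\ge 1$, which does not contradict the hypothesis.
\end{itemize}
In short: right shape, but the positivity input that produces \emph{two} independent pluricanonical sections is not yet there, and the places where you reach for it (conductor of $F_A^e$; determinant of a foliation on $A$) do not deliver it. You need to bring in the Albanese universal property to exclude the ``constant foliation'' situation, and run the foliation formula on $Y$ rather than on $A$.
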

\begin{prop} \label{prop:main}
Let $Y$ be a normal projective variety of maximal Albanese dimension such that 
$\omega_Y^*$ is weakly positive. 
Then $Y$ is an abelian variety. 
\end{prop}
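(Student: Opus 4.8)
The plan is to prove that the Albanese morphism $a\colon Y\to A$ is an isomorphism; granting this, $Y$ is an abelian variety. I would begin with a standard reduction: since $\omega_Y^{*}$ is a rank-one reflexive sheaf, its weak positivity is equivalent to $-K_Y$ being pseudo-effective (weak positivity gives, for every ample $H$ and every $\alpha\ge1$, a $\beta\ge1$ with $H^{0}\bigl(Y,\mathcal{O}_Y(-\alpha\beta K_Y+\beta H)\bigr)\ne0$, so $-K_Y+\tfrac1\alpha H$ is $\mathbb{Q}$-effective for all $\alpha$). In particular, if $h^{0}\bigl(Y,\omega_Y^{[n]}\bigr)\ge1$ for some $n$, then $nK_Y$ is linearly equivalent to an effective divisor whose negative is pseudo-effective, so intersecting with a general complete-intersection curve forces $nK_Y\sim0$; thus $h^{0}\bigl(Y,\omega_Y^{[n]}\bigr)\le1$ for all $n\ge1$. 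Since maximal Albanese dimension means $a$ is generically finite, Proposition~\ref{prop:HPZ} then shows $a$ is separable.

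Next I would show $\omega_Y\cong\mathcal{O}_Y$. Write $Z:=a(Y)\subseteq A$ with normalization $\nu\colon Z^{\nu}\to Z$, and factor $a$ as $Y\xrightarrow{b}Z^{\nu}\to Z\hookrightarrow A$ with $b$ generically finite and separable. The conormal sequence of $Z\hookrightarrow A$ together with $\Omega^{1}_A\cong\mathcal{O}_A^{\oplus\dim A}$ gives a surjection $\mathcal{O}_Z^{\oplus\dim A}\twoheadrightarrow\Omega^{1}_Z$; pulling this back along $\nu$ and $b$ --- using that $\nu$ is birational and $b$ is generically \'etale --- produces an $\mathcal{O}_Y$-linear map $\mathcal{O}_Y^{\oplus\dim A}\to\Omega^{1}_Y$ surjective at the generic point of $Y$. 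Taking $(\dim Y)$-th exterior powers yields a generically surjective map from a free sheaf onto $\omega_Y$ (the reflexive hull of $\bigwedge^{\dim Y}\Omega^{1}_Y$), so $h^{0}(Y,\omega_Y)\ge1$; combined with the first paragraph this forces $K_Y\sim0$, i.e. $\omega_Y\cong\mathcal{O}_Y$.

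I would then prove $a$ is surjective. The same exterior-power computation applied to $Z^{\nu}$ shows $\omega_{Z^{\nu}}$ is generically globally generated, so $\kappa(Z^{\nu})\ge0$; on the other hand $b$ generically \'etale gives $\bigl(b^{*}\omega_{Z^{\nu}}\bigr)^{[n]}\hookrightarrow\omega_Y^{[n]}$ and hence $\kappa(Z^{\nu})\le\kappa(Y)=0$, so $\kappa(Z^{\nu})=0$. Since $Z=a(Y)$ generates $A$, the positive-characteristic structure theory of subvarieties of abelian varieties --- to the effect that a subvariety whose (normalization has) Kodaira dimension is zero is a translate of an abelian subvariety --- should then force $Z$ to be such a translate, necessarily all of $A$, so $a$ is surjective and $\dim Y=\dim A$. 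I expect this step to be the main obstacle: in positive characteristic one cannot simply invoke the classical Ueno--Kawamata analysis, and controlling $Z\subseteq A$ (equivalently, establishing the needed characteristic-$p$ statement, together with the input that maximal Albanese dimension implies non-negative Kodaira dimension) is the delicate point.

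Finally I would finish as follows. Now $a^{*}\Omega^{1}_A\to\Omega^{1}_Y$ is generically an isomorphism (as $a$ is separable, generically finite, and surjective), so its $(\dim Y)$-th exterior power --- a map $\mathcal{O}_Y\cong\bigwedge^{\dim Y}(a^{*}\Omega^{1}_A)\to\bigwedge^{\dim Y}\Omega^{1}_Y\hookrightarrow\omega_Y\cong\mathcal{O}_Y$ --- is a nonzero endomorphism of $\mathcal{O}_Y$, hence an isomorphism; therefore $a^{*}\Omega^{1}_A\to\Omega^{1}_Y$ is an isomorphism over the smooth locus $Y_{\mathrm{sm}}$, so $a|_{Y_{\mathrm{sm}}}$ is \'etale. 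Let $Y\xrightarrow{p}\widetilde{A}\xrightarrow{q}A$ be the Stein factorization: $p$ is birational ($Y$ and $\widetilde{A}$ are normal and $p_{*}\mathcal{O}_Y=\mathcal{O}_{\widetilde{A}}$) and, by Zariski's Main Theorem, restricts to an open immersion on $Y_{\mathrm{sm}}$, so $q$ is finite and \'etale away from a codimension-$\ge2$ subset of $\widetilde{A}$. By purity of the branch locus ($A$ regular, $\widetilde{A}$ normal) $q$ is \'etale everywhere, and a connected finite \'etale cover of an abelian variety is again an abelian variety, so $\widetilde{A}$ is smooth with $\omega_{\widetilde{A}}\cong\mathcal{O}_{\widetilde{A}}$. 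Writing $K_Y=p^{*}K_{\widetilde{A}}+\sum a_iE_i=\sum a_iE_i$ and using $K_Y\sim0$, all discrepancies $a_i$ vanish; but a prime divisor exceptional over the smooth variety $\widetilde{A}$ has positive discrepancy, so $p$ has no exceptional divisor. Then $p$ is a small birational morphism onto the locally factorial variety $\widetilde{A}$, hence an isomorphism, and $Y\cong\widetilde{A}$ is an abelian variety. (I phrase the last step through discrepancies and local factoriality precisely to avoid Grauert--Riemenschneider-type vanishing, which fails in characteristic $p$.)
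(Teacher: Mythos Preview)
Your first two paragraphs (bounding $h^0(\omega_Y^{[n]})$, separability via Proposition~\ref{prop:HPZ}, and $K_Y=0$) match the paper's Steps~1--2. The genuine gap is exactly where you flag it: your third paragraph appeals to a positive-characteristic Ueno--Kawamata statement (``a subvariety of an abelian variety whose normalization has $\kappa=0$ is a translate of an abelian subvariety'') to obtain surjectivity of $a$, but you do not supply a proof and this is not available in characteristic $p$ in the generality you need. Without it, the argument does not close.

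The paper circumvents this step entirely by a genuinely characteristic-$p$ route. Rather than proving surjectivity first, it proves directly that $Y$ is \emph{smooth}: from $K_Y=0$ and Grothendieck duality one gets that $(\det F_{Y*}\mathcal O_Y)^{**}$ is self-dual, hence of degree zero on curves; then, using the homogeneous filtration of $F_{A*}\mathcal O_A$ pulled back along the separable generically finite map, one builds a filtration of $F_{Y*}\mathcal O_Y$ whose graded pieces receive injections from numerically trivial line bundles, and a degree count on a general curve forces these injections to be isomorphisms. Thus $F_{Y*}\mathcal O_Y$ is locally free and Kunz's theorem gives smoothness. Once $Y$ is smooth, $\Omega_Y^1$ is generically globally generated with trivial determinant, hence a trivial bundle, and the injection $g^*\colon H^0(A,\Omega_A^1)\hookrightarrow H^0(Y,\Omega_Y^1)$ from \cite{MS87} gives $\dim A\le\dim Y$; combined with maximal Albanese dimension this forces $\dim A=\dim Y$ and $g^*\Omega_A^1\to\Omega_Y^1$ to be an isomorphism, so $g$ is \'etale and $Y$ is abelian. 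Surjectivity is a \emph{consequence}, not an input.

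A smaller issue in your last paragraph: the claim that a prime divisor exceptional over the smooth $\widetilde A$ has positive discrepancy requires $Y$ itself to be smooth (or at least canonical) along that divisor, which you have not shown. This is, however, unnecessary: you already proved $p|_{Y_{\mathrm{sm}}}$ is an open immersion, so any $p$-exceptional divisor would lie in $Y_{\mathrm{sing}}$, which has codimension $\ge 2$ --- hence there are none, and the discrepancy discussion can be dropped.
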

\begin{proof} We divide the proof into six steps.
\begin{step} \label{step:kappa}
We prove that $\dim H^0\left(Y,\omega_Y^{[n]}\right) \le 1$ for each $n\ge 1$. 
Assume that there is an effective Weil divisor $C \sim_{\mathbb Z} nK_Y$ 
for some $n\ge 1$. 
Since $\mathcal O_Y(-K_Y)$ is weakly positive by the assumption, 
we see that $\mathcal O_Y(-C)$ is also weakly positive, 
so \cite[Lemma~4.4]{Eji19w} shows that $C=0$. 
This means that $\dim H^0\left(Y,\omega_Y^{[n]}\right) \le1$ for each $n\ge 1$. 
\end{step}
\begin{step} \label{step:K_Y=0}
We show that $K_Y=0$. 
Let $g:Y\to A$ denote the Albanese morphism of $Y$. 
Let $Z$ be the image of $g$. 
Let $\gamma: Y\to Z$ be the induced morphism. 
By Step~\ref{step:kappa} and Proposition~\ref{prop:HPZ}, 
we see that $\gamma$ is separable, 
so there is a generically surjective morphism $g^*\Omega_A^1 \to \Omega_Y^1$. 
Since $\Omega_A^1$ is a trivial vector bundle, 
we conclude that $\Omega_Y^1$ is generically generated by its global sections, 
which means that $K_Y \ge 0$. 
Since $\mathcal O_Y(-K_Y)$ is weakly positive, 
\cite[Lemma~4.4]{Eji19w} tells us that $K_Y=0$. 
\end{step}
\begin{step} \label{step:det}
Put 
$
\mathcal D:=\left( \det\left( {F_Y}_*\mathcal O_Y \right) \right)^{**}. 
$
We prove that $\mathcal D^* \cong \mathcal D$.
We may assume that $Y$ is smooth. 
Since $\omega_Y \cong \mathcal O_Y$ by Step~\ref{step:K_Y=0}, 
we obtain from the Grothendieck duality that 
\begin{align*}
\mathcal Hom \left( {F_Y}_*\mathcal O_Y, \mathcal O_Y \right)
\cong {F_Y}_* \mathcal Hom \left(\mathcal O_Y, \mathcal O_Y \right)
\cong {F_Y}_*\mathcal O_Y, 
\end{align*}
so 
$
\mathcal D^* 
\cong \det\left(\left({F_Y}_*\mathcal O_Y\right)^*\right)
\cong \det\left({F_Y}_*\mathcal O_Y\right)
=\mathcal D.
$
\end{step}
\begin{step} \label{step:smooth}
We show that $Y$ is smooth. 
We use the notation in the following diagram: 
$$
\xymatrix{
Y^1 \ar[d]_{F_{Y/Z}} \ar[dr]^{F_Y} & \\ 
Y_{Z^1} \ar[d]_{\gamma_{Z^1}} \ar[r]_{w} & Y \ar[d]^\gamma \\
Z^1 \ar[r]^{F_Z} & Z
}
$$
Then, we have the following morphisms:
$$
\gamma^* \left( {F_Z}_*\mathcal O_Z \right)
\to w_* \mathcal O_{Y_{Z^1}}
\xrightarrow{w_* \left({F_{Y/Z}}^\sharp\right)} 
w_* {F_{Y/Z}}_* \mathcal O_{Y} = {F_Y}_*\mathcal O_Y. 
$$
The composition is generically surjective, 
since $\gamma$ is a generically finite separable surjective morphism by Proposition~\ref{prop:HPZ}. 
Combining this with 
$
g^* \delta: g^* \left( {F_A}_*\mathcal O_A \right)
\twoheadrightarrow 
\gamma^* \left( {F_Z}_*\mathcal O_Z \right), 
$
where $\delta:{F_A}_*\mathcal O_A \twoheadrightarrow {F_A}_* \mathcal O_Z \cong {F_Z}_*\mathcal O_Z$
is obtained as the direct image of the natural morphism $\mathcal O_A \twoheadrightarrow \mathcal O_Z$ by $F_A$, 
we get the generically surjective morphism 
$$
\epsilon: g^* \left( {F_A}_*\mathcal O_A \right) 
\to {F_Y}_* \mathcal O_Y. 
$$
Thanks to \cite[Corollary~1.7]{Oda71}, we see that ${F_A}_*\mathcal O_A$ is a homogeneous vector bundle, so there is a filtration 
$$
0 = \mathcal E'_0 
\subset \mathcal E'_1 
\subset \cdots 
\subset \mathcal E'_{p^{\dim A}-1} 
\subset \mathcal E'_{p^{\dim A}} 
= g^* \left( {F_A}_* \mathcal O_A \right) 
$$
whose each quotient $\mathcal E'_{i+1}/\mathcal E'_i$ is 
an algebraically trivial line bundle. 
For each $0 \le j < p^{\dim Y}$, 
let $i_j$ be the largest number such that 
$\epsilon\left(\mathcal E'_{i_j}\right)$ is of rank $j$. 
Let $\mathcal E_j$ be the saturation in ${F_Y}_*\mathcal O_Y$ 
of $\epsilon(\mathcal E'_{i_j})$. 
Then we obtain the filtration 
$$
0 = \mathcal E_0 
\subset \mathcal E_1 
\subset \cdots 
\subset \mathcal E_{p^{\dim Y}-1} 
\subset \mathcal E_{p^{\dim Y}} 
= {F_Y}_* \mathcal O_Y
$$
such that for each $0\le j < p^{\dim Y}$, 
\begin{itemize}
\item $\mathcal E_{j+1}/\mathcal E_j$ is a torsion-free coherent sheaf of rank one and 
\item there is an algebraically trivial line bundle $\mathcal L_j$ and an injective morphism $\iota_j:\mathcal L_j \hookrightarrow \mathcal E_{j+1}/\mathcal E_j$ for each $0\le j <p^{\dim Y}$. 
\end{itemize}
Note that $\mathcal E_0=0$, since ${F_Y}_*\mathcal O_Y$ is torsion-free. 
Let $V \subseteq Y$ be a smooth open subset such that 
$\mathrm{codim}_Y(Y\setminus V) \ge 2$ 
and each $\left(\mathcal E_{j+1}/\mathcal E_j\right)|_V$ is a line bundle. 
Let $C \subseteq V$ be a general projective curve. 
Then 
$$
\bigotimes_{j=0}^{p^{\dim Y}-1} \mathcal L_j|_C
\overset{\bigotimes \iota_j|_C}{\hookrightarrow}
\bigotimes_{j=0}^{p^{\dim Y}-1} \left( \mathcal E_{j+1}/\mathcal E_j \right)|_C
\cong 
\det \left( {F_V}_*\mathcal O_V\right) |_C.  
$$
Since each $\mathcal L_j|_C$ and 
$\det \left( {F_V}_*\mathcal O_V\right)|_C =\mathcal D|_C$ 
are line bundles of degree zero by Step~\ref{step:det}, 
we see that each $\iota_j|_C$ is an isomorphism, which implies that 
$
\mathcal L_j|_V \overset{\iota_j|_V}{\hookrightarrow} 
(\mathcal E_{j+1}/\mathcal E_j)|_V
$ 
is also an isomorphism. 
Thus the composition of
$$
\mathcal L_j \overset{\iota_j}{\hookrightarrow} \mathcal E_{j+1}/\mathcal E_j 
\hookrightarrow (\mathcal E_{j+1}/\mathcal E_j)^{**}
$$
is an isomorphism, and hence so is $\iota_j$. 
In particular, $\mathcal E_{j+1}/\mathcal E_j$ is a line bundle. 
This means that ${F_Y}_*\mathcal O_Y$ is locally free, 
and thus $Y$ is smooth by Kunz's theorem. 
\end{step}
\begin{step} \label{step:trivial}
We prove that $\Omega_Y^1$ is a trivial vector bundle. 
As we have seen in Step~\ref{step:K_Y=0}, 
$\Omega_Y^1$ is generically generated by its global sections, 
so there is a generically isomorphic morphism 
$\lambda:\bigoplus^{\dim Y}\mathcal O_Y \to \Omega_Y^1$. 
Taking the determinant, we obtain the generically isomorphic morphism 
$\det(\lambda): \mathcal O_Y \to \omega_Y$. 
Then by Step~\ref{step:K_Y=0}, we see that $\det(\lambda)$ is an isomorphism, 
which means that so is $\lambda$. 
\end{step}
\begin{step} \label{step:abelian}
We show the assertion. 
By \cite[Lemma~(1.3)]{MS87}, the morphism 
$$
g^*:H^0(A,\Omega_A^1) \to H^0(Y,\Omega_Y^1)
$$ 
is injective. 
This is an isomorphism, since 
$$
\dim A = \dim H^0(A,\Omega_A^1) 
\le \dim H^0(Y,\Omega_Y^1) = \dim Y 
$$
and $\dim Y\le \dim A$ by the assumption. 
Taking into account Step~\ref{step:trivial}, we get that $g^*\Omega_A^1 \to \Omega_Y^1$ is an isomorphism, 
which means that $g$ is \'etale, 
so $Y$ is an abelian variety by \cite[Section~18, Theorem]{Mum70}. 
\end{step}
\end{proof}
\begin{rem}
Steps~\ref{step:trivial} and~\ref{step:abelian} in the above proof can be skipped by using \cite[Theorem~0.3]{HPZ19}, which shows that if $X$ is a smooth projective variety with $\kappa(X)=0$ and if the Albanese morphism $a:X\to A$ is generically finite, then $a$ is birational. 
\end{rem}
\section{Foliations associated to  fibrations}
\label{sec:foliations}
Below we define some foliations associated to surjective fibrations. These foliations  will be used in the proof of Theorem \ref{thm:weak pos}, and are primarily needed as there $f$ is not assumed to be separable, in one of the cases. In fact, in the separable $f$ case, the proof of Theorem \ref{thm:weak pos} can be done without the use of Lemma \ref{lem:foliation}. 

\begin{notation}
\label{notation:foliation}
Let $g:X\to Z$ be a surjective morphism between normal varieties. 
Let $W_e$ be the normalization of the reduced subscheme of $X \times_Z Z^e$, and introduce the following names for the induced morphisms:
\begin{equation}
\label{eq:diagram}
\xymatrix@C=70pt{
X^e  \ar[rd]_{\gamma_{e-1}^1}  \ar[rrd]^{\gamma_e} \ar@/_3pc/[ddrr]_{g^e} \ar@/^2pc/[drrr]^{F^e_X} \\
& W_{e-1}^1 \ar[r]_{\delta_e} \ar[dr]_{\alpha_{e-1}^1} & W_e \ar[d]_{\alpha_e } \ar[r]_{\beta_e} & X \ar[d]^g \\
& & Z^e \ar[r]^{F_Z^e} & Z
}
\end{equation}
Note that $W_e=W_{e-1}^1/\sF^e$ for the $p$-foliation 
\begin{equation*}
\sT_{W_{e-1}^1} \supseteq \sF^e  = \big( \coker (d \delta_e) \big)^*,
\end{equation*}
where $d \delta_e : \delta_e^* \Omega_{W_e}\to \Omega_{W_{e-1}^1}$ is the induced homomorphism on differentials by $\delta_e$. Set then
\begin{equation*}
\sG^e \overset{\mathrm{def}}{=} \big( \coker (d \alpha^1_{e-1}) \big)^*.
\end{equation*}  
\end{notation}

\begin{lem} \label{lem:foliation}
Using Notation \ref{notation:foliation}, we have $\sF^e = \sG^e$, and hence $W_e = W_{e-1}^1/\sG^e$.
\end{lem}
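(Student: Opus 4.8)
The claim is that the two foliations $\sF^e$ and $\sG^e$ on $W_{e-1}^1$ coincide, where $\sF^e = (\coker d\delta_e)^*$ governs the quotient $W_{e-1}^1 \to W_e$ and $\sG^e = (\coker d\alpha^1_{e-1})^*$ is the (vertical) foliation attached to the morphism $\alpha^1_{e-1} : W^1_{e-1} \to Z^e$. The natural approach is to compare the two cokernels by exhibiting a commutative diagram of cotangent maps and chasing saturations. Since both $\sF^e$ and $\sG^e$ are by definition saturated subsheaves of $\sT_{W^1_{e-1}}$ (being duals of cokernel sheaves, hence reflexive), and since $W^1_{e-1}$ is normal, it suffices to prove the equality over the smooth locus, i.e.\ in codimension one, and then to check that both have the same generic rank.

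\textbf{Step 1: Generic rank.} First I would compute the generic rank of each foliation. The morphism $\delta_e : W^1_{e-1} \to W_e$ is purely inseparable of degree $p$ (it is the relative Frobenius-type map coming from $W_{e-1}^1 = (W_{e-1})^1$ mapping to $W_e$, a subquotient of the relative Frobenius $F_{X/Z}$-tower), so $\coker d\delta_e$ has rank $1$ generically and $\sF^e$ has rank $1$. On the other side, $\alpha^1_{e-1} : W^1_{e-1} \to Z^e$ is, generically, the base change of $\alpha_{e-1} : W_{e-1} \to Z^{e-1}$ along $F_Z : Z^e \to Z^{e-1}$ — or rather the relevant comparison is with the map $\gamma^1_{e-1}$ and the structure of the tower — and the key point (which is exactly why $W_e$ is defined as the normalization of the reduced fiber product $X \times_Z Z^e$ and $W_{e-1}^1$ sits as in \eqref{eq:diagram}) is that $d\alpha^1_{e-1}$ and $d\delta_e$ have the same rank drop generically. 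I would pin this down by factoring $\alpha^1_{e-1} = \alpha_e \circ \delta_e$ and comparing with $\alpha_e : W_e \to Z^e$, whose differential is generically surjective (as $\alpha_e$ is generically smooth, $W_e$ being normal and the generic fiber geometrically reduced by construction).

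\textbf{Step 2: The cokernel comparison.} The heart of the argument: from $\alpha^1_{e-1} = \alpha_e\circ\delta_e$ we get the factorization of cotangent maps
\[
\delta_e^*\alpha_e^*\Omega_{Z^e} \xrightarrow{\ \delta_e^* d\alpha_e\ } \delta_e^*\Omega_{W_e} \xrightarrow{\ d\delta_e\ } \Omega_{W^1_{e-1}},
\]
whose composite is $d\alpha^1_{e-1}$. Since $d\alpha_e$ is generically surjective, its cokernel is torsion, and one obtains a generic (indeed codimension-one) identification $\coker(d\alpha^1_{e-1}) \cong \coker(d\delta_e)$ modulo torsion; dualizing kills the torsion and gives $\sG^e \cong \sF^e$ as subsheaves of $\sT_{W^1_{e-1}}$ — I would check the inclusion $\sF^e \subseteq \sG^e$ (sections killing $d\delta_e$ also kill the composite $d\alpha^1_{e-1}$) holds everywhere, and then upgrade to equality using Step 1 plus saturatedness. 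The last sentence, $W_e = W^1_{e-1}/\sG^e$, is then immediate from $W_e = W^1_{e-1}/\sF^e$ (Notation~\ref{notation:foliation}).

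\textbf{Main obstacle.} The delicate point is Step 1 — verifying that the two rank drops genuinely agree, i.e.\ that $d\alpha^1_{e-1}$ does not drop rank more than $d\delta_e$ does. This is where the precise structure of the tower \eqref{eq:diagram} enters: one must use that $\beta_e$ is generically finite separable and that $W_e$ has geometrically reduced (hence generically smooth) generic fiber over $Z^e$, so that the vertical foliation $\ker(d\alpha_e)$ on $W_e$ pulls back correctly under $\delta_e$ and the extra direction contributed by $\delta_e$ (the relative Frobenius direction) is exactly the one direction by which $d\alpha^1_{e-1}$ fails to be surjective beyond what $d\alpha_e$ already misses. I expect the cleanest route is to pass to the generic point of $Z$, where everything becomes a statement about a purely inseparable extension of function fields of degree $p$, and the equality of foliations becomes the identity $\Omega_{W^1_{e-1}/W_e} = \Omega_{W^1_{e-1}/Z^e}$ of relative differential modules (in codimension one on $W^1_{e-1}$), which follows from $\Omega_{W_e/Z^e}$ being generically zero.
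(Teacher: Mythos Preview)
Your overall strategy (show $\sF^e \subseteq \sG^e$, then argue that equal generic rank plus saturatedness forces equality) is reasonable in principle, but the execution contains errors that are fatal precisely in the cases where the lemma is used.

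First, the claim that $\delta_e$ has degree $p$, so that $\sF^e$ has rank $1$, is wrong: $\delta_e$ is a height-one purely inseparable morphism, but its degree is $p^{\operatorname{rank}\sF^e}$, and this rank equals $\dim_{k(W_{e-1})}\Omega_{k(W_{e-1})/k(Z^{e-1})}$, which is at least $\dim X-\dim Z$ and possibly larger. You are conflating ``height one'' with ``degree $p$''.

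Second, and more seriously, Step~2 hinges on $d\alpha_e$ being generically surjective, which you justify by asserting that the generic fibre of $\alpha_e$ is geometrically reduced ``by construction''. This is false: the construction only makes $W_e$ itself reduced, not the generic fibre of $\alpha_e$ over $Z^e$ geometrically reduced. Indeed, in Step~\ref{step:negative} the integer $r$ is \emph{defined} as the least $e$ for which $\alpha_e$ is separable, and the lemma is invoked there for every $s\le r$; for $s<r$ the map $\alpha_s$ is inseparable and $d\alpha_s$ is not generically surjective, so your cokernel comparison breaks down. Your fallback claim that $\Omega_{W_e/Z^e}$ is generically zero would force $\alpha_e$ to be generically \'etale, which is impossible since $\dim W_e>\dim Z$.

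The paper sidesteps the rank computation entirely. After the easy inclusion $\sF^e\subseteq\sG^e$, it shows directly that $\delta_e:W^1_{e-1}\to W_e$ factors through the quotient $W^1_{e-1}\to W^1_{e-1}/\sG^e$, which yields $\sG^e\subseteq\sF^e$. The point is that $W^1_{e-1}/\sG^e$ carries compatible morphisms to $Z^e$ (derivations in $\sG^e$ annihilate functions from $Z^e$ by definition) and to $W_{e-1}$ (functions on $W_{e-1}$ pull back to $p$-th powers on $W^1_{e-1}$, and any derivation kills $p$-th powers). Hence $W^1_{e-1}/\sG^e$ maps to the fibre product $W_{e-1}\times_{Z^{e-1}}Z^e$ and therefore, being normal, through its normalised reduced subscheme $W_e$. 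No separability hypothesis on $\alpha_e$ is needed.
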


\begin{proof}
By \eqref{eq:diagram}, we have $ \alpha_{e-1}^1 = \alpha_e \circ \delta_e $, and hence $\sF^e  \subseteq \sG^e$. If $W^1_{e-1}/\sG^e$ has induced morphisms to $Z^e$ and to $W_{e-1}$ that fits into the following commutative diagram, then by the minimality of the normalization of the reduced subscheme of the fiber product we have to have $\sG^e= \sF^e$:
\begin{equation*}
\xymatrix{
W_{e-1}^1 \ar[rrd]  \ar[rd] \ar[rdd] \\
& W^1_{e-1}/\sG^e \ar[r] \ar[d] & W_{e-1} \ar[d]^{\alpha_{e-1}} \\
& Z^e \ar[r] & Z^{e-1} 
}
\end{equation*}
In fact, for this it is enough to show that $\sG^e$ kills all the local functions on any affine open set of both $W_{e-1}$ and for $Z^e$:
\begin{itemize}
\item  for $W_{e-1}$ this holds just by the virtue of $\sG^e$ being a $p$-foliation, and
\item  for $Z^e$, let $\Spec A \subseteq Z^e$ and $\Spec B \subseteq W_{e-1}^1$ be two affine opens such that $\alpha_{e-1}^1 (\Spec B) \subseteq \Spec A$. Fix also $D \in\Gamma(\Spec B, \sG^e)$, and view it either as a derivation on $B$ or as a functional on $\Omega_{B/k}$ depending on whether we evaluate on ring elements or on differentials. The connection between the two points of view is given by $D(b)=D(db)$, for every $b \in B$. We need to show that $D(a)=0$ for every $a \in A$. However, by the definition of $\sG^e$, we have $D(da)=0$, and then by the connection between the two points of view we have $D(a)=D(da)=0$. 
\end{itemize}

\end{proof}
\section{The main technical theorem}
Next, we prove Proposition~\ref{prop:B+E}, which is the key in the proof of Theorem~\ref{thm:main}. Proposition~\ref{prop:B+E} is an application of the following positivity result:
\begin{prop}[\textup{\cite[Corollary~6.5, Lemma~5.4 and Example~5.8]{Eji19d}}] \label{prop:genglgen}
Let $(U, \Delta)$ be a pair with a surjective projective morphism $h:U\to V$ be a to a normal quasi-projective variety. Let $U_{\eta}$ denote the generic fiber of $h$. 
Suppose that 
\begin{itemize}
\item $\left(U,\Delta|_{U_\eta}\right)$ is $F$-pure, 
\item $i(K_U+\Delta)|_{U_\eta}$ is an ample Cartier divisor for some integer $i>0$ not divisible by $p$, and 
\item there exists a generically finite projective surjective morphism from $V$ to a closed subset of a dense open subset of an abelian variety $A$. 
\end{itemize}
Set $\mathcal F_m := h_*\mathcal O_U(m(K_U+\Delta))$ 
for each $m\ge 1$ with $i|m$. Fix also a symmetric ample divisor $H'$ $($i.e. $(-1)_A^* H' =H')$ on $A$ 
with $|H'|$ free and let $H$ be the pullback of $(\dim V +1)H'$ to $V$.

Then, there exists an integer $m_0\ge 1$ such that the following hold:
\begin{enumerate}
\item \label{itm:genglgen:diminished}
The set $\mathbb B_-(\mathcal F_m)$ is contained in a proper closed set of $V$ 
for each $m\ge m_0$ with $i|m$ 
$($for the definition of $\mathbb B_-$, see \cite[\S 4]{Eji19d}$)$. 
\item  \label{itm:genglgen:generated}
The coherent sheaf $\mathcal F_m(H)$ is generically generated by its global sections 
for each $m\ge m_0$ with $i|m$. 
\end{enumerate}
\end{prop}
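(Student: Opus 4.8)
The plan is to deduce part \eqref{itm:genglgen:generated} from part \eqref{itm:genglgen:diminished} by a Castelnuovo--Mumford regularity computation on the abelian variety $A$, and to obtain part \eqref{itm:genglgen:diminished}, which is the heart of the matter, from the weak positivity in positive characteristic of the relative pluricanonical sheaves $\mathcal F_m$; this is the structure of \cite[Corollary~6.5, Lemma~5.4, Example~5.8]{Eji19d}.

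For \eqref{itm:genglgen:diminished}, I would first shrink $V$ to a dense open subset over which $h$ is flat, each $\mathcal F_m$ is locally free and compatible with base change, and the image of $V$ in $A$ is open. Since $i(K_U+\Delta)|_{U_\eta}$ is ample we have $\mathcal F_m\neq 0$ for all large $m$ divisible by $i$, and it is enough to prove that $\mathcal F_m$ is weakly positive over this open set, equivalently that $\mathbb B_-(\mathcal F_m)$ is a proper closed subset. The engine is Frobenius descent along the fibers: the $F$-purity of $\bigl(U,\Delta|_{U_\eta}\bigr)$ yields, after spreading out over $V$, a \emph{generic} splitting of $\mathcal O_U\to F^e_{U,*}\mathcal O_U\bigl(\lceil(p^e-1)\Delta\rceil\bigr)$, and combining this splitting with the relative Frobenius $F^e_{U/V}$ and the trace of $F^e_U$ realizes $\mathcal F_m$, generically over $V$, as a direct summand of a Frobenius-type pushforward of $\mathcal F_{mp^e}$ twisted down by the pullback of a fixed ample divisor on $V$. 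Letting $e\to\infty$ and using that such Frobenius pushforwards carry generic global generation forward up to a bounded ample twist, one concludes that $\mathcal F_m\otimes(\text{arbitrarily small ample on }V)$ is generically globally generated, which is exactly the statement about $\mathbb B_-$.

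For \eqref{itm:genglgen:generated}, let $\pi:V\to A'\subseteq A$ be the given generically finite projective surjective morphism onto a closed subset $A'$ of a dense open of $A$, so $n:=\dim V=\dim A'$. By the projection formula, and because $\pi$ is generically finite, it suffices to show that $\pi_*\mathcal F_m\otimes\mathcal O_{A'}\bigl((n+1)H'\bigr)$ is generically globally generated; moreover $\mathbb B_-(\pi_*\mathcal F_m)$ is still a proper closed subset of $A'$ by \eqref{itm:genglgen:diminished}. Now run Mumford regularity on $A'$ relative to the base-point-free, symmetric, ample polarization $H'$: away from $\mathbb B_-(\pi_*\mathcal F_m)$ the sheaf $\pi_*\mathcal F_m$ becomes nef after an arbitrarily small ample twist, so a regularity count shows that $\pi_*\mathcal F_m\bigl((n+1)H'\bigr)$ is $0$-regular with respect to $H'$ on a dense open set, and a $0$-regular sheaf is generated by its global sections where it is $0$-regular. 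The symmetry of $H'$ is what makes the cohomology vanishings on the abelian variety clean (via $(-1)_A$-invariance and Mumford's index theorem), and the freeness of $|H'|$ is used to form the Koszul-type resolutions underlying the regularity argument; the precise bound $n+1$ is verified in \cite[Example~5.8]{Eji19d}.

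The main obstacle is \eqref{itm:genglgen:diminished}, specifically that $F$-purity is assumed only along the \emph{generic} fiber $U_\eta$: it need not hold for the total space $U$ or for the closed fibers of $h$, and it does not spread out naively. Hence the Frobenius splitting must be kept relative and generic over $V$, and one has to control how the trace of relative Frobenius interacts with the possibly very singular closed fibers of $h$. Handling precisely this is the technical core of \cite{Eji19d}, and it is what allows Proposition~\ref{prop:B+E}, and hence Theorem~\ref{thm:weak pos}, to avoid tameness hypotheses.
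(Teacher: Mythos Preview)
The paper does not give a proof of this proposition at all: it is stated purely as a citation of \cite[Corollary~6.5, Lemma~5.4 and Example~5.8]{Eji19d}, and the text moves directly on to Proposition~\ref{prop:B+E}. So there is nothing in the present paper to compare your argument against; what you have written is a reconstruction of the cited results rather than of anything appearing here.

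As a sketch of the argument in \cite{Eji19d}, your outline is broadly reasonable in spirit --- part \eqref{itm:genglgen:diminished} is indeed the substantive positivity statement (Corollary~6.5 there), and \eqref{itm:genglgen:generated} is extracted from it via an effective bound supplied by the abelian-variety structure (Lemma~5.4 and Example~5.8 there). One point, however, is not quite right as you have phrased it: Castelnuovo--Mumford regularity is a \emph{global} cohomological condition, so saying that $\pi_*\mathcal F_m\bigl((n+1)H'\bigr)$ is ``$0$-regular with respect to $H'$ on a dense open set'' does not make sense, and that is not how the passage from \eqref{itm:genglgen:diminished} to \eqref{itm:genglgen:generated} actually goes. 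The mechanism in \cite{Eji19d} is rather that $\mathbb B_-(\mathcal F_m)\subsetneq V$ already encodes generic global generation after \emph{any} small ample twist, and Lemma~5.4/Example~5.8 identify the specific twist $(\dim V+1)H'$ (pulled back from $A$) that works uniformly; the symmetry and freeness of $H'$ enter through abelian-variety cohomology, not through a local regularity count.
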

\begin{prop} \label{prop:B+E}
Consider the following varieties and morphisms:
\begin{itemize} 
\setlength{\leftskip}{-10pt}
\item Let $(X, \Delta)$ be a normal projective pair with  $\Delta$ being a $\mathbb Z_{(p)}$-Weil divisor.
\item Let $f:X\to Y$ be a surjective morphism to a normal projective variety $Y$ of maximal Albanese dimension. 
\item Let $V$ be a dense open subset of $Y$, and 
set $U:=f^{-1}(V)$ and $h:=f|_U:U\to V$. 
\end{itemize}
Additionally, consider the  following divisors with the following properties on the above varieties:
\begin{itemize}
\item Let $B$ be an ample $\mathbb Z_{(p)}$-Cartier divisor on $X$. 
\item Let $D$ be a $\mathbb Z_{(p)}$-Weil divisor on $X$ such that 
	$D-(K_X+\Delta)$ is a nef $\mathbb Z_{(p)}$-Cartier divisor. 
\item Let $E \ge 0$ be a $\mathbb Z_{(p)}$-Weil divisor on $U$.
\item Suppose that 
\begin{itemize}
\item[$\circ$] $\left(X_\eta,\Delta|_{X_\eta}\right)$ is 
strongly $F$-regular, where $X_\eta$ is the generic fiber of $f$, and 
\item[$\circ$] $D|_{X_\eta}$ and $E|_{X_\eta}$ are 
	nef $\mathbb Z_{(p)}$-Cartier divisors. 
\end{itemize}
\item Take a Cartier divisor $H$ on $V$ as in Proposition~\ref{prop:genglgen}. 
\end{itemize}
Then, there exist positive integers $c$ and $d$ such that 
\begin{itemize}
\item $p\nmid c$ and $p\nmid d$, 
\item $cB$, $cD$ and $cE$ are integral, and 
\item there exists an integer $m_0\ge 1$ such that 
$$
\big( h_* \mathcal O_U(cm (B|_U+dD|_U+E)) \big) \otimes \mathcal O_V(H)
$$ 
is generically generated by its global sections for each $m \ge m_0$. 
\end{itemize}
\end{prop}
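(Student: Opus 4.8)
\emph{Reduction to the case $V=Y$.} The plan is to reduce to $V=Y$ and then apply Proposition~\ref{prop:genglgen} to an auxiliary boundary and divisor built from $B$, $D$ and $E$, exploiting that $D-(K_X+\Delta)$ is nef. For the reduction, let $\overline E\ge 0$ be the Zariski closure of $E$ in $X$, a $\mathbb Z_{(p)}$-Weil divisor with $\overline E|_U=E$ and the same restriction to the generic fibre as $E$; let $a:Y\to A$ be the Albanese morphism of $Y$, let $H'$ be the symmetric ample divisor on $A$ appearing in Proposition~\ref{prop:genglgen}, and set $\overline H:=a^*((\dim Y+1)H')$, so that $\overline H|_V=H$ since $\dim V=\dim Y$. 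Push-forward commutes with restriction to an open subset of the base, hence $h_*\mathcal O_U\big(cm(B|_U+dD|_U+E)\big)$ is the restriction to $V$ of $f_*\mathcal O_X\big(cm(B+dD+\overline E)\big)$; and if a coherent sheaf on $Y$ is generically generated by its global sections, then so is its restriction to the dense open $V$ (the stalk at the generic point is unchanged, and the image of the global sections in that stalk can only grow under restriction to an open set). Since moreover $f:X\to Y$ is projective and surjective and $Y$ is of maximal Albanese dimension, $Y\to a(Y)$ is a generically finite projective surjective morphism onto the closed subset $a(Y)$ of the abelian variety $A$. It therefore suffices to produce integers $c,d>0$ prime to $p$ with $cB$, $cD$, $c\overline E$ integral, together with an $m_0$, such that $f_*\mathcal O_X\big(cm(B+dD+\overline E)\big)\otimes\mathcal O_Y(\overline H)$ is generically globally generated for all $m\ge m_0$. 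Below I write $E$ for $\overline E$ and $H$ for $\overline H$.

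\emph{The auxiliary pair and divisor.} Put $N:=D-(K_X+\Delta)$, a nef $\mathbb Z_{(p)}$-Cartier divisor, and for an integer $d>0$ with $p\nmid d$ set
$$
\Theta_d:=\Delta+\tfrac1d E,\qquad M_d:=D+\tfrac1d B+\tfrac1d E,
$$
so that $d\,M_d=B+dD+E$ and $M_d-(K_X+\Theta_d)=N+\tfrac1d B$ is a sum of a nef and an ample divisor, hence ample (in particular nef), and is $\mathbb Z_{(p)}$-Cartier. On the generic fibre $X_\eta$ of $f$, the divisor $M_d|_{X_\eta}=D|_{X_\eta}+\tfrac1d B|_{X_\eta}+\tfrac1d E|_{X_\eta}$ is ample (two nef summands and one ample one) and $\mathbb Z_{(p)}$-Cartier, and $(K_X+\Theta_d)|_{X_\eta}$ is $\mathbb Z_{(p)}$-Cartier. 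Finally, $(X_\eta,\Delta|_{X_\eta})$ is strongly $F$-regular and $E|_{X_\eta}$ is effective and $\mathbb Z_{(p)}$-Cartier, so by openness of strong $F$-regularity under a small effective-Cartier perturbation of the boundary there is an $\varepsilon_0>0$ with $(X_\eta,\Delta|_{X_\eta}+tE|_{X_\eta})$ strongly $F$-regular, hence $F$-pure, for $0\le t\le\varepsilon_0$; I fix from now on a $d$ with $p\nmid d$ and $\tfrac1d\le\varepsilon_0$, so that $(X_\eta,\Theta_d|_{X_\eta})$ is $F$-pure.

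\emph{Applying the positivity result.} I would now invoke Proposition~\ref{prop:genglgen} in the form in which $m(K_U+\Delta)$ is replaced by $m\,M$ for a divisor $M$ with $M-(K_U+\Theta)$ nef and $M|_{U_\eta}$ ample, only the $\mathbb Q$-Cartier divisor $\Theta|_{U_\eta}$ entering the hypotheses; this ``nef correction'' is exactly the content of \cite[Example~5.8]{Eji19d}. Applied to $f:X\to Y$ with the boundary $\Theta_d$, the divisor $M_d$, and the divisor $H$ on $Y$, and taking $c$ to be a multiple prime to $p$ of all denominators that occur (so that $cB$, $cD$, $cE$ and $c\,M_d$ are integral and $c$ is divisible by the relevant index of Proposition~\ref{prop:genglgen}), the conclusion that $f_*\mathcal O_X(c m\,M_d)\otimes\mathcal O_Y(H)$ is generically globally generated for all large $m$ becomes, via $c\,d\,M_d=c(B+dD+E)$ and a relabelling of $m$, the desired statement that $f_*\mathcal O_X\big(cm(B+dD+E)\big)\otimes\mathcal O_Y(H)$ is generically globally generated for $m\ge m_0$. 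Restricting to $V$ as in the first step completes the proof.

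I expect the main obstacle to be extracting and correctly invoking the needed generality of Proposition~\ref{prop:genglgen}: one must allow $K_U+\Delta$ to be replaced by a divisor $M$ for which only $M-(K_U+\Theta)$ is nef and $M|_{U_\eta}$ is ample — the hypothesis ``$D-(K_X+\Delta)$ nef'' together with the ample summand $\tfrac1d B$ exists precisely to build such an $M$ — while keeping the generic-fibre pair $F$-pure, which forces the coefficient $\tfrac1d$ of $E$ to be small (hence $d\gg0$) and relies on openness of strong $F$-regularity. The remaining bookkeeping with $c$ and $d$ (primality to $p$, clearing denominators, matching indices, relabelling $m$) is routine.
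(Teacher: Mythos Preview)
Your argument is correct and follows the same overall strategy as the paper: choose $d$ prime to $p$ so that $\bigl(X_\eta,\Delta|_{X_\eta}+\tfrac1d E|_{X_\eta}\bigr)$ is $F$-pure, then arrange for the divisor $B+dD+E$ to appear as a multiple of a log canonical divisor of an auxiliary $F$-pure pair with ample generic restriction, and apply Proposition~\ref{prop:genglgen}. The two proofs differ only in how this last arrangement is carried out, and in a harmless initial reduction you make.

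The paper does \emph{not} reduce to $V=Y$; it works directly with $h:U\to V$ throughout. More substantively, the paper does not invoke a ``nef-twisted'' form of Proposition~\ref{prop:genglgen}. Instead it makes the reduction explicit: since $B+dD-d(K_X+\Delta)$ is ample, one may choose $c$ prime to $p$ so that $c(B+dD-d(K_X+\Delta))$ is very ample Cartier, then use the Bertini theorem for $F$-purity \cite[Corollary~6.10]{SW13} to pick a general member $G$ of this linear system with $\bigl(U,\Delta':=\Delta|_U+\tfrac1d E+\tfrac1{cd}G|_U\bigr)$ $F$-pure along $X_\eta$. One then computes $cd(K_U+\Delta')\sim_{\mathbb Z} c(B|_U+dD|_U+E)$ and applies Proposition~\ref{prop:genglgen} in its stated form. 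Your route---absorbing the nef correction $D-(K_X+\Delta)+\tfrac1d B$ by citing \cite[Example~5.8]{Eji19d}---is exactly the abstraction of this Bertini trick; the paper's version has the advantage of relying only on the statement of Proposition~\ref{prop:genglgen} as recorded here, while yours is shorter but needs the reader to know that the cited example provides the required generality. Your reduction to $V=Y$ via the closure $\overline E$ is fine but unnecessary for the argument.
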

\begin{proof}
Since $\left(X_\eta, \Delta|_{X_\eta}\right)$ is strongly $F$-regular, 
there is a $d\in\mathbb Z_{>0}$ with $p\nmid d$ such that 
$$
\left( X_\eta, \left. \left(\Delta+\frac{1}{d}E\right)\right|_{X_\eta} \right)
$$ 
is $F$-pure. 
As $B$ is ample and $D-(K_X+\Delta)$ is nef, 
there is a $c\in\mathbb Z_{>0}$ such that 
\begin{itemize}
\item $p\nmid c$, 
\item $c(K_X+\Delta)$, $cB$, $cD$ and $cE$ are integral, and  
\item $c(B +dD -d(K_X +\Delta))$ is a very ample Cartier divisor. 
\end{itemize}
By \cite[Corollary~6.10]{SW13}, we can find an effective Cartier divisor 
$$
G \sim_{\mathbb Z} c(B +dD -d(K_X +\Delta))
$$ 
such that 
$$
\left(U, \Delta':=\Delta|_U +\frac{1}{d}E +\frac{1}{cd}G|_U \right)
$$ 
is $F$-pure along $X_\eta$. Then we have 
\begin{align*}
cd(K_U +\Delta') 
& = cd(K_U +\Delta|_U ) +cE +G|_U 
\\ & \sim_{\mathbb Z} 
cd(K_U +\Delta|_U ) +cE + cB|_U +cdD|_U -cd(K_U +\Delta|_U)
\\ & =c(B|_U +dD|_U +E), 
\end{align*}
and so 
$$
h_* \mathcal O_U(cdm(K_U +\Delta'))
\cong 
h_* \mathcal O_U (cm(B|_U +dD|_U +E))
$$
for each $m > 0$. 
Since $B|_{X_\eta}$ is ample and $D|_{X_\eta}$ and $E|_{X_\eta}$ are nef, 
$(K_U +\Delta')|_{X_\eta}$ is ample. 
Hence, the assertion follows from Proposition~\ref{prop:genglgen}.\eqref{itm:genglgen:generated}. 
\end{proof}
\begin{thm} \label{thm:weak pos}
Consider the following varieties and morphisms:
\begin{itemize}
\item Let $(X, \Delta)$ be a  projective pair with $\Delta$ a $\mathbb Z_{(p)}$-divisor, 
\item Let $f \colon X \to Y$ be a surjective morphism to  a normal projective variety of maximal Albanese dimension, such that the generic fiber $\left(X_{\eta}, \Delta|_{X_\eta}\right)$ is strongly $F$-regular. 
\item Let $V$ be an open subset of $Y$ such that 
\begin{itemize}
\item[$\circ$] $Y\setminus V$ is of codimension at least two, 
\item[$\circ$] $V$ is regular, and  
\item[$\circ$] $h:=f|_U:U\to V$ is flat, where $U:=f^{-1}(V)$. 
\end{itemize}
\item Assume either that 
\begin{enumerate}
\item \label{itm:weak_pos:separable_fiber} $f$ is separable, or 
\item \label{itm:weak_pos:separable_over_image} the Albanese morphism $a_Y:Y \to A_Y$ of $Y$ is separable over its image. 
\end{enumerate}
\end{itemize}
Additionally, consider the  following divisors and properties of divisors on the above varieties:
\begin{itemize}
\item Let $D$ be a $\mathbb Z_{(p)}$-Weil divisor on $X$ such that 
\begin{itemize}
\item[$\circ$] $D-(K_X+\Delta)$ is a nef $\mathbb Z_{(p)}$-Cartier divisor, 
\item[$\circ$] $D\ge 0$, and 
\item[$\circ$] $D|_{X_\eta} =0$. 
\end{itemize}
\item Put $U_0 := U\setminus \mathrm{Supp}(D)$. 
\end{itemize}
Then $\mathcal O_{U_0}(-h^*K_V|_{U_0})$ is weakly positive. 
\end{thm}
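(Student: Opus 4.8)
The goal is to show that $\sO_{U_0}(-h^*K_V|_{U_0})$ is weakly positive, and the engine for this will be the ``magic map'' $m_e$ from the outline combined with the generic global generation result of Proposition~\ref{prop:B+E}. I would begin by fixing a globally generated ample divisor $B$ on $X$, a symmetric ample $H'$ on the Albanese $A_Y$ with free linear system, and let $H$ be the pullback of $(\dim V+1)H'$ to $V$; by Definition~\ref{defn:wp} it suffices to produce, for each $\alpha\ge 1$, a $\beta\ge 1$ such that $\big(\mathrm{Sym}^{\alpha\beta}\sO_{U_0}(-h^*K_V|_{U_0})\big)^{**}$ twisted by $\beta$ copies of a fixed ample is generically globally generated. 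Since $-h^*K_V$ is a line bundle on the regular locus, this reduces to showing that for every integer $a>0$ the divisor $B|_{U_0}-a\,h^*K_V|_{U_0}$ (or rather its pushforward twisted by $H$) is generically globally generated, i.e. carries enough sections. The heart is therefore the inductive step: \emph{if $h_*\sO_{U}(cm(B-a\,h^*K_Y+E))\otimes\sO_V(H)$ is generically globally generated for a suitable effective $E$ supported away from $X_\eta$, then the same holds with $a$ replaced by $a+\tfrac{1}{p^r}$}, asymptotically in $m$.

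The mechanism for the inductive step is the non-zero map $m_e$ from the outline. I would set up the diagram of Notation~\ref{notation:foliation} with $g=f$ (pulling back over $V$), choose $r$ minimal so that the generic fiber of $\alpha_r$ is geometrically normal, and form $m_e$ as in the excerpt. The computation \eqref{eq:outline:m_e_final} gives a non-zero $\sO_{W_e}$-linear map whose source involves $(1-p^e)(K_X+\Delta)$ and whose target involves $h^*(1-p^{e-r})K_Y$ plus the correction term $F_X^{e-r,*}\gamma_r^*K_{W_r/X}$. The key structural input, to be extracted from Lemma~\ref{lem:foliation} and the foliation canonical bundle formula, is that this correction term is \emph{anti-effective}: $K_{W_i/W_{i-1}^1}$ is minus the determinant of the saturated image of $\alpha_i^{1,*}\Omega_Y\to\Omega_{W_i^1}$, which is anti-effective since $\sO_A^{\oplus\dim A}|_Y\cong\Omega_A|_Y\to\Omega_Y$ is generically surjective (here the hypothesis that either $f$ is separable, or $a_Y$ is separable over its image, is what makes $\Omega_Y$ generically globally generated). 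Next I would twist $m_e$ by $B$ and by $-a'h^*K_Y$ for the relevant $a'$, push forward by $h$ (equivalently by $\alpha_e$ composed appropriately), and observe: the source of the pushed-forward twisted map is, up to the above identifications, a sheaf of the form $h_*\sO_U(cm(B-a\,h^*K_Y+E))\otimes(\cdots)$ which by the inductive hypothesis plus Proposition~\ref{prop:B+E} is generically globally generated; since $m_e$ is non-zero and the target is torsion-free, the image has a non-zero generically generating space of sections; unwinding the divisor bookkeeping this says precisely that $B-(a+\tfrac{1}{p^r})h^*K_Y$ (twisted by $H$, pushed forward) is generically globally generated.

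Running this induction and letting $a\to\infty$ shows $B|_{U_0}-a\,h^*K_V|_{U_0}$ is pseudo-effective-in-sections for all $a$, which after the standard reorganization of Definition~\ref{defn:wp} (replacing $B$ by its multiples and absorbing into the $\mathcal H^\beta$ slack, using that $B$ was globally generated ample) yields weak positivity of $\sO_{U_0}(-h^*K_V|_{U_0})$. Throughout one must be careful that all the divisors stay $\mathbb Z_{(p)}$-Cartier so the clearing-denominators constants $c,d$ of Proposition~\ref{prop:B+E} behave, and that the role of $D$ and of $U_0:=U\setminus\mathrm{Supp}(D)$ is exactly to make the effective twist $E$ (arising from $D-(K_X+\Delta)$ nef and from $\gamma_r^*K_{W_r/X}$) compatible with the hypotheses of Proposition~\ref{prop:B+E}, namely $E\ge 0$ and $E|_{X_\eta}=0$.

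\textbf{Main obstacle.} The serious difficulty is point~\eqref{itm:outline:limiting} of the outline: in the inductive step the twisting sheaf $-a'h^*K_Y$ is not nef, so one cannot argue naively on $H^0$; instead one must push forward the twisted $m_e$ and then invoke Proposition~\ref{prop:B+E} to know the \emph{source} is generically globally generated, and this forces a delicate choice of which divisor plays the role of ``$D$'' in Proposition~\ref{prop:B+E} at each stage, together with uniform control (independent of $e$ and of the induction index $a$) of the generic global generation threshold $m_0$ and of the ample twist $H$. Managing this bookkeeping — keeping the correction term $F_X^{e-r,*}\gamma_r^*K_{W_r/X}$ absorbed into an effective $E$ with $E|_{X_\eta}=0$ while simultaneously letting $e\to\infty$ and $a\to\infty$ — is where essentially all the work lies, and is presumably the content of Steps~\ref{step:F}, \ref{step:negative} and~\ref{step:T} of the actual proof.
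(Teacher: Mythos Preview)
Your plan follows the paper's approach closely and correctly identifies all the main ingredients: the magic map $m_e$, the foliation computation showing $\gamma_r^*K_{W_r/U}\le 0$, the appeal to Proposition~\ref{prop:B+E}, and the limiting/iterative argument. However, two aspects of your sketch are imprecise in ways that matter for the actual execution.

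First, your inductive hypothesis is stated as generic global generation of a pushforward, but the paper iterates instead on \emph{effectivity}: it defines the set
\[
T=\Big\{t\in\mathbb Z_{(p)}\ :\ B|_U+\lambda_t D|_U - t\,h^*K_V\sim_{\mathbb Z_{(p)}} E_t\ge 0\ \text{for some }\lambda_t,\,E_t\Big\}
\]
and shows $T$ is unbounded above. For each $t\in T$, the witness $E_t$ is fed into Proposition~\ref{prop:B+E} as the effective divisor $E$ there (this is precisely where $E_t|_{X_\eta}=0$ is used), which then yields generic global generation of the source of the pushed-forward magic map; the non-zero image then produces a new effective divisor, placing some $t'>t$ into $T$. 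So generic global generation is an intermediate tool within each step, not the quantity being iterated. Your formulation conflates these two roles, and also conflates the effective witness $E_t$ with the foliation correction $\gamma_r^*K_{W_r/U}$ --- the latter is simply discarded (being $\le 0$), not absorbed into $E$.

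Second, you are missing the auxiliary parameter $\delta\in\mathbb Z_{(p)}\cap(0,1)$, which is essential. The paper splits $p^eB$ as $B + (p^e-1)\delta B + (p^e-1)(1-\delta)B$: the piece $(p^e-1)\delta B$ combines with $D-(K_X+\Delta)$ to form the ample $B_\delta$ required by Proposition~\ref{prop:B+E}, while the piece $(p^e-1)(1-\delta)B$ pairs with $(1-\delta)E_t$ to supply the $-t\,h^*K_V$ contribution. The resulting new element of $T$ is $\dfrac{t(p^e-1)(1-\delta)+1-p^{e-r}}{p^e+1}$, which tends to $t+p^{-r}$ only after the double limit $e\to\infty$, $\delta\to 0$. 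Without $\delta$ you cannot simultaneously keep the input to Proposition~\ref{prop:B+E} ample and extract the full increment $p^{-r}$. Finally, the non-vanishing of the pushed-forward twisted map (your ``since $m_e$ is non-zero and the target is torsion-free'') is not automatic after pushforward by $\alpha_e$; the paper devotes a separate step to this, using Fujita vanishing and Castelnuovo--Mumford regularity to show the source is globally generated on $W_e$, so that $H^0$ of the map is already non-zero before pushing forward.
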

We have the following commutative diagram: 
$$
\xymatrix{
	U_0 \ar@{^(->}[r] & U \ar@{^(->}[r] \ar[d]_-h & X \ar[d]^-f \\
			  & V \ar@{^(->}[r] & Y
}
$$
\begin{proof}
The proof is divided into multiple steps. Each setup starts either with a setup, where we define the necessary notation, or with the precise statement of the goal of the step. In the former case, we also state the precise goal, right after the setup. 
\begin{step} \label{step:F} 
\ul{Setup:} Let $H$ be a Cartier divisor on $V$ 
defined as in Proposition~\ref{prop:genglgen}. 
Take an ample Cartier divisor $B$ on $X$ such that 
\begin{itemize}
\item $B=(\dim X +1)B'$ for some very ample Cartier divisor $B'$ on $X$, 
\item $f_* \mathcal O_X(B)$ is globally generated, 
\item $h^* H \le B|_U$, and 
\item the natural morphism 
$$
f_* \mathcal O_X (B) \otimes f_* \mathcal O_X(B+N) 
\to f_* \mathcal O_X(2B +N)
$$ 
is generically surjective for every Cartier divisor $N$ on $X$ such that 
$N|_{X_\eta}$ is nef. 
\end{itemize}
We can find such a $B$ by using Keeler's relative Fujita vanishing theorem \cite[Theorem~1.5]{Kee03} and the relative Castelnuovo--Mumford regularity \cite[Example~1.8.24]{Laz04I}. 
Note that if $B$ holds the above conditions, 
then so does $lB$ for each $l\in\mathbb Z_{>0}$. 
Set 
\begin{align*}
T := \left\{ t \in \mathbb Z_{(p)} \,\middle| \,
\begin{tabular}{c}
\textup{there is a $\lambda_t\in \mathbb Z_{>0}$ and a $\mathbb Z_{(p)}$-Weil divisor $E_t\ge 0$ on $U$} \\
\textup{such that $B|_U +\lambda_tD|_U -t h^*K_V \sim_{\mathbb Z_{(p)}} E_t$}
\end{tabular}
\right\}. 
\end{align*}
Then $0\in T \ne \emptyset$. 
Fix a $t\in T$. 
Then we have $\lambda_t\in\mathbb Z_{>0}$ and 
$$
0 \le E_t \sim_{\mathbb Z_{(p)}} B|_U +\lambda_tD|_U -th^*K_V.
$$ 
Take a $\delta\in \mathbb Z_{(p)} \cap (0, 1)$. 
Set $B_\delta:= \delta B +D -(K_X +\Delta)$. 
Then $B_\delta$ is an ample $\mathbb Z_{(p)}$-Cartier divisor. 
Let $e_{\delta,t}>0$ be an integer such that 
\begin{itemize}
\item 
$
(p^{e_{\delta,t}}-1)(1-\delta)t,~ 
(p^{e_{\delta,t}}-1)(1-\delta)\lambda_t, \in \mathbb Z,
$ 
\item $(p^{e_{\delta,t}}-1)(1-\delta)E_t$ and $(p^{e_{\delta,t}}-1)(K_X+\Delta)$ 
are integral, 
\item $(p^{e_{\delta,t}}-1)B_\delta$ is Cartier, and 
\item 
$
(p^{e_{\delta,t}}-1)(1-\delta)(B|_U+\lambda_tD|_U-th^*K_V) \sim_{\mathbb Z} (p^{e_{\delta,t}}-1)(1-\delta)E_t.
$ 
\end{itemize}
Note that the same conditions hold for $e\in \mathbb Z_{>0}$ with $e_{\delta,t}|e$. 
By Proposition~\ref{prop:B+E}, we may assume that 
there is a $d_{\delta,t}\in \mathbb Z_{>0}$ such that 
$$
h_*\mathcal O_U\big( 
(p^e-1)(B_\delta|_U +d_{\delta,t}D|_U+(1-\delta)E_t)
\big) \otimes \mathcal O_V(H)
$$
is generically globally generated for each $e\in\mathbb Z_{>0}$ 
with $e_{\delta,t}|e$. 

\ul{Goal of this step:} we prove that 
$$
\mathcal F_{e, \delta, t}
:=h_*\mathcal O_U\big(
(1-p^e)(K_U+\Delta|_U) +p^eB|_U +\mu_eD|_U
+h^* \left( -\nu_et K_V +H \right)
\big) 
$$
is generically generated by its global sections 
for $e\gg0$ with $e_{\delta,t}|e$, where 
$$
\mu_e:=p^e(1 +d_{\delta,t} +\lambda_t) 
\quad \textup{and} \quad 
\nu_e:=(p^e-1)(1-\delta). 
$$ 
Note that $\nu_e t\in \mathbb Z$. 
We now have 
\begin{align*} 
& (1-p^e)(K_U+\Delta|_U) +p^e B|_U +\mu_eD|_U
+ h^* \left( -\nu_et K_V +H \right)
\\ & \ge (1-p^e)(K_U+\Delta|_U) +p^e B|_U +(p^e-1)(1+d_{\delta,t}+(1-\delta)\lambda_t) D|_U
\\ & \hspace{300pt} + h^* \left( -\nu_et K_V +H \right)
\\ & = B|_U +(p^e-1)(\delta B|_U +D|_U-(K_U+\Delta|_U)) 
+(p^e-1)d_{\delta,t}D|_U
\\ & \hspace{230pt} +\nu_e(B|_U +\lambda_tD|_U -th^*K_V) +h^* H
\\ & \hspace{270pt} \textup{\footnotesize{(note that $p^e=1+(p^e-1)\delta + \nu_e$)}}
\\ & \sim_{\mathbb Z} B|_U +(p^e-1)B_\delta|_U +(p^e-1)d_{\delta,t}D|_U 
+\nu_eE_t +h^* H
\\ & = B|_U +(p^e-1)(B_\delta|_U +d_{\delta,t}D|_U +(1-\delta)E_t) +h^* H, 
\end{align*}
so there is the injective morphism 
$$
\mathcal F'_{e,\delta,t}:=
h_*\mathcal O_U\big( 
B|_U +(p^e-1)(B_\delta|_U +d_{\delta,t}D|_U +(1-\delta)E_t) +h^*H)
\big) 
\hookrightarrow 
\mathcal F_{e, \delta,t}, 
$$
which is generically isomorphic as $D|_{X_\eta}=0$. 
It is enough to show that 
$\mathcal F'_{e,\delta,t}$ is generically globally generated. 
Consider the following morphisms:
\begin{align*}
& (f_* \mathcal O_X(B))|_V \otimes h_* \mathcal O_U\big((p^e-1)(B_\delta|_U +d_{\delta,t}D|_U +(1-\delta)E_t) \big) \otimes \mathcal O_V(H) 
\\ \cong & h_* \mathcal O_X(B|_U) \otimes h_* \mathcal O_U\big((p^e-1)(B_\delta|_U +d_{\delta,t}D|_U +(1-\delta)E_t) +h^* H \big) 
\\ \xrightarrow{\varphi} & h_* \mathcal O_U\big(B|_U +(p^e-1)(B_\delta|_U +d_{\delta,t}D|_U +(1-\delta)E_t) +h^* H \big)
=\mathcal F'_{e,\delta,t}. 
\end{align*}
Here, the first isomorphism follows from the projection formula, 
and $\varphi$ is the natural morphism. 
As mentioned above, 
$$
h_* \mathcal O_U\big( 
(p^e-1)(B_\delta|_U +d_{\delta,t}D|_U +(1-\delta)E_t) 
\big) \otimes \mathcal O_V(H) 
$$
is generically generated by its global sections. 
Also, 
$
(f_*\mathcal O_X(B))|_V
$
is globally generated by the choice of $B$. 
Therefore, it is enough to show that $\varphi$ is generically surjective. 
We now have 
\begin{align*}
& (p^e-1)(B_\delta|_U +d_{\delta,t}D|_U +(1-\delta)E_t) +h^*H 
\\ \sim_{\mathbb Z} & 
(p^e-1)\big(\delta B|_U +D|_U -(K_X+\Delta)|_U +d_{\delta,t}D|_U 
+(1-\delta)(B|_U+\lambda_t D|_U -th^*K_V)\big)  
\\ & \hspace{370pt} +h^*H 
\\ = &
(p^e-1)(B +D-(K_X+\Delta))|_U +(p^e-1)(d_{\delta,t}+(1-\delta)\lambda_t)D|_U 
  \\ & \hspace{240pt} +h^*(-(p^e-1)(1-\delta) tK_V +H). 
\end{align*}
Since $D-(K_X+\Delta)$ is nef and $D|_{X_\eta}=0$, 
the generic surjectivity of $\varphi$ follows from the choice of $B$. 
\end{step}
Replacing $e_{\delta,t}$ by its multiple, we may assume that for each integer $e>0$ with $e_{\delta,t}|e$, the sheaf $\mathcal F_{e,\delta,t}$ is generically generated by its global sections. 
\begin{step} \label{step:negative}
\ul{Setup:}
Let $W_e$ be the normalization of the reduced subscheme of $U \times_V V^e$, 
and introduce the following names for the induced morphisms:
%
\begin{equation*}
\xymatrix@C=70pt@R=60pt{
U^e \ar[rd]_{\gamma_{e-1}^1} \ar[drrr]^{c_{e,s}} \ar[rrd]_{\gamma_e} \ar@/_5pc/[ddrr]_{h^e} \ar@/^3pc/[drrrr] \\
&  W_{e-1}^1 \ar[r]_{\delta_e} \ar[dr]_{\alpha_{e-1}^1} & W_e \ar[d]_{\alpha_e } \ar@/^1.5pc/[rr]^{\beta_e} \ar[r]_{b_{e,s}} & W_{s} \ar[r]_{\beta_{s}} \ar[d]_{\alpha_{s}} & U \ar[d]^h \\
 & & V^e \ar@/_1.5pc/[rr]_{F_Z^e}  \ar[r]^{F_V^{e-s}} & V^{s} \ar[r]^{F^{s}_V} & V
}
\end{equation*}
Put $W_0:=U$. Let $r$ be the minimum non-negative integer such that $\alpha_r:W_r\to V^r$ is separable. 

\ul{Goal of this step:}  we show that $\gamma_r^*K_{W_r/U} \le 0$. Note that when $f$ is separable (case~\eqref{itm:weak_pos:separable_fiber}), then $r=0$ and $\gamma_r^*K_{W_r/U}=0$. Thus we may assume that $a_Y:Y\to A_Y$ is separable over its image (case~\eqref{itm:weak_pos:separable_over_image}). 
Set 
\begin{equation*}
\sH^e=\imSat\Big( \alpha_{e-1}^{1,*} \Omega_{V^e}  \to \Omega_{W_{e-1}^1} \Big)
\end{equation*}
where $\imSat$ denotes the saturated image. Then, by Lemma \ref{lem:foliation} and by the canonical bundle formula for purely inseparable morphisms of height 1, we have 
\begin{multline*}
\delta_e^* K_{W_e/W_{e-1}} = K_{W_{e-1}^1/W_{e-1}} + (p-1) \det \left( \factormiddle{\Omega_{W_{e-1}^1}}{\sH^e} \right) 
\\ = (1-p) K_{W_{e-1}^1} + (p-1) K_{W_{e-1}^1} + (1-p) \det \sH^e
= (1-p) \det \sH^e
\end{multline*}
by Lemma~\ref{lem:foliation}. Hence, we obtain that 
\begin{equation*}
\label{eq:canonical_W_e_over_W_e_minus_1}
\gamma^*_r K_{W_r/U} = \sum_{s=1}^r F_U^{r-s,*} \gamma^{1,*}_{s-1} (1-p) \det \sH^s. 
\end{equation*}
Since $Y\to a_Y(Y)$ is separable, the homomorphism 
\begin{equation*}
d(a_Y) : \sO_Y^{\oplus \dim A} \cong a_Y^* \Omega_A \to \Omega_Y
\end{equation*}
is generically surjective. It follows then that $\sH^e$ is generically globally generated, so $\det \sH^e \geq 0$, and hence $\gamma_r^*K_{W_r/U} \le 0$. 
\end{step}
\begin{step} \label{step:T}
\ul{Goal of the step:} we show that the set $T$ defined in Step \ref{step:F} is not bounded from above. 
For each $e\ge r$, the scheme $W_r\times_{V^r}V^e$ is reduced by the choice of $r$, so we have $K_{W_e} \le d_e^*K_{W_r\times_{V^r}V^e}$, where $d_e:W_e\to W_r\times_{V^r}V^e$ is the induced morphism.
Then 
\begin{align*}
K_{W_e/U}
\le d_e^*K_{W_r\times_{V^r}V^e/U} 
& = d_e^*K_{W_r\times_{V^r}V^e/W_r} +b_{e,r}^*K_{W_r/U}
\\ & = \alpha_e^*K_{V^e/V^r} +b_{e,r}^*K_{W_r/U} 
= \alpha_e^*(1-p^{e-r})K_V +b_{e,r}^*K_{W_r/U}, 
\end{align*}
and so by Step \ref{step:negative} we have
\begin{align*} \tag{$\ast$} \label{ineq:1}
\gamma_e^*K_{W_e/U}
\le h^{e,*}(1-p^{e-r})K_V +F_U^{e-r,*}\gamma_r^*K_{W_r/U}
\le h^{e,*}(1-p^{e-r})K_V. 
\end{align*}
Fix a $t\in T$, a $\delta \in \mathbb Z_{(p)} \cap (0,1)$ and an integer $e\gg r$ with $e_{\delta,t}|e$. 
Applying $\mathcal Hom((?),\mathcal O_{W_e})$ to the composite of 
$$
\sO_{W_e} 
\to \gamma_{e,*}\sO_{U^e} 
\hookrightarrow \gamma_{e,*}\sO_{U^e}((1-p^e)\Delta|_U), 
$$
we obtain the generically surjective morphism 
$$
\gamma_{e,*}\sO_{U^e}((1-p^e)\Delta|_U +K_{U^e/W_e}) \to \sO_{W_e}
$$
by the Grothendieck duality. 
Combining this and $\gamma_e^\sharp:\sO_{W_e}\to \gamma_{e,*}\sO_{U^e}$, we get the  non-zero homomorphism 
$$
\gamma_{e,*}\sO_{U^e}((1-p^e)\Delta|_U +K_{U^e/W_e}) 
\to \gamma_{e,*}\sO_{U^e}.
$$
Taking reflexive tensor product with $\sO_{W_e}(K_{W_e/U})$, we obtain 
$$
\gamma_{e,*}\sO_{U^e}((1-p^e)(K_U+\Delta|_U)) 
\to \gamma_{e,*}\sO_{U^e}(\gamma_e^*K_{W_e/U}) 
\to \gamma_{e,*}\sO_{U^e}(h^{e,*}(1-p^{e-r})K_Z), 
$$
where the second homomorphism uses \eqref{ineq:1}. Let $u_e$ denote the composite of the above homomorphisms. 
Put 
$$
M_e:=\beta_e^*(B|_U +(1+d_{\delta,t}+\lambda_t)D|_U) 
+\alpha_e^*\left( t(1-p^e)(1-\delta)K_V +H \right). 
$$
Applying the functor 
$
\alpha_{e,*} \Big( \left( 
(?) \otimes \mathcal O_{W_e}\left( M_e \right) 
\right)^{**} \Big)
$ 
to $u_e$, we get the homomorphism 
\begin{align*}
v_e: h^e_* \mathcal O_{U^e}\left( 
(1-p^e)(K_U+\Delta|_U) +\gamma_e^* M_e \right)
\to 
h^e_* \mathcal O_{U^e} \left(h^{e,*}(1-p^{e-r})K_Z + \gamma_e^* M_e \right)
\end{align*}
by the projection formula. 
We will show that $v_e\ne 0$ in Step~\ref{step:v}. 
We assume that $v_e\ne 0$. 
Since 
\begin{align*}
& (1-p^e)(K_U+\Delta|_U) +\gamma_e^*M_e 
\\ & = (1-p^e)(K_U+\Delta|_U) +p^e(B|_U +(1+d_{\delta,t}+\lambda_t)D|_U) 
\\ & \hspace{200pt} +h^{e,*}\left( t(1-p^e)(1-\delta)K_V +H \right)
\\ & = (1-p^e)(K_U+\Delta|_U) +p^eB|_U +\mu_eD|_U 
+h^{e,*}\left( -\nu_et K_V  +H \right), 
\end{align*}
we see that the source of $v_e$ is equal to $\mathcal F_{e,\delta,t}$, 
which is generically globally generated by Step~\ref{step:F}, 
and hence so is $\mathrm{Im}\left(v_e\right) \ne 0$. 
Therefore, we obtain that 
$$
H^0\left(U^e, h^{e,*}(1-p^{e-r})K_Z +\gamma_e^*M_e \right)
\ne 0, 
$$ 
so we find an effective Cartier divisor 
$E\sim_{\mathbb Z} h^{e,*}(1-p^{e-r})K_Z +\gamma_e^* M_e$. 
Since $h^* H \le B|_U$ by the choice of $B$, we have 
\begin{align*}
0 \le E & \sim_{\mathbb Z} h^{e,*}(1-p^{e-r})K_Z +\gamma_e^*M_e
\\ & = p^e(B|_U +(1+d_{\delta,t}+\lambda_t)D|_U) 
+h^{e,*} \big((t(1-p^e)(1-\delta) +1-p^{e-r})K_V +H \big) 
\\ & \le (p^e+1)B|_U +\mu_eD|_U +h^{e,*}\big((t(1-p^e)(1-\delta) +1-p^{e-r})K_V \big), 
\end{align*}
so we get that 
$$
B|_U + \left\lceil \frac{\mu_e}{p^e+1} \right\rceil D|_U 
+\frac{t(p^e-1)(1-\delta) +1-p^{e-r}}{p^e+1} h^* K_V
\sim_{\mathbb Z_{(p)}} E' \ge 0
$$
for a $\mathbb Z_{(p)}$-Cartier divisor $E'\ge0$, 
which means that 
$$
T \ni \frac{t(p^e-1)(1-\delta) +1-p^{e-r}}{p^e+1} 
\xrightarrow{e\to +\infty,~\delta\to 0} 
t+p^{-r}. 
$$
Hence, $T$ cannot be bounded from above. 
\end{step}
\begin{step} \label{step:v}
\ul{Goal of the step:} we prove that $v_e\ne 0$ when $e\gg0$. 
Since $D|_{X_\eta}=0$, we can ignore $D$, so we may assume that $D=0$. 
Note that $v_e\ne 0$ if and only if $v_e\ne 0$ at the generic point, as the target is torsion-free. 
By the same reason, we assume that $K_V=H=0$. Then $M_e=\beta_e^*B|_U$. 
Now, $v_e$ is the push-forward by $\alpha_e$ of the morphism 
$$
u'_e: \gamma_{e,*}\sO_{U^e}((1-p^e)(K_U+\Delta|_U) +p^eB|_U) 
\to \gamma_{e,*}\sO_{U^e}(p^eB|_U). 
$$
We show that $H^0(u'_e)$ is non-zero, which means that $v_e=\alpha_{e,*}u'_e\ne 0$. To show that, we prove that the source of $u'_e$ is globally generated. Note that $u'_e\ne 0$ by the construction. 
We first consider the case when $U=X$ (i.e., $V=Y$).
Recall that $B=(\dim X+1)B'$ for a very ample Cartier divisor $B'$ on $X$. 
Since $\gamma_e$ is finite, for each $1\le i \le \dim X$, we have 
\begin{align*}
& H^i\big(W_e, \gamma_{e,*}\sO_{X^e}((1-p^e)(K_X+\Delta) +p^eB)\otimes \sO_{W_e}(-i\beta_e^*B')\big)
\\ \cong & H^i\big(W_e, \gamma_{e,*}\sO_{X^e}((1-p^e)(K_X+\Delta) +p^e(\dim X+1-i)B')\big)
\\ \cong & H^i\big(X_e, \sO_{X^e}((1-p^e)(K_X+\Delta) +p^e(\dim X +1 -i)B')\big)
\\ = & 0
\end{align*}
by Fujita's vanishing theorem if $e\gg0$. 
Hence, our claim follows from the Castelnuovo--Mumford regularity~\cite[Theorem~1.8.5]{Laz04I}. 
When $U\ne X$, we can prove the claim by considering the same argument with replacing $W_e$ by the normalization of the reduced part of $X\times_Y Y^e$, which is projective. 
\end{step}
\begin{step} \label{step:last} 
\ul{Goal of the step:} we conclude the proof of the theorem.  
Put 
$$
T_0:=\left\{t\in\mathbb Q \middle| 
\textup{$-h^*K_V|_{U_0} +tB|_{U_0} \sim_{\mathbb Q} E$ for a $\mathbb Q$-Weil divisor $E\ge 0$ on $U_0$}. 
\right\}
$$
Then the assertion is equivalent to that 
$\mathbb Q_{>0} \subseteq T_0$. 
By Step~\ref{step:T}, one can easily check that 
$\mathbb Z_{(p)}\cap [0,\infty) \subseteq T$. 
By definition, if $0<t\in T$, then $t^{-1}\in T_0$. 
Hence, we see that $\mathbb Q_{>0}\subseteq T_0$. 
\end{step}
\end{proof}
\section{Proof of Theorem~\ref{thm:main}}
Next, we prove the main theorem of this paper.
\begin{proof}[Proof of Theorem~\ref{thm:main}]
First, we prove \eqref{itm:main:Albanese_surjective}. Let $Y$ be the normalization of $a(X)$ and let $f:X\to Y$ be the induced morphism. 
Put $D=0$. 
Let $V$, $U$, $h:U\to V$ be as in Theorem~\ref{thm:weak pos}. 
Then by Theorem~\ref{thm:weak pos}, we see that 
$\mathcal O_U(-h^*K_V)$ is weakly positive. 
By \cite[Lemma~2.4~(2)]{EG19}, 
we obtain that $\omega_V^{-1}$ is weakly positive, 
which is equivalent to that $\omega_Y^*$ is weakly positive.
Hence, Proposition~\ref{prop:main} tells us that $Y$ is an abelian variety, 
i.e. $a$ is surjective. 

We prove \eqref{itm:main:Stein}. Let $X\xrightarrow{f'}Y'\xrightarrow{g}A$ be the Stein factorization of $a$. Let $Y$ be the normalization of $A$ in the separable closure of $K(Y')/K(A)$. Let $D$, $V$, $U$ and $h:U\to V$ be as in the proof of \eqref{itm:main:Albanese_surjective}. Then by the same argument as that of the proof of \eqref{itm:main:Albanese_surjective}, we see that $Y$ is an abelian variety, so $Y=A$ and $g$ is purely inseparable. 
By an argument similar to that of the proof of \eqref{itm:main:Albanese_surjective}, we can prove \eqref{itm:main:fiber_space}.

Next, we show \eqref{itm:main:a_exc}. 
Suppose that there is a prime divisor $E$ on $X$ such that 
the codimension of $a(\mathrm{Supp}(E))$ is at least two. 
Let $A'$ be the normalization of the flattening of $a$. 
Let $X'$ be the normalization of the main component of $X\times_A A'$. 
We have the following commutative diagram: 
$$
\xymatrix{ 
	X' \ar[r]^-\sigma \ar[d]_-{a'} & X \ar[d]^-a \\
	A' \ar[r]_-\tau & A.
}
$$
Since $A$ is smooth and $K_A=0$, we have $K_{A'}\ge0$ and 
${a'}^*K_{A'} \ge \sigma^{-1}_*E$. 
Note that, since $a'$ is equi-dimensional,  
the pullback by $a'$ of a Weil divisor on $A'$ can be defined. 
Thus $\sigma_*{a'}^*K_{A'} \ge E$. 
We prove that $\mathcal O_X\left(-\sigma_*{a'}^*K_{A'}\right)$ is 
weakly positive. If this holds, then $\mathcal O_X(-E)$ is also weakly positive, which means that $E=0$ by \cite[Lemma~4.4]{Eji19w}, a contradiction. 
Since $K_X+\Delta$ is $\mathbb Z_{(p)}$-Cartier and $\sigma$ is isomorphic 
on the generic fiber $X'_\eta$ of $a'$, 
there are effective $\mathbb Z_{(p)}$-Weil divisor 
$\Delta'$ and $D$ on $X'$ such that 
$$
K_{X'} + \Delta' = \sigma^*(K_X+\Delta) +D, 
$$
that $D$ is $\sigma$-exceptional, and that $D|_{X'_\eta}=0$. 
Then 
\begin{itemize}
\item $\left(X'_\eta,\Delta'|_{X'_\eta}\right)$ is strongly $F$-regular, and 
\item $D-(K_{X'}+\Delta') = -\sigma^*(K_X+\Delta)$ is 
a nef $\mathbb Z_{(p)}$-Cartier divisor. 
\end{itemize}
Let $V'$ be a regular open subset of $A'$ such that 
$\mathrm{codim}(A'\setminus V') \ge 2$ 
and $h':=a'|_{U'}:U'\to V'$ is flat, where $U':={a'}^{-1}(V')$. 
Put $U'_0:=U'\setminus \mathrm{Supp}(D)$. 
We have the following commutative diagram: 
$$
\xymatrix{
	U'_0 \ar@{^(->}[r] & U' \ar@{^(->}[r] \ar[d]_-{h'} & X' \ar[d]^-{a'} \\
			  & V' \ar@{^(->}[r] & A'
}
$$
By Theorem~\ref{thm:weak pos}, we obtain that 
$\mathcal O_{U'_0}\left(-{h'}^*K_{V'}|_{U'_0}\right)$ is weakly positive. 
Since $a'$ is equi-dimensional and $D$ is $\sigma$-exceptional divisor, 
we see that $\sigma(X'\setminus U'_0)$ is of codimension at least two. Set $U=X 
\setminus \sigma(X'\setminus U'_0)$. Then $U \subseteq X$  is an open subset such that $U^\flat_0:=\sigma^{-1}U \subseteq U_0'$,
$\mathrm{codim}(X\setminus U)\ge 2$ and $\sigma^\flat:=\sigma|_{U^\flat_0}: U^\flat_0 \to U$ is 
an isomorphism. In particular,
 $\mathcal O_{U^\flat_0}\left(-{h'}^*K_{V'}|_{U^\flat_0}\right)$ 
is weakly positive, and hence
$$
\sigma^\flat_*\mathcal O_{U^\flat_0}\left(-{h'}^*K_{V'}|_{U^\flat_0}\right)
\cong \mathcal O_U \left(-\sigma^\flat_*{h'}^*K_{V'}|_{U^\flat_0}\right)
\cong \mathcal O_X\left(-\sigma_*{a'}^*K_{A'}\right) |_U
$$
is also weakly positive. This means that 
$
\mathcal O_X\left(-\sigma_*{a'}^*K_{A'}\right) 
$
is weakly positive, since the weak positivity is determined on any open subset 
whose complement has codimension at least two. 
\end{proof}
\begin{rem} \label{rem:alb}
Theorem~\ref{thm:main} has already been proved under the assumption that 
the pair $(X_{\overline\eta},\Delta|_{X_{\overline\eta}})$ is $F$-pure, 
where $X_{\overline\eta}$ is the geometric generic fiber of $X \to \mathrm{Im}(a)$ (\cite[Theorem~1.1]{Eji19w}), 
but this assumption does not always hold.  
For instance, if $X$ is a quasi-hyperelliptic surface (cf. \cite[\S 2]{BM3}), 
then $-K_X$ is nef and $a$ forms an algebraic fiber space, 
but $X_{\overline\eta}$ has a cusp, which is not $F$-pure (cf. \cite[Theorem~1.1]{GW77})

Additionally, $X_{\overline\eta}$ can even be non-reduced, examples of which can be found in \cite[Section 14]{PZ19}.
\end{rem}
\begin{proof}[Proof of Corollary \ref{cor:characterization_abelian}]
Point \eqref{itm:characterization_abelian:inequality} follows directly from Theorem \ref{thm:main_smooth}, taking into account that $2(\dim A) = b_1$ \cite[Prop 2.6]{PZ19}.

For point \eqref{itm:characterization_abelian:characterization},  if $X$ is an abelian variety, then the Albanese morphism is the identity, and  hence $2(\dim X) = 2 (\dim A)  = b_1$. Therefore, it is enough to prove the opposite direction, for which assume that $b_1= 2 \dim X$. By using $2(\dim A) = b_1$ again, we obtain that $\dim X = \dim A$, and then $a: X \to A$ is generically finite  by Theorem \ref{thm:main_smooth}. However, then Theorem \ref{prop:main} concludes that $X$ is an abelian variety. 
\end{proof}
\begin{proof}[Proof of Corollary~\ref{cor:surface}]
Let $X\xrightarrow{f} Y \to A$ be the Stein factorization of $a$. 
If $\dim Y=0$, then $Y=A$, so the claim follows. 
If $\dim Y=1$, then $f:X\to Y$ is separable, so the assertion follows from 
Theorem~\ref{thm:main}. 
If $\dim Y=2$, then $X$ is of maximal Albanese dimension, 
so Proposition~\ref{prop:main} completes the proof. 
\end{proof}
\begin{proof}[Proof of Corollary~\ref{cor:K=0}]
Let $X\xrightarrow{f}Y\to A$ be the Stein factorization of $a$. 
Since $p \ge 5$ and $K_X$ is numerically trivial, by \cite[Corollary~1.8]{PW22}, 
we see that the general fiber of $f$ is smooth.  In particular, $f$ is separable. 
Hence, the assertion follows from Theorem~\ref{thm:main}. 
\end{proof}
\section{Algebraic fiber spaces over elliptic curves}
\begin{thm}[\textup{\cite[Theorem~3.1]{PZ19}}] \label{thm:nefness}
Let $(X,\Delta)$ be a projective pair
such that $K_X+\Delta$ is $\mathbb Z_{(p)}$-Cartier, let
$f:X\to Y$ be a surjective morphism to a normal projective variety $Y$, and let
Let $L$ be a $\mathbb Q$-Cartier divisor on $X$. 
Suppose that 
\begin{itemize}
\item $(X,\Delta)$ is $F$-pure, 
\item $L$ is nef and $K_X+\Delta+L$ is $f$-nef, and 
\item $Y$ is of maximal Albanese dimension. 	
\end{itemize}
Then $K_X+\Delta+L$ is pseudo-effective. 
Moreover, if $\dim Y=1$, then $K_X+\Delta+L$ is nef. 
\end{thm}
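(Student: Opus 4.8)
\textbf{Proof proposal for Theorem~\ref{thm:nefness}.}

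The plan is to deduce the pseudo-effectivity of $K_X+\Delta+L$ from Theorem~\ref{thm:weak pos} by engineering a situation where the hypotheses of that theorem apply. The idea is to take $D:=K_X+\Delta+L$ after an adequate adjustment, so that the ``$-h^*K_V$ is weakly positive'' conclusion translates into a statement about $D$. More precisely, I would first reduce to the case where $K_X+\Delta+L$ is $f$-nef and $\mathbb{Z}_{(p)}$-Cartier, and then run the relative minimal model type argument implicit in the setup: over the generic fiber $X_\eta$, the divisor $(K_X+\Delta+L)|_{X_\eta}$ is nef, and combining this with $F$-purity of $(X,\Delta)$ one obtains that $(X_\eta,\Delta|_{X_\eta})$ is strongly $F$-regular (or can be made so after perturbing $\Delta$ slightly, using the ampleness built into $L$ or a small ample twist that is absorbed in the limit). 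The point of Theorem~\ref{thm:weak pos} is precisely that it produces weak positivity of $\mathcal{O}_{U_0}(-h^*K_V|_{U_0})$, and since $Y$ is of maximal Albanese dimension, combining with \cite[Lemma~2.4]{EG19} and Proposition~\ref{prop:main}-type reasoning gives positivity of $-K_Y$; pulling back and adding to the relative positivity of $K_X+\Delta+L$ over $Y$ yields pseudo-effectivity on $X$.

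In more detail, the first step is to set up the morphism: let $Y$ be as given (normal projective, maximal Albanese dimension), and choose an open $V\subseteq Y$ with complement of codimension $\ge 2$, $V$ regular, and $h:=f|_U:U\to V$ flat, exactly as required by Theorem~\ref{thm:weak pos}. The second step is to exhibit the divisor $D$ on $X$ with $D-(K_X+\Delta)$ nef $\mathbb{Z}_{(p)}$-Cartier, $D\ge 0$, and $D|_{X_\eta}=0$; here one takes $D$ to be a suitable effective divisor $\mathbb{Z}_{(p)}$-linearly equivalent to $L$ pushed appropriately (using that $L$ is nef, hence after twisting by a small ample and passing to the limit one gets the pseudo-effective threshold), arranged to be vertical over $Y$ so that $D|_{X_\eta}=0$. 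The separability hypothesis of Theorem~\ref{thm:weak pos} is supplied either by separability of $f$ or, in general, by separability of the Albanese of $Y$ over its image — and since $Y$ has maximal Albanese dimension this latter can be checked via Proposition~\ref{prop:HPZ} after reducing $h^0$ of pluricanonical sheaves, as in Proposition~\ref{prop:main}. The third step invokes Theorem~\ref{thm:weak pos} to get weak positivity of $\mathcal{O}_{U_0}(-h^*K_V|_{U_0})$, then \cite[Lemma~2.4~(2)]{EG19} to get weak positivity of $\omega_Y^{-1}$, and finally adds this back: $K_X+\Delta+L \sim_{\mathbb{Q}} (\text{something }f\text{-nef})+f^*(\text{something pseff on }Y)$, which is pseudo-effective.

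For the ``moreover'' part with $\dim Y=1$, here $Y$ is either an elliptic curve or $\mathbb{P}^1$; maximal Albanese dimension forces $Y$ to be an elliptic curve (so $K_Y=0$), and a nef $\mathbb{Q}$-divisor on $X$ which is $f$-nef over a curve, together with $K_Y=0$, should be nef directly — one argues that restricting to any curve $C\subseteq X$, either $C$ is vertical (where $f$-nefness gives $(K_X+\Delta+L)\cdot C\ge 0$) or $C$ dominates $Y$ (where one uses the pseudo-effectivity from the first part plus a limiting/cone argument, or more cleanly, the fact that pseudo-effective plus $f$-nef over a curve implies nef via a standard argument bounding the negative part). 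I expect the main obstacle to be the second step: arranging the auxiliary divisor $D$ with all three properties simultaneously ($D\ge 0$, $D-(K_X+\Delta)$ nef Cartier up to $\mathbb{Z}_{(p)}$, and $D|_{X_\eta}=0$) while not losing the $F$-purity/strong $F$-regularity of the generic fiber pair — this is a delicate perturbation argument that must be done in the limit, and getting the separability hypothesis of Theorem~\ref{thm:weak pos} to hold in the inseparable case requires the full Proposition~\ref{prop:HPZ} input. Once the setup is in place, the rest follows formally from Theorem~\ref{thm:weak pos} and \cite[Lemma~2.4]{EG19}.
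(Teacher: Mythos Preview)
Your approach has a genuine gap, and in fact the route through Theorem~\ref{thm:weak pos} cannot be made to work here.

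First, the output of Theorem~\ref{thm:weak pos} is weak positivity of $\mathcal O_{U_0}(-h^*K_V)$, a statement about the canonical divisor of the \emph{base} $Y$, not about $K_X+\Delta+L$. Even if you deduce that $-K_Y$ is pseudo-effective (or that $Y$ is abelian), the decomposition you suggest, ``$f$-nef plus pullback of something pseudo-effective on $Y$ implies pseudo-effective on $X$'', is simply false: an $f$-numerically trivial divisor need not be pseudo-effective. So the final step of your plan does not go through. Second, the auxiliary divisor $D$ required by Theorem~\ref{thm:weak pos} must satisfy $D\ge 0$ and $D|_{X_\eta}=0$. You propose to take $D$ effective and $\mathbb Z_{(p)}$-equivalent to $L$, but $L$ is only nef, so no such effective representative need exist; and nothing in the hypotheses forces $L|_{X_\eta}=0$. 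Third, Theorem~\ref{thm:weak pos} needs the generic fiber pair to be \emph{strongly $F$-regular}, whereas here you only have $F$-purity of $(X,\Delta)$; your proposed ``slight perturbation'' to upgrade $F$-pure to strongly $F$-regular is not available in general.

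The paper's proof is entirely different and more direct: it does not go through Theorem~\ref{thm:weak pos} at all, but applies Proposition~\ref{prop:genglgen} straight away. One fixes an ample $\mathbb Z_{(p)}$-Cartier $A$ on $X$, uses \cite[Corollary~6.10]{SW13} to find an effective $\Gamma\sim_{\mathbb Z_{(p)}} L+A$ with $(X,\Delta+\Gamma)$ still $F$-pure, and observes that $K_X+\Delta+\Gamma\equiv K_X+\Delta+L+A$ is now $f$-\emph{ample}. Proposition~\ref{prop:genglgen}\,\eqref{itm:genglgen:diminished} then gives $\mathbb B_-\bigl(f_*\mathcal O_X(m(K_X+\Delta+\Gamma))\bigr)\ne Y$, and a short chain of inclusions for diminished base loci (via \cite[\S4]{Eji19d}) yields $\mathbb B_-(K_X+\Delta+\Gamma)\ne X$, i.e.\ $K_X+\Delta+\Gamma$ is pseudo-effective. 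Letting $A\to 0$ gives the first claim. For $\dim Y=1$ ($Y$ is any curve of genus $\ge 1$, not necessarily elliptic), vertical curves are handled by $f$-ampleness, and any horizontal curve $C$ must meet $X\setminus\mathbb B_-(K_X+\Delta+\Gamma)$, forcing $(K_X+\Delta+\Gamma)\cdot C\ge 0$; again let $A\to 0$.
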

\begin{proof}
Let $A$ be an ample $\mathbb Z_{(p)}$-Cartier divisor on $X$. 
We show that $K_X+\Delta+L+A$ is pseudo-effective. 
Since $L+A$ is ample, by \cite[Corollary~6.10]{SW13}, 
there is an effective $\mathbb Z_{(p)}$-divisor 
$\Gamma \sim_{\mathbb Z_{(p)}} L+A$ 
such that $(X,\Delta+\Gamma)$ is $F$-pure. 
Put $\mathcal F:=f_*\mathcal O_X(m(K_X+\Delta+\Gamma))$ 
for an integer $m$ large and divisible enough. 
Since 
$$
K_X+\Delta+\Gamma \equiv K_X+\Delta+L+A
$$ 
is $f$-ample, we can apply Proposition~\ref{prop:genglgen}.\eqref{itm:genglgen:diminished} and obtain that 
$ \mathbb B_-(\mathcal F) \ne Y$. 
Let $H$ be an ample Cartier divisor on $Y$. 
We use the notation of \cite[\S 4]{Eji19d}. 
We then have the following:
\begin{align*}
\mathbb B_-(K_X+\Delta+\Gamma)
= & \mathbb B_-(m(K_X+\Delta+\Gamma))
\\ \subseteq & \mathbb B_-^{f^*H}(m(K_X+\Delta+\Gamma))
\hspace{30pt}\textup{\footnotesize{by \cite[Corollary~4.2]{Eji19d}}}
\\ \subseteq & \mathbb B_-^{f^*H}(f^*\mathcal F)
\hspace{90pt}\textup{\footnotesize{by $f^*\mathcal F \twoheadrightarrow \mathcal O_X(m(K_X+\Delta+\Gamma))$}}
\\ \subseteq & f^{-1}(\mathbb B_-^H(\mathcal F))
\ne X.
\end{align*}
Hence $K_X+\Delta+\Gamma$ is pseudo-effective. 

Next, we show that $K_X+\Delta+\Gamma$ is nef when $\dim Y=1$. 
Let $C$ be a projective curve in $X$. 
Since $K_X+\Delta+\Gamma$ is $f$-ample, 
we only need to consider the case when $f(C)=Y$. 
In this case, $C$ intersects $X\setminus B_-(K_X+\Delta+\Gamma)$ 
by the above argument, so $(K_X+\Delta+\Gamma \cdot C)\ge 0$. 
Hence $K_X+\Delta+\Gamma$ is nef, which means that so is $K_X+\Delta+L$. 
\end{proof}
\begin{notation} \label{notation:elliptic}
Let $(X, \Delta)$ be a projective pair of dimension $d+1$ 
such that 
\begin{itemize}
\item $(X,\Delta)$ is strongly $F$-regular, and 
\item $-(K_X+\Delta)$ is a nef $\mathbb Z_{(p)}$-Cartier divisor. 
\end{itemize}
Let $f:X\to T$ be a surjective morphism to an elliptic curve $T$, 
let $\tilde L$ be a very ample Cartier divisor on $X$ such that 
$R^if_*\mathcal O_X(\tilde L)=0$ for each $i>0$, and
let $G$ be a general fiber of $f$. 
Put $L:=n\tilde L -mG$, where $n:=(d+1)L^d\cdot G$ and $m:=\tilde L^{d+1}$. 
\end{notation}
\begin{thm}[\textup{cf. \cite[Theorem~5.4]{PZ19}}]
In the situation of Notation~\ref{notation:elliptic}, 
$L$ is nef. 
\end{thm}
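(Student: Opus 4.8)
The statement mirrors \cite[Theorem~5.4]{PZ19}, so I would follow that proof closely, replacing the semi-positivity input used there by the results available in the present excerpt, principally Theorem~\ref{thm:nefness} and Proposition~\ref{prop:genglgen}. The rough strategy is as follows. Since $T$ is an elliptic curve, nefness of $L$ is detected on curves mapping finitely to $T$ together with the fibers of $f$; on a general fiber $G$ we have $L|_G = n\tilde L|_G$, which is ample, so the only real issue is curves $C$ with $f(C)=T$. Thus the first step is to reduce nefness of $L$ to a statement about how $L$ behaves relative to $f$, and in particular to control $L\cdot C$ for horizontal curves.

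\textbf{Step 1: The relative setup.} I would write $L = n\tilde L - mG$ with $n=(d+1)\tilde L^d\cdot G$ and $m=\tilde L^{d+1}$ as in Notation~\ref{notation:elliptic}, and first record that $L\cdot G^{\,d}_{\mathrm{generic}} \geq 0$ type intersection identities hold; the choice of $n,m$ is exactly engineered so that $L^{d+1}=0$ (or more precisely so that the ``leading'' contribution of $\tilde L$ along the fiber direction is cancelled), which is the numerical normalization that makes $L$ a candidate for a nef but not ample class. Concretely, $L|_G = n\tilde L|_G$ is ample on $G$, so $L$ is $f$-ample, hence $f$-nef.

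\textbf{Step 2: Apply Theorem~\ref{thm:nefness}.} The divisor $K_X+\Delta+L$: here $-(K_X+\Delta)$ is nef $\mathbb Z_{(p)}$-Cartier and $L$ is $f$-ample, so $K_X+\Delta+L$ is $f$-ample, in particular $f$-nef. Since $(X,\Delta)$ is strongly $F$-regular (hence $F$-pure) and $T$ is an elliptic curve (trivially of maximal Albanese dimension with $\dim T=1$), Theorem~\ref{thm:nefness} applies and yields that $K_X+\Delta+L$ is \emph{nef}. Now one wants to pass from nefness of $K_X+\Delta+L$ to nefness of $L$ itself. The point is that $-(K_X+\Delta)$ is nef, so $L = (K_X+\Delta+L) + (-(K_X+\Delta))$ is a sum of two nef $\mathbb Z_{(p)}$-Cartier classes and hence nef. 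This is the conclusion.

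\textbf{Main obstacle.} The genuinely delicate point is verifying the hypotheses of Theorem~\ref{thm:nefness} in the exact form needed — in particular that $L$ as defined is $\mathbb Q$-Cartier (it is, being a $\mathbb Z$-combination of Cartier divisors) and that the $f$-nefness of $K_X+\Delta+L$ genuinely holds, which requires that $L|_G$ being ample survives the subtraction of the fiber class (it does, since $G\cdot G = 0$ in the relevant sense and subtracting a multiple of the fiber does not change the restriction to a fiber). If one instead wanted to reprove this along the lines of \cite{PZ19} without invoking Theorem~\ref{thm:nefness} as a black box, the hard part would be the positivity of the pushforward sheaf $f_*\mathcal O_X(m(K_X+\Delta+\Gamma))$ on the elliptic curve $T$ and the use of Oda's classification \cite{Oda71} of vector bundles on elliptic curves to rule out negative quotients — but since Theorem~\ref{thm:nefness} is already available in the excerpt, the argument above suffices and the obstacle is merely bookkeeping of the numerical normalization in Notation~\ref{notation:elliptic}.
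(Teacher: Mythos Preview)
Your Step~2 contains a circular step. Theorem~\ref{thm:nefness} has \emph{three} hypotheses on the divisor side: $(X,\Delta)$ is $F$-pure, $K_X+\Delta+L$ is $f$-nef, \emph{and} $L$ is nef. You check the first two but silently drop the third, and that third hypothesis is exactly the statement you are trying to prove. So Theorem~\ref{thm:nefness} cannot be applied with the divisor $L$ of Notation~\ref{notation:elliptic} playing the role of the nef class in that theorem.

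The paper's proof breaks this circularity as follows. One shows instead that $L+f^*A$ is nef for every ample $\mathbb Z_{(p)}$-Cartier divisor $A$ on $T$; letting $A\to 0$ gives the claim. The point of the twist is that $L+f^*A$ then has an effective member $\Gamma\sim_{\mathbb Z_{(p)}} L+f^*A$ (this is where the argument of \cite[Proof of Theorem~5.4]{PZ19} is invoked). Choosing $\varepsilon\in\mathbb Z_{(p)}\cap(0,1)$ small enough that $(X,\Delta+\varepsilon\Gamma)$ is $F$-pure, one writes
\[
\varepsilon(L+f^*A)\equiv \varepsilon\Gamma \equiv K_X+(\Delta+\varepsilon\Gamma)+\bigl(-(K_X+\Delta)\bigr),
\]
and applies Theorem~\ref{thm:nefness} to the pair $(X,\Delta+\varepsilon\Gamma)$ with the nef class being $-(K_X+\Delta)$, which \emph{is} nef by assumption. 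The $f$-nefness of $\varepsilon\Gamma$ is clear since $\Gamma|_G\sim L|_G=n\tilde L|_G$ is ample. The conclusion is that $\varepsilon(L+f^*A)$ is nef, hence so is $L+f^*A$. In short: the nef hypothesis in Theorem~\ref{thm:nefness} must be carried by $-(K_X+\Delta)$, not by $L$, and the passage from that to nefness of $L$ requires absorbing an effective representative of (a perturbation of) $L$ into the boundary rather than the decomposition $L=(K_X+\Delta+L)+(-(K_X+\Delta))$ you propose.
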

\begin{proof}
Let $A$ be an ample $\mathbb Z_{(p)}$-Cartier divisor on $T$.
It is enough to prove that $L+f^*A$ is nef. 
By the argument in \cite[Proof of Theorem~5.4]{PZ19}, 
there is an effective $\mathbb Z_{(p)}$-Cartier divisor $\Gamma$ on $X$ 
such that $\Gamma\sim_{\mathbb Z_{(p)}} L+f^*A$. 
Take an $\varepsilon\in\mathbb Z_{(p)}$ so that $(X,\Delta+\varepsilon\Gamma)$
is $F$-pure. Since 
$$
\varepsilon(L+f^*A)
\equiv \varepsilon \Gamma
\equiv K_X+(\Delta+\varepsilon\Gamma) +(-(K_X+\Delta))
$$
and $-(K_X+\Delta)$ is nef, we see from Theorem~\ref{thm:nefness} that 
$L+f^*A$ is nef. 
\end{proof}
\begin{thm}[\textup{cf. \cite[Theorem~5.8]{PZ19}}] \label{thm:anti-nef}
In the situation of Notation~\ref{notation:elliptic}, 
for any integer $m\ge 1$, $(f_*\mathcal O_X(mL))^*$ is nef. 
\end{thm}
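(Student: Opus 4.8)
The plan is to follow \cite[proof of Theorem~5.8]{PZ19}: assuming $\big(f_*\mathcal{O}_X(mL)\big)^{*}$ is \emph{not} nef, one produces a rank‑one subsheaf of positive degree inside some finite pullback of $\mathcal{E}_m:=f_*\mathcal{O}_X(mL)$ and contradicts the nefness of $L$ by an intersection computation. First I record the two numerical facts that will be used, both consequences of the choice of $n$ and $m$ in Notation~\ref{notation:elliptic} together with $G\cdot G=0$: one has $L^{d+1}=0$, while $L|_G=n\,\tilde L|_G$ is ample, so $(L|_G)^{d}>0$; moreover $L$ is nef by the previous theorem, and $\mathcal{E}_m$ is a nonzero locally free sheaf on the smooth curve $T$ (since $mL$ is $f$‑ample). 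I will also use the standard description of nefness for a vector bundle on a curve: $\mathcal{E}^{*}$ is nef if and only if, for every finite morphism $\pi\colon C\to T$ from a smooth projective curve and every rank‑one subsheaf $\mathcal{M}\subseteq\pi^{*}\mathcal{E}$, one has $\deg_{C}\mathcal{M}\le 0$ (dualize the description of curves dominating $T$ inside $\mathbb{P}(\mathcal{E}^{*})$); over the elliptic curve $T$ this is exactly what the classification of vector bundles \cite{Oda71} encodes.

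Now suppose $\mathcal{E}_m^{*}$ is not nef. The description above yields a finite $\pi\colon C\to T$ and a rank‑one subsheaf $\mathcal{M}\subseteq\pi^{*}\mathcal{E}_m$ with $a:=\deg_{C}\mathcal{M}>0$ (one may assume $\mathcal{M}$ is a line subbundle and, as in \cite[Theorem~5.8]{PZ19}, that $C$ is an elliptic curve and $\pi$ a separable isogeny). Set $X_{C}:=X\times_{T}C$, with projections $u\colon X_{C}\to X$ and $f_{C}\colon X_{C}\to C$, and put $L_{C}:=u^{*}L$. Since $\pi$ is finite flat, flat base change gives $\pi^{*}\mathcal{E}_m\cong f_{C,*}\mathcal{O}_{X_{C}}(mL_{C})$, while $L_{C}$ is nef with $L_{C}^{\,d+1}=(\deg\pi)\,L^{d+1}=0$ and $L_{C}^{\,d}\cdot G_{C}=(L|_G)^{d}>0$ for a general fibre $G_{C}$ of $f_{C}$ (which maps isomorphically onto a fibre of $f$). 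The inclusion $\mathcal{M}\hookrightarrow f_{C,*}\mathcal{O}_{X_{C}}(mL_{C})$ corresponds by adjunction to a nonzero morphism $f_{C}^{*}\mathcal{M}\to\mathcal{O}_{X_{C}}(mL_{C})$, i.e.\ to a nonzero section of $\mathcal{O}_{X_{C}}(mL_{C}-f_{C}^{*}M)$, writing $\mathcal{M}=\mathcal{O}_{C}(M)$ with $\deg M=a$; hence there is an effective divisor $\Gamma\ge 0$ on $X_{C}$ with $\Gamma\equiv mL_{C}-a\,G_{C}$. Since $X_{C}$ is integral (see below), $L_{C}$ is nef and $\Gamma$ is effective,
\[
0\ \le\ L_{C}^{\,d}\cdot\Gamma\ =\ m\,L_{C}^{\,d+1}-a\,L_{C}^{\,d}\cdot G_{C}\ =\ -\,a\,(L|_G)^{d}\ <\ 0,
\]
a contradiction. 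Therefore $\big(f_*\mathcal{O}_X(mL)\big)^{*}$ is nef.

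The delicate point — and the reason one works, following \cite[Theorem~5.8]{PZ19}, with a minimal base change — is the integrality of $X_{C}$, since $L_{C}^{\,d}\cdot\Gamma\ge 0$ for effective $\Gamma$ may fail on a reducible or non‑reduced scheme. This is arranged by first reducing to the case where $f$ has connected fibres (the situation in all the applications, where $f$ is the Stein factorization of an Albanese‑type morphism onto an elliptic curve): then $K(T)$ is algebraically closed in $K(X)$, so the generic fibre $X_{\eta}$ is geometrically integral over $K(T)$, and hence $X_{C}=X\times_{T}C$ remains integral — it is irreducible because its generic fibre $X_{\eta}\times_{K(T)}K(C)$ is, and reduced because $X\to T$, and therefore $X_{C}\to C$, is flat over the smooth curve $C$, so that every associated prime of $\mathcal{O}_{X_{C}}$ lies over the generic point of $C$. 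Everything else — flat base change, the $f_{C}^{*}\dashv f_{C,*}$ adjunction, and the numerical identity built on $L^{d+1}=0$ and the ampleness of $L|_G$ — is routine.
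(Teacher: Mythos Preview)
Your argument is correct, and after the common opening move (assume $\mathcal E_m^*$ is not nef, pass to a finite cover of $T$, and produce an effective divisor $\Gamma$ numerically equivalent to $mL$ minus a positive multiple of a fibre) it takes a genuinely different and more economical route than the paper. The paper, having found $\Gamma\in|mL-G|$, picks $\varepsilon\in\mathbb Z_{(p)}$ with $(X,\Delta+\varepsilon\Gamma)$ strongly $F$-regular, writes $\varepsilon\Gamma\equiv K_X+(\Delta+\varepsilon\Gamma)+(-(K_X+\Delta))$, and applies Theorem~\ref{thm:nefness} a \emph{second} time to conclude that $\Gamma$ itself is nef; the contradiction then comes from $\Gamma^{d+1}=(mL-G)^{d+1}<0$. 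You instead intersect $\Gamma$ against $L_C^{\,d}$: since $L$ (hence $L_C$) is already nef by the preceding theorem, $L_C^{\,d}\cdot\Gamma\ge 0$, while the direct computation $L_C^{\,d}\cdot\Gamma=mL_C^{\,d+1}-a\,L_C^{\,d}\cdot G_C=-a\,(L|_G)^d<0$ finishes. Thus your proof uses the $F$-singularity hypotheses on $(X,\Delta)$ only indirectly, through the nefness of $L$, and avoids re-invoking the positivity machinery; the paper's detour through Theorem~\ref{thm:nefness} buys nothing extra here. The integrality issue for the base-changed total space that you flag is equally present in the paper's \'etale replacement $X\leadsto X_{T'}$, and both arguments are cleanest under the standing assumption $f_*\mathcal O_X=\mathcal O_T$.
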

\begin{proof}
Suppose that $(f_*\mathcal O_X(mL))^*$ is not nef. 
Then by the classification of vector bundles on an elliptic curve~\cite{Oda71}, 
$f_*\mathcal O_X(mL)$ has an indecomposable vector bundle $\mathcal E$ 
with $\deg \mathcal E >0$ as a direct summand. 
Let $\pi:T'\to T$ be an \'etale cover whose degree is larger than $\mathrm{rank}\,\mathcal E$. 
Replacing $f:X\to T$ by $f_{T'}:X_T'\to T'$, 
we may assume that $\deg \mathcal E > \mathrm{rank}\,\mathcal E$. 
Note that $\pi^*f_*\mathcal O_X(L)\cong (f_{T'})_*\mathcal O_{X_{T'}}(L_{T'})$,
since $\pi$ is flat. 
Take a closed point $t\in T$. 
Then $\deg \mathcal E(-t) =\deg \mathcal E -\mathrm{rank}\,\mathcal E >0$, 
so we have 
\begin{align*}
0 \ne H^0(T, \mathcal E(-t)) 
& \subseteq H^0\big(T, (f_*\mathcal O_X(mL)) (-t) \big) 
\\ & = H^0(T, f_*\mathcal O_X(mL-G)) 
= H^0(X, \mathcal O_X(mL-G)),  
\end{align*}
where $G$ is a fiber of $f$ over $t$. 
Hence, we can find a $\Gamma\in |mL-G|$. 
Take $\varepsilon \in \mathbb Z_{(p)}$ so that 
$(X,\Delta+\varepsilon\Gamma)$ is strongly $F$-regular. 
Since $\Gamma$ is $f$-ample and 
$$
\varepsilon \Gamma 
\equiv K_X+(\Delta+\varepsilon\Gamma) + (-(K_X+\Delta)), 
$$
we can apply Theorem~\ref{thm:nefness} and obtain that $\Gamma$ is nef. 
However, we have 
$$
\Gamma^{d+1}
=(mL-G)^{d+1}
=-m^dL^d\cdot G <0,
$$
which is a contradiction. Thus, $(f_*\mathcal O_X(mL))^*$ is nef. 
\end{proof}
Recall that a vector bundle $\mathcal E$ is called numerically flat 
if $\mathcal E$ and $\mathcal E^*$ is nef. 
\begin{thm}[\textup{cf. \cite[Theorem~5.10]{PZ19}}] \label{thm:num flat}
In the situation of Notation~\ref{notation:elliptic}, 
there exists an integer $m_0\ge 1$ such that 
$f_*\mathcal O_X(mL)$ is numerically flat for each $m\ge m_0$. 
\end{thm}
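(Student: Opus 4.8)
The plan is to combine the two nefness statements already available in this section: Theorem~\ref{thm:anti-nef}, which gives that $(f_*\mathcal O_X(mL))^*$ is nef for every $m\ge 1$, with a companion statement that $f_*\mathcal O_X(mL)$ itself is nef for $m$ large. Together these two facts say exactly that $f_*\mathcal O_X(mL)$ is numerically flat. So the real content is to produce the dual statement.

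First I would recall that $L$ is nef by the earlier theorem in this section (the one citing \cite[Theorem~5.4]{PZ19}), hence $mL$ is nef for all $m$. I would then invoke the positivity machinery of \cite{Eji19d}, exactly in the form packaged as Proposition~\ref{prop:genglgen}: since $(X,\Delta)$ is strongly $F$-regular and $-(K_X+\Delta)$ is a nef $\mathbb Z_{(p)}$-Cartier divisor, for any ample $\mathbb Z_{(p)}$-Cartier $A$ on $T$ one can choose, via \cite[Corollary~6.10]{SW13}, an effective $\Gamma\sim_{\mathbb Z_{(p)}} mL+f^*A$ with $(X,\Delta+\varepsilon\Gamma)$ $F$-pure for small $\varepsilon\in\mathbb Z_{(p)}$; then $m(K_U+\Delta')$ with $\Delta':=\Delta+\varepsilon\Gamma$ is, up to scaling, linearly equivalent to a multiple of $mL+f^*A$ plus an $f$-ample piece, and Proposition~\ref{prop:genglgen}.\eqref{itm:genglgen:generated} shows $\big(f_*\mathcal O_X(m'(mL+f^*A))\big)\otimes\mathcal O_T(H)$ is generically globally generated for $m'\gg0$. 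On the elliptic curve $T$ generic global generation after twisting by a fixed ample bundle, for all large powers of a fixed bundle, forces that bundle to be nef: a bundle on a curve with a quotient of negative degree would, after taking symmetric powers, have a quotient of arbitrarily negative degree, which cannot be killed by the fixed twist $H$ once the power is large. Letting $\deg A\to 0$ (or using that nefness is a closed condition) then yields that $f_*\mathcal O_X(mL)$ is nef for $m\gg0$; fix such an $m_0$.

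The main obstacle I anticipate is the precise bookkeeping needed to feed the situation into Proposition~\ref{prop:genglgen}: one must arrange that the base $T$ (an elliptic curve, hence an abelian variety, so certainly admitting the required generically finite map to an open of an abelian variety) and the divisor $mL+f^*A$ fit the hypotheses—in particular that the relevant $\mathbb Z_{(p)}$-divisor restricted to the generic fiber is ample Cartier, which holds because $L|_G$ is ample by the choice $n=(d+1)L^d\cdot G$ in Notation~\ref{notation:elliptic} and the general-fiber computation in the proof of the $L$-nefness theorem. Once this is in place, deducing nefness of the sheaf from generic global generation of all large twisted powers is a standard argument on curves, and combining with Theorem~\ref{thm:anti-nef} finishes the proof: for $m\ge m_0$ both $f_*\mathcal O_X(mL)$ and its dual are nef, i.e.\ $f_*\mathcal O_X(mL)$ is numerically flat.
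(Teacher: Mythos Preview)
Your overall plan matches the paper: numerical flatness is nefness of both $\mathcal E:=f_*\mathcal O_X(mL)$ and its dual, and Theorem~\ref{thm:anti-nef} already gives the dual half, so the content is the nefness of $\mathcal E$ itself.

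The gap is in your nefness argument. The sheaves $f_*\mathcal O_X(m'(mL+f^*A))$ for varying $m'$ are \emph{not} symmetric powers of a fixed bundle; there is only a surjection $S^{m'}\big(f_*\mathcal O_X(mL+f^*A)\big)\twoheadrightarrow f_*\mathcal O_X(m'(mL+f^*A))$, and it points the wrong way for your purpose. So generic global generation of $f_*\mathcal O_X(m'(mL+f^*A))\otimes\mathcal O_T(H)$ for all large $m'$ says nothing about a hypothetical negative-degree summand of $f_*\mathcal O_X(mL+f^*A)$: there is no mechanism producing quotients of degree tending to $-\infty$. Your fallback ``let $\deg A\to 0$'' also fails, because the threshold in Proposition~\ref{prop:genglgen} depends on the auxiliary pair and hence on $A$, and ``nefness is a closed condition'' cannot be invoked when the bundles for different $A$ do not sit in a common family containing $f_*\mathcal O_X(mL)$.

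The paper repairs exactly this point by replacing ``take powers'' with ``take \'etale isogenies of the base''. For $n$ coprime to $p$ let $\pi:T\to T$ be multiplication by $n$. If $\mathcal E$ is not nef, Oda's classification gives a summand $\mathcal G$ of negative degree, and $\deg(\pi^{l,*}\mathcal G)=n^l\deg\mathcal G\to-\infty$. On the other hand, after \'etale base change $mL_l-(K_{X_l}+\Delta_l)$ remains nef and $f_l$-ample, and \cite[Theorem~1.2]{Eji22b} yields that $\pi^{l,*}\mathcal E\otimes\mathcal A^2$ is globally generated for a \emph{fixed} ample $\mathcal A$ on $T$, independent of~$l$; this contradicts the summand $\pi^{l,*}\mathcal G\otimes\mathcal A^2$ having negative degree once $l\gg0$. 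The threshold $m_0$ enters to guarantee $S^0f_*(\sigma(X,\Delta)\otimes\mathcal O_X(mL))=f_*\mathcal O_X(mL)$, which is stable under \'etale base change and is precisely the sheaf to which \cite{Eji22b} applies; this use of $F$-purity is the second ingredient missing from your sketch.
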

\begin{proof}
Since $(X,\Delta)$ is $F$-pure and $L$ is $f$-ample, 
there is an integer $m_0\ge 1$ such that 
$$
S^0f_*(\sigma(X,\Delta)\otimes\mathcal O_X(mL)) 
=f_*\mathcal O_X(mL)
$$
for each $m\ge m_0$ 
(for the definition of $S^0f_*(\sigma(X,\Delta)\otimes\mathcal O_X(mL))$, 
see \cite[Definition~2.14]{HX15}). 
Fix an integer $m\ge m_0$. Put $\mathcal E:=f_*\mathcal O_X(mL).$
We prove that $\mathcal E$ is nef. 
Let $n\ge 1$ be an integer with $p\nmid n$. 
Let $\pi:T\to T$ be the morphism that sends $t$ to $nt$. 
For each integer $l\ge 1$, 
let $f_l:X_l\to T$ denote the morphism obtained by the base-change 
of $f:X\to T$ by $\pi^l:T\to T$. 
Since $\pi$ is \'etale, we can easily check that 
\begin{align*}
{\pi^l}^*\mathcal E
& = {\pi^l}^*S^0f_*(\sigma(X,\Delta)\otimes \mathcal O_X(mL))
\\ & \cong S^0{f_l}_*(\sigma(X_l,\Delta_l)\otimes \mathcal O_{X_l}(mL_l)), 
\end{align*}
where $\Delta_l$ and $L_l$ are the pullback of $\Delta$ and $L$ to $X_l$, 
respectively. 
Let $\mathcal A$ be an ample line bundle on $T$ with $|\mathcal A|$ free. 
Since $mL_l -(K_{X_l}+\Delta_l)$ is nef and $f_l$-ample, 
where $\Delta_l$ is the pullback of $\Delta$ to $X_l$, 
we see from \cite[Theorem~1.2]{Eji22b} that 
$
{\pi^l}^*\mathcal E \otimes \mathcal A^2
$
is globally generated. 
Suppose that $\mathcal E$ is not nef. 
Then by the classification of vector bundles on an elliptic curve, 
$\mathcal E$ has a direct summand $\mathcal G$ of negative degree. 
Then the degree of ${\pi^l}^*\mathcal G \otimes \mathcal A^2$ 
is negative if $l\gg0$,
which contradicts the global generation of 
${\pi^l}^*\mathcal E \otimes \mathcal A^2$. 
\end{proof}

\begin{prop}
\label{prop:trivialization_F_p}
In the situation of Notation \ref{notation:elliptic}, assume also that $k= \overline{\mathbb F}_p$. Then, there exists a base-change $T' \to T$ from an elliptic curve $T'$ and a polarized isomorphism of pairs over $T'$: 
\begin{equation*}
(X, \Delta; L) \times_T T' \cong (F, \Delta|_F; L|_F) \times_k T',
\end{equation*}
 where $F$ is any fiber of $f$. Note that the divisors in this isomorphism, except $\Delta$ before the base-change, are all $\mathbb Z_p$-AC  {\large(}$\mathbb Z_p$-almost Cartier{\large)} divisors in the sense of \cite{Har94}.
\end{prop}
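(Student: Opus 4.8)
The plan is to reconstruct $(X,\Delta;L)$ from the sheaf of graded algebras $\mathcal R:=\bigoplus_{m\ge 0}f_*\mathcal O_X(mL)$ and to show that a suitable finite base change makes $\mathcal R$ constant. First note that $L$ is $f$-ample, since $\tilde L$ is $f$-ample and $L=n\tilde L-mG$ differs from $n\tilde L$ only by the $f$-pullback $-mf^*(\mathrm{pt})$; hence $X\cong\mathrm{Proj}_T\mathcal R$ and, after replacing $\mathcal R$ by a Veronese subalgebra $\mathcal R^{(d)}$ (generated in degree one, $\mathcal O(1)=\mathcal O_X(dL)$), the polarized pair is encoded by $\mathcal R^{(d)}$ together with an auxiliary sheaf recording $\Delta$ (built from $f_*$ of $\mathcal O_X\big(m(dL)-m\lceil(q-1)\Delta\rceil\big)$-type modules, where $q=p^s$ clears the denominators of $\Delta$).

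The input from the earlier results is numerical flatness: Theorem~\ref{thm:num flat} gives that $\mathcal R_m$ is numerically flat for $m\gg 0$, and the same $S^0$-argument, using strong $F$-regularity of $(X,\Delta)$ and the nefness of $-(K_X+\Delta)$, gives the same for the $\Delta$-auxiliary sheaves. The crux is then the following fact over $k=\overline{\mathbb F}_p$: \emph{every numerically flat vector bundle $\mathcal E$ on the elliptic curve $T$ becomes trivial after pullback along a suitable isogeny $[N]\colon T\to T$.} Indeed, by Oda's classification \cite{Oda71}, numerical flatness forces $\mathcal E\cong\bigoplus_i(F_{r_i}\otimes\mathcal L_i)$ with $F_{r_i}$ an Atiyah bundle and $\mathcal L_i\in\mathrm{Pic}^0(T)$; over $\overline{\mathbb F}_p$ each $\mathcal L_i$ is torsion, so a prime-to-$p$ isogeny reduces to $\mathcal E\cong\bigoplus F_{r_i}$, and since $[p]^*$ acts as multiplication by $p$, hence as zero, on $H^1(T,\mathcal O_T)=\mathrm{Lie}\,\mathrm{Pic}^0(T)$, a bounded number of applications of $[p]^*$ peels off the unipotent extensions defining each $F_{r_i}$ and trivializes it. Applying this simultaneously to the finitely many algebra generators of $\mathcal R^{(d)}$ and to the $\Delta$-auxiliary sheaves yields one finite morphism $T'\to T$ from an elliptic curve over which all of them are trivial; since a globally generated numerically flat bundle on $T'$ is automatically trivial, every graded piece of the pullback of $\mathcal R^{(d)}$ is then trivial as well.

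A homomorphism of trivial bundles on the proper connected $T'$ is a constant matrix, so the pulled-back algebra is $S\otimes_k\mathcal O_{T'}$ for a constant graded $k$-algebra $S$; taking $\mathrm{Proj}$ over $T'$ gives $X\times_T T'\cong\mathrm{Proj}_k(S)\times_k T'=:F\times_k T'$ with $\mathcal O(1)$ matching $\mathrm{pr}_1^*(dL|_F)$, and one more isogeny absorbs the resulting $d$-torsion so that $L\times_TT'\sim\mathrm{pr}_1^*(L|_F)$; the triviality of the $\Delta$-auxiliary sheaf then forces $\Delta\times_TT'=\mathrm{pr}_1^*(\Delta|_F)$ (in particular the vertical part of $\Delta$ vanishes, all fibres of $f$ being now isomorphic). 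This yields the polarized isomorphism of pairs and the isotriviality of the $(F,\Delta|_F)$.

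I expect the boundary to be the main obstacle: one must choose the $\Delta$-auxiliary sheaves so that their numerical flatness genuinely comes out of strong $F$-regularity, handle the inseparable base changes (the $[p]^*$'s, which would destroy the singularities if taken naively) by treating $\Delta$ as a $\mathbb Z_p$-almost-Cartier divisor in the sense of \cite{Har94} so that its pullback is well defined and the relations among relative canonical divisors remain controlled, and be careful in passing from numerical to $\mathbb Q$-linear equivalence when recognising $\Delta\times_TT'$ as a pullback. The variety-plus-polarization part, by contrast, is essentially formal once the trivialization lemma is in hand.
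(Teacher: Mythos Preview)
Your plan for the polarized variety part is essentially the paper's own argument: use numerical flatness (Theorem~\ref{thm:num flat}), Oda's classification, and the torsion of $\mathrm{Pic}^0$ over $\overline{\mathbb F}_p$ to trivialize $f_*\mathcal O_X(L)$ after a finite base change, then propagate to all $f_*\mathcal O_X(mL)$ via the multiplication map (a globally generated numerically flat bundle on $T'$ is trivial), and read off the product structure from $\mathrm{Proj}$ of the section ring. Your explicit mechanism for killing the Atiyah summands by iterated $[p]^*$ is correct and spells out what the paper leaves inside the citation of \cite{Oda71}.

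Where your route genuinely differs is in the treatment of $\Delta$, and here the paper is considerably simpler. You propose to encode $\Delta$ in auxiliary pushforward sheaves and argue separately for their numerical flatness via an $S^0$-type argument; as you yourself flag, this is the delicate point, and it is not clear that the needed two-sided bounds on such sheaves actually follow from the available positivity. The paper sidesteps all of this with an elementary geometric lemma (Lemma~\ref{lem:product divisor}): once $X_{T'}\cong F\times_k T'$ is known, one has $K_{X_{T'}/T'}\sim\mathrm{pr}_1^*K_F$, so the nefness of $-(K_{X_{T'}/T'}+\Delta_{T'})$, inherited by flat base change from that of $-(K_X+\Delta)$, makes $-\Delta_{T'}$ nef relative to $\mathrm{pr}_1:F\times_k T'\to F$. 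But an effective divisor whose negative is nef on every curve contracted by $\mathrm{pr}_1$ must be supported on a union of fibers of $\mathrm{pr}_1$, hence equals $\mathrm{pr}_1^*\big(\Delta_{T'}|_{F\times\{t\}}\big)$. No extra positivity input, no $S^0$-machinery, and no separate bookkeeping of how inseparable covers interact with sheaves recording $\Delta$: the pullback of $\Delta$ as a $\mathbb Z_{(p)}$-almost-Cartier divisor is well defined simply because $T'\to T$ is flat and the fibers of $f$ are $S_2$ and $G_1$.
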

\begin{proof}
Let $L$ be as in Notation~\ref{notation:elliptic}. 
Then $f_*\mathcal O_X(mL)$ is numerically flat for each $m\gg0$. 
We may assume that so is $f_*\mathcal O_X(L)$, replacing $L$ by $mL$. 
By the classification of vector bundles on an elliptic curve~\cite{Oda71}, 
we see that 
$$
f_*\mathcal O_X(L)
\cong \bigoplus_{i=1}^n \mathcal E_{r_i,0} \otimes \mathcal L_i, 
$$
where each $\mathcal E_{r_i,0}$ is an irreducible vector bundle of rank $r_i$ with 
non-zero global sections 
and each $\mathcal L_i$ is a line bundle of degree zero. 
Since $k=\overline{\mathbb F_p}$, each $\mathcal L_i$ is a torsion line bundle, 
so there is a finite morphism $\pi:T'\to T$ from an elliptic curve $T'$ such that 
$$
{f_{T'}}_* \mathcal O_{X_{T'}}(L_{T'})
\cong \pi^*f_*\mathcal O_X(L) 
\cong \bigoplus \mathcal O_{T'}.
$$  
Then the multiplication map 
$$
\bigoplus \mathcal O_{T'}
\cong S^m({f_{T'}}_*\mathcal O_{X_{T'}}(L_{T'}))
\twoheadrightarrow {f_{T'}}_*\mathcal O_{X_{T'}}(mL_{T'})
$$
shows that ${f_{T'}}_*\mathcal O_{X_{T'}}(mL_{T'})$ is globally generated. 
Combining this with the numerical flatness of 
${f_{T'}}_*\mathcal O_{X_{T'}}(mL_{T'})$, 
one can easily see that it is also a trivial vector bundle. 
Hence, the relative section ring 
$\bigoplus_{m\ge 0} {f_{T'}}_*\mathcal O_{X_{T'}}(mL)$
comes from $\mathrm{Spec}\,k$, 
which means that the morphism $f_{T'}:X_{T'}\to T'$ splits, 
i.e., $X_{T'} \cong G\times_k T'$, where $G$ is a fiber of $f$. 
Let $\Delta_{T'}$ be the pullback of $\Delta$ to $G\times_k T'$. 
Note that since $G$ satisfies $S_2$ and $G_1$ and 
$\pi_X:G\times_k T' \to X$ is flat, 
we can define $\Delta_{T'}$ as an effective $\mathbb Z_{(p)}$-AC divisor. 
We show that $\Delta_{T'} = \mathrm{pr}_1^*(\Delta|_G)$. 
Since 
$$
\pi_X^* K_X
= \pi_X^* K_{X/T}
\sim K_{G\times_k T'/T'}, 
$$
we have that $-\pi_X^*(K_X+\Delta) \sim -(K_{G\times_k T'/T'} +\Delta_{T'})$
is nef, so the claim follows from Lemma~\ref{lem:product divisor}.
\end{proof}
\begin{lem}[\textup{cf. \cite[Lemma~8.4]{PZ19}}] \label{lem:product divisor}
Let $G$ be a projective variety satisfying $S_2$ and $G_1$. 
Let $T$ be a normal projective variety. 
Let $\Delta$ be an effective $\mathbb Z_{(p)}$-AC divisor on $G\times_k T$. 
Suppose that $-(K_{G\times_k T/T} +\Delta)$ is a nef $\mathbb Z_{(p)}$-Cartier. 
Then $\Delta=\mathrm{pr}_1^*(\Delta|_{G\times_k t})$ for a closed point $t\in T$. 
\end{lem}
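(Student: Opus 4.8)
The plan is to adapt the proof of \cite[Lemma~8.4]{PZ19}: one tests the nef divisor $-(K_{G\times_k T/T}+\Delta)$ against the curves $\{g\}\times\ell$, where $g\in G$ is a general closed point and $\ell\subseteq T$ is a general complete intersection curve cut out by very ample divisors of large degree (when $\dim T=1$ one simply takes $\ell=T$). Write $X:=G\times_k T$, with projections $\mathrm{pr}_1:X\to G$ and $\mathrm{pr}_2:X\to T$. Since $G$ is $S_2$ and $G_1$ and $T$ is normal, the product formula for dualizing sheaves gives $K_{X/T}=\mathrm{pr}_1^*K_G$ as $\mathbb Z_{(p)}$-almost Cartier divisor classes, so the hypothesis says that $-\mathrm{pr}_1^*K_G-\Delta$ is nef.

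First I would show that $\Delta\cdot(\{g\}\times\ell)=0$ for general $g$ and $\ell$. Indeed $\mathrm{pr}_1$ contracts $\{g\}\times\ell$ to the point $g$, so $\mathrm{pr}_1^*K_G\cdot(\{g\}\times\ell)=0$, and nefness gives $\Delta\cdot(\{g\}\times\ell)\le0$. On the other hand, for general $g$ and $\ell$ the curve $\{g\}\times\ell$ is not contained in $\mathrm{Supp}(\Delta)$ --- otherwise some prime component of $\Delta$ would contain $\{g\}\times T$ for $g$ ranging over a dense subset of $G$, hence would be all of $X$ --- so $\Delta\ge0$ forces $\Delta\cdot(\{g\}\times\ell)\ge0$.

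Next I would run the component analysis. Let $D$ be a prime component of $\Delta$. If $\mathrm{pr}_2(D)\subsetneq T$, then comparing dimensions one gets $D=G\times_k Z$ for a prime divisor $Z\subseteq T$, whence $D\cdot(\{g\}\times\ell)=Z\cdot\ell>0$ because $\ell$ is a general complete intersection curve of large degree; as every summand of $\Delta$ meets $\{g\}\times\ell$ non-negatively, this contradicts $\Delta\cdot(\{g\}\times\ell)=0$. Hence every component dominates $T$. If moreover $\mathrm{pr}_1(D)$ is dense in $G$, then for general $g$ the scheme $D\cap(\{g\}\times T)$ is a nonzero effective divisor $Z_g\subsetneq\{g\}\times T\cong T$ (it cannot be all of $\{g\}\times T$ for general $g$, else $D=X$), so again $D\cdot(\{g\}\times\ell)=Z_g\cdot\ell>0$, a contradiction. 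Therefore $W:=\overline{\mathrm{pr}_1(D)}$ is a proper closed subset of $G$; a dimension count ($\dim D=\dim X-1$ and $D\subseteq W\times_k T$) gives $\dim W\ge\dim G-1$, so $W$ is a prime divisor and, both $D$ and $W\times_k T$ being irreducible divisors on $X$, $D=W\times_k T=\mathrm{pr}_1^*W$ (note $\mathrm{pr}_1$ is flat, so $\mathrm{pr}_1^*W$ is reduced). Writing $\Delta=\sum_j b_j D_j$, this yields $\Delta=\mathrm{pr}_1^*\bigl(\sum_j b_j W_j\bigr)$; restricting to $G\times_k t$ for a general closed point $t$ gives $\Delta|_{G\times_k t}=\sum_j b_j W_j$, and hence $\Delta=\mathrm{pr}_1^*(\Delta|_{G\times_k t})$, which is moreover independent of $t$.

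The main obstacle is essentially bookkeeping: since $X$ may fail to be normal (it is only $S_2$ and $G_1$), all the divisor-theoretic operations above --- the product formula $K_{X/T}=\mathrm{pr}_1^*K_G$, pullback along the flat projection $\mathrm{pr}_1$, restriction to a general fibre $G\times_k t$, and intersection with a curve avoiding the codimension $\ge2$ non-Cartier locus of $\Delta$ --- must be carried out in the category of $\mathbb Z_{(p)}$-almost Cartier divisors in the sense of \cite{Har94}, and one must fix a very ample polarization on $T$ and invoke Bertini so that the general $\{g\}\times\ell$ is irreducible, meets each prime divisor of $T$ positively, and avoids the non-Cartier locus. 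All of this is exactly as in \cite{PZ19}, to which I would refer for the remaining details.
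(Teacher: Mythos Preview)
Your argument is correct and follows the same underlying idea as the paper's proof---test the nef divisor against curves contracted by $\mathrm{pr}_1$---but the paper's execution is considerably shorter. Instead of fixing a general point $g$, a general complete intersection curve $\ell$, and then running a three-case analysis on the prime components of $\Delta$, the paper first shrinks $G$ to its Gorenstein locus (relaxing nefness to $\mathrm{pr}_1$-nefness), so that $K_{X/T}=\mathrm{pr}_1^*K_G$ is honestly Cartier and $\mathrm{pr}_1$-numerically trivial. Then the hypothesis immediately gives that $-\Delta$ itself is $\mathrm{pr}_1$-nef. Combined with $\Delta\ge 0$, this means that \emph{every} projective curve $C$ contracted by $\mathrm{pr}_1$ that meets $\mathrm{Supp}(\Delta)$ must lie inside it; since the fibres of $\mathrm{pr}_1$ are copies of the connected projective variety $T$, one concludes $\mathrm{pr}_1^{-1}(\mathrm{pr}_1(\mathrm{Supp}(\Delta)))=\mathrm{Supp}(\Delta)$ directly, and the identification of coefficients follows. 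Your route via general $\{g\}\times\ell$ and component-by-component dimension counts reaches the same conclusion but with more moving parts; the paper's reduction to the Gorenstein locus is what lets it bypass the Bertini and almost-Cartier bookkeeping you flag at the end.
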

\begin{proof}
Let $h:G\times_k T \to G$ denote the first projection $\mathrm{pr}_1$.
By relaxing the projectivity of $G$ to quasi-projectivity, and by replacing the nefness assumption by $h$-nefness, we may shrink $G$ to its Gorenstein locus. With other words, we may assume that $G$ is Gorenstein.
Then $K_{G\times_k T/T} \sim h^* K_G$ is a Cartier divisor and it is $h$-nef. 
By the assumption, $-h^*K_G -\Delta$ is an $h$-nef $\mathbb Z_{(p)}$-Cartier divisor, 
and hence so is $-\Delta$. 
Therefore, for any projective curve $C\subseteq G\times_k T$ contracted by $h$, 
if $C$ meets $\mathrm{Supp}(\Delta)$, then $C\subseteq\mathrm{Supp}(\Delta)$.
This means that $h^{-1}(h(\mathrm{Supp}(\Delta))) =\mathrm{Supp}(\Delta)$. 
Thus the claim holds. 
\end{proof} 
\begin{proof}[Proof of Theorem~\ref{thm:isotrivial}]
Since $a(X)$ is a curve, the first part of the Stein factorization of $a:X\to A$ is separable, so Theorem~\ref{thm:main} implies that $a:X\to A$ forms an algebraic fiber space and $\dim A=1$. 
Put $T:=A$. 
Case~\eqref{itm:isotrivial:finite_field} is shown in Proposition~\ref{prop:trivialization_F_p}. Hence we only need to show that all fibers are isomorphic, and we may assume that $-(K_X + \Delta)$ is semi-ample. Choose a point $Q \in T$, a finitely generated $\mathbb F_p$ algebra $A \subseteq k$ and models
\begin{equation*}
\xymatrix{
(X_A, \Delta_A) \ar[r]^{f_A} & T_A  \ar[r] & \Spec A \ar@/_1pc/[l]_{\sigma_A}
}
\end{equation*}  
of $f : X \to T \ni Q$ over $A$. That is, $(f_A)_k=f$, $(\Delta_A)_k=\Delta$ and $(\sigma_A)_k$ corresponds to $Q$. We also assume that  there is also a very ample line bundle $L_A $ on $X_A$ such that $(L_A)_k =L$, and also that
\begin{itemize}
\item  $f_A$ is flat,
\item $\Spec A$ is regular
\item all fibers of $f_A$ and hence also $X_A$ is normal,
\item  $K_{X_A} +\Delta_A$ is $\mathbb Z_p$-Cartier, 
\item $(F, \Delta_F)$ is strongly $F$-regular for all fibers $F$ of $f_A$, where $\Delta_F = \Delta_A|_F$ (this restriction is meaningful as all fibers are normal), and
\item $-K_F - \Delta_F$ is semi-ample for all fibers $F$ of $f_A$.
\end{itemize}
As the above are all open properties that hold at the generic fiber (as they hold over $k$), we are able to achieve such a model. Call $Q_A$ the image of $\sigma_A$, which is a section of $T_A \to \Spec A$, and set $F_A = (f_A)^{-1} Q_A$. We denote then by $\Delta_{F_A}$ and $L_{F_A}$ the respective restrictions on $F_A$. Consider now the following $\Isom$ scheme \cite[Section 7, Prop 7.8]{PZ19},
\begin{equation*}
I= \Isom_{T_A} \Big( \big(F_A, \Delta_{F_A}; L_{F_A}\big)  \times_{\sigma, A} T_A,  \big(X_A, \Delta_A; L_A\big) \Big).
\end{equation*}
By Proposition \ref{prop:trivialization_F_p} and by the functoriality of $\Isom$ \cite[Prop 7.8.(1)]{PZ19}, $I$ surjects on $T_R$ for any $R \in T_A\big(\overline{\mathbb F}_p \big)$. Hence it is surjective in general. Using again the functoriality of $\Isom$ \cite[Prop 7.8.(1)]{PZ19}, we obtain the statement of the theorem. 
\end{proof}
\section{Case when anti-canonical divisor is ample} \label{section:ample}
\begin{proof}[The proof of Theorem \ref{thm:ample}]
We first prove case~\eqref{itm:ample:ample}. 
Let $X\xrightarrow{f} Y\to A$ be the Stein factorization 
of the Albanese morphism $a:X\to A$ of $X$. 
Let $L$ be an ample $\mathbb Z_{(p)}$-Cartier divisor on $Y$ such that 
$-(K_X+\Delta) -f^*L$ is ample. Then by \cite[Corollary~6.10]{SW13}, 
there is an effective $\mathbb Z_{(p)}$-Cartier divisor 
$D\sim_{\mathbb Z_{(p)}} -(K_X+\Delta) -f^*L$ such that 
$(X,\Delta+D)$ is $F$-pure. 
Then by Proposition~\ref{prop:genglgen}, we see that 
$$
f_* \mathcal O_X(m(K_X+\Delta+D)) \otimes \mathcal O_Y(H)
\cong 
f_* \mathcal O_X(-mf^*L) \otimes \mathcal O_Y(H)
\cong 
\mathcal O_Y(H-mL)
$$
is generically globally generated for each $m$ large and divisible enough, 
where $H$ was chosen as in Proposition~\ref{prop:genglgen}.
Therefore, $-L$ is pseudo-effective, which means that $\dim Y=0$, 
and so $\dim A=0$.

Next, we show case~\eqref{itm:ample:nef_big}. In this case, we can find 
an effective $\mathbb Z_{(p)}$-Cartier divisor on $X$ such that 
$(X,\Delta+E)$ is $F$-pure and that $-(K_X+\Delta+E)$ is ample. 
Hence, the assertion follows from case~\eqref{itm:ample:ample}.
\end{proof}

\bibliographystyle{alpha}
\bibliography{ejirizsolt12.bib}
\end{document}